\newtheorem{defi}{Definition}[section]
\newtheorem{example}[defi]{Example}
\newtheorem{teo}[defi]{Theorem}
\newtheorem{coro}[defi]{Corollary}
\newtheorem{pro}[defi]{Proposition}
\newtheorem{rmk}[defi]{Remark}
\newcommand{\GL}{\operatorname{GL}}
\newcommand{\End}{\operatorname{End}}
\newcommand{\Perm}{\operatorname{Perm}}
\newcommand{\Q}{\mathbb{Q}_3}
\title{Induced Hopf Galois Structures and their Local Hopf Galois Modules}
\author{Daniel Gil-Mu\~noz, Anna Rio}
\date{\vspace{-5ex}}
\begin{document}

\newcommand{\Addresses}{% additional braces for segregating \footnotesize
\bigskip
\footnotesize

D. Gil-Muñoz, \textsc{Departament de Matem\`atiques, Universitat Polit\`ecnica de Catalunya, Edifici Omega, Jordi Girona, 1-3, 08034 Barcelona}\par\nopagebreak
\textit{E-mail address}: \texttt{daniel.gil.munoz@upc.edu}

\medskip

A. Rio, \textsc{Departament de Matem\`atiques, Universitat Polit\`ecnica de Catalunya, Edifici Omega, Jordi Girona, 1-3, 08034 Barcelona}\par\nopagebreak
\textit{E-mail address}, \texttt{ana.rio@upc.edu}

}

\maketitle

\begin{abstract}
The regular subgroup determining an induced Hopf Galois structure for a Galois extension $L/K$ is obtained as the direct product of the corresponding regular groups of the inducing subextensions. We describe here the associated Hopf algebra and Hopf action of an induced structure and we prove that they are obtained by tensoring the corresponding inducing objects. In order to deal with their associated orders we develop a general method to compute bases and free generators in terms of matrices coming from representation theory of Hopf modules. In the case of an induced Hopf Galois structure it allows us to decompose the associated order, assuming that inducing subextensions are arithmetically disjoint.
\end{abstract}

\tableofcontents

\section{Introduction}

A finite extension of fields $L/K$ is said to be Hopf Galois if there is a $K$-Hopf algebra $H$ and a $K$-linear action of $H$ in $L$ which endows $L$ with an $H$-module structure $H\to \End_K(L)$, such that the induced map $j\colon L\otimes_K H\longrightarrow\mathrm{End}_K(L)$ is bijective. In that case, the pair formed by the Hopf algebra and the Hopf action is said to be a Hopf Galois structure of $L/K$. For simplicity, we may say that $L/K$ is $H$-Galois. This notion was introduced by Chase and Sweedler in their book \cite{chasesweedler} and it generalizes the notion of Galois extension of fields since the group algebra of the Galois group together with the Galois action on $L$ provides a Hopf Galois structure. 

Although Hopf Galois structures are difficult to compute in general, the ones of separable extensions can be labeled by objects of group theory thanks to a theorem of Greither and Pareigis. This result was introduced in their paper \cite{greitherpareigis}. If $L/K$ is a separable extension, let $\widetilde{L}$ be its normal closure, $G=\mathrm{Gal}(\widetilde{L}/K)$, $G'=\mathrm{Gal}(\widetilde{L}/L)$ and $X=G/G'$. Let $\lambda \colon G\longrightarrow\mathrm{Perm}(X)$ be the left translation map, namely the embedding given by the group action of $G$ on $X$ by left translation.

\begin{teo}[Greither-Pareigis] Hopf Galois structures of $L/K$ are in one-to-one correspondence with regular (i.e, simply transitive) subgroups of $\mathrm{Perm}(X)$ normalized by $\lambda(G)$. Moreover, if $N$ is some such subgroup, the corresponding Hopf Galois structure is given by the $K$-Hopf algebra $\widetilde{L}[N]^G$ and its action on $L$ defined by $$\left(\sum_{i=1}^rc_in_i\right)\cdot x=\sum_{i=1}^rc_in_i^{-1}(\,\overline{1}_G\,)(x)$$
\end{teo}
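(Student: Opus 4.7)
The plan is to apply Galois descent from the base $\widetilde{L}$ down to $K$. Since $\widetilde{L}/K$ is Galois with group $G$, the first step is to analyse the base change $\widetilde{L}\otimes_K L$: using that $L$ corresponds to the transitive $G$-set $X=G/G'$, one establishes the canonical $\widetilde{L}$-algebra isomorphism $\widetilde{L}\otimes_K L \xrightarrow{\sim} \operatorname{Map}(X,\widetilde{L})$ sending $a\otimes x$ to the function $\sigma G'\mapsto a\cdot\sigma(x)$. This reduces the classification of Hopf Galois structures on $L/K$ to that of $G$-equivariant Hopf Galois structures on the split $\widetilde{L}$-algebra $\operatorname{Map}(X,\widetilde{L})$, where the $G$-action combines the natural action on $\widetilde{L}$ with the left translation action of $G$ on $X$ through $\lambda$.

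The next step is the classification over $\widetilde{L}$. On the split algebra $\widetilde{L}^X$, the Hopf Galois structures correspond bijectively to regular subgroups $N\subseteq\Perm(X)$: given such an $N$, the group algebra $\widetilde{L}[N]$ acts on $\widetilde{L}^X$ by $(n\cdot f)(x)=f(n^{-1}(x))$, and bijectivity of the induced map $\widetilde{L}^X\otimes_{\widetilde{L}}\widetilde{L}[N]\to\End_{\widetilde{L}}(\widetilde{L}^X)$ is equivalent to $N$ acting regularly on $X$. Conversely, every Hopf Galois structure on the split algebra arises this way, as follows by combining the structure theory of cocommutative pointed Hopf algebras of finite rank with the identification of their group-like elements as permutations of the primitive idempotent basis of $\widetilde{L}^X$.

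To descend back to $K$, one takes $G$-invariants. The $G$-action transports to an action on $\widetilde{L}[N]$ which is natural on coefficients and, on $N\subseteq\Perm(X)$, is conjugation by $\lambda$, i.e.\ $\sigma\cdot n = \lambda(\sigma)\,n\,\lambda(\sigma)^{-1}$. Hence $\widetilde{L}[N]^G$ is a $K$-Hopf subalgebra carrying a descended Hopf action precisely when $N$ is stable under this conjugation, i.e.\ when $\lambda(G)$ normalizes $N$ in $\Perm(X)$. Tracing the isomorphism $\widetilde{L}\otimes_K L\cong\widetilde{L}^X$ backwards and evaluating a function at the distinguished coset $\overline{1}_G\in X$ yields the explicit formula in the statement: for $x\in L$ and $\sigma_i\in G$ representing the coset $n_i^{-1}(\overline{1}_G)\in X$, one has $(\sum c_i n_i)\cdot x = \sum c_i\sigma_i(x)$, which is independent of the choice of representatives since $L$ is fixed by $G'$.

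The main obstacle lies in the second step: justifying that every Hopf Galois structure on the split algebra $\widetilde{L}^X$ arises from a regular subgroup of $\Perm(X)$. This requires combining the representation theory of pointed cocommutative Hopf algebras with faithfully flat descent, and tracking the canonical identifications carefully. Once the normalization condition is isolated as the obstruction to descent and the above dictionary between permutation actions and Hopf module actions is in place, verifying the explicit formula and completing the bijection is a formal check.
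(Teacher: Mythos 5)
This theorem is not proved in the paper: it is quoted from Greither--Pareigis and the authors simply cite \cite[Theorem 6.8]{childs}. Your outline is exactly the standard Galois-descent proof given in that reference (base change to $\widetilde{L}$, identify $\widetilde{L}\otimes_K L$ with $\operatorname{Map}(X,\widetilde{L})$, classify Hopf Galois structures on the split algebra by regular subgroups of $\mathrm{Perm}(X)$, then descend, with normalization by $\lambda(G)$ appearing as the descent condition), so in that sense you are following the same route as the source the paper relies on. The one place where your justification is off target is the step you yourself single out as the main obstacle: you cannot invoke ``structure theory of cocommutative pointed Hopf algebras'' as an input, since $H$ is not assumed cocommutative or pointed. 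In the actual argument one \emph{derives} that any Hopf algebra $H$ making the split algebra $E=\widetilde{L}^X$ Hopf Galois is a group algebra: the Galois (torsor) condition gives $E\otimes_{\widetilde L}H^*\cong E\otimes_{\widetilde L}E\cong \widetilde{L}^{X\times X}$, so over the connected base $\widetilde{L}$ the dual $H^*$ must itself be split as an algebra, whence the algebra maps $H^*\to\widetilde{L}$ --- i.e.\ the grouplikes of $H$ --- form a basis of $H$; these grouplikes act as algebra automorphisms of $E$, hence permute the primitive idempotents, and regularity of the resulting subgroup of $\mathrm{Perm}(X)$ is forced by bijectivity of $j$. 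With that step repaired, the rest of your outline (the conjugation action of $\lambda(G)$ on $N$, the normalization condition as the descent obstruction, and the evaluation-at-$\overline{1}_G$ formula) is the standard and correct completion.
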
 
\noindent See \cite[Theorem 6.8]{childs}. 

If a Hopf Galois structure corresponds to a group $N$, its type is defined as the isomorphism class of $N$. 

Hopf Galois theory can be used to generalize Galois module theory, giving rise to a Hopf Galois module theory. In the approach introduced by Leopoldt, Galois module theory studies the structure of the valuation ring $\mathcal{O}_L$ of a Galois extension $L/K$ of local fields over its associated order $\mathfrak{A}_{K[G]}$, defined as the maximal $\mathcal{O}_K$-order in $K[G]$ that acts on $\mathcal{O}_L$. If $L/K$ is $H$-Galois, the associated order of $\mathcal{O}_L$ in $H$ is defined as $$\mathfrak{A}_H=\{\alpha\in H\,|\,\alpha\cdot x\in\mathcal{O}_L\hbox{ for every }x\in\mathcal{O}_L\}.$$ In the context of the existence of a normal integral basis, it is interesting to determine the freeness of $\mathcal{O}_L$ as $\mathfrak{A}_H$-module as $H$ runs through the different Hopf Galois structures of $L/K$.

In this paper we consider Hopf Galois structures of a semidirect extension $L/K$ of fields, that is, a Galois extension whose Galois group $G$ is a semidirect product. We will write $G=J\rtimes G'$ with $J$ a normal subgroup of $G$ and $E=L^{G'}$, $F=L^J$ and we will focus on induced Hopf Galois structures. These are structures built from a Hopf Galois structure on $E/K$ and a Hopf Galois structure on $L/E$. This notion was introduced by Crespo, Rio and Vela in their paper \cite{cresporiovela}. 
The type of an induced Hopf Galois structure is the direct product of the types of the corresponding Hopf Galois structures of $L/E$ and $E/K$. As a consequence, the regular subgroup $N$ which determines such a Hopf Galois structure is a direct product $N=N_1\times N_2$. 

Our first results show that certain properties are directly related to the type, since we have a complete analogy with the situation of classical Galois extensions whose Galois group is a direct product of subgroups. For such extensions we can use standard Galois theory to prove the following:
\begin{pro}\label{progaloisinduced}
Let $L/K$ be a finite Galois extension with Galois group $G$ and assume that $G$ can be written as a direct product $G=J\times G'$. Let $E=L^{G'}$ and  $F=L^{J}$. Then:
\begin{itemize}
    \item[1.] $E/K$ and $F/K$ are Galois extensions.
    \item[2.] $L=EF$ and $E\cap F=K$.
    \item[3.] $E/K$ and $F/K$ are linearly disjoint, namely the canonical map $E\otimes_K F\longrightarrow EF$ is a $K$-isomorphism.
    \item[4.] $K[G]=K[J]\otimes_K K[G']$.
    \item[5.] The Galois action of $K[G]$ on $L$ is the Kronecker product of the Galois actions of $K[J]$ on $E$ and $K[G']$ on $F$.
\end{itemize}
\end{pro}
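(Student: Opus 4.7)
The plan is to exploit the classical Galois correspondence, leveraging that in a direct product $G=J\times G'$ both factors are normal subgroups of $G$ and commute elementwise. For parts (1) and (2), normality of $J$ and $G'$ gives via the fundamental theorem that $E/K$ and $F/K$ are Galois, with Galois groups $G/G'\cong J$ and $G/J\cong G'$ respectively. The intersection $E\cap F$ is the fixed field of the subgroup $\langle J,G'\rangle=JG'=G$, so $E\cap F=K$; the compositum $EF$ is the fixed field of $J\cap G'=\{1\}$, so $EF=L$.

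For (3), the canonical $K$-linear map $E\otimes_K F\to EF$ is always surjective, and by (2) the dimensions agree: $\dim_K(E\otimes_K F)=[E:K][F:K]=|J||G'|=|G|=[L:K]$. Hence the map is an isomorphism.

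For (4), I would verify that the obvious $K$-linear map $\varphi\colon K[J]\otimes_K K[G']\to K[G]$ sending $j\otimes g'\mapsto jg'$ is a $K$-algebra isomorphism. Surjectivity follows from $G=JG'$, injectivity from the dimension equality, and multiplicativity from the fact that elements of $J$ commute with elements of $G'$ inside $G$, which is precisely the content of the direct product decomposition.

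For (5), under the identifications of (3) and (4), I would take $\sigma\in J$, $\tau\in G'$, $x\in E$, $y\in F$, and use that $G'=\mathrm{Gal}(L/E)$ fixes $E$ pointwise while $J=\mathrm{Gal}(L/F)$ fixes $F$ pointwise, so $(\sigma\tau)(xy)=\sigma(x)\tau(y)$, which is exactly the Kronecker product action of $\sigma\otimes\tau$ on $x\otimes y$; extending by $K$-linearity gives (5). The only real obstacle is bookkeeping: one must carefully distinguish the several roles played by $J$ and $G'$ (as subgroups of $G$, as Galois groups of $E/K$ and $F/K$ via the projections, and as $\mathrm{Gal}(L/F)$ and $\mathrm{Gal}(L/E)$) and verify that the identifications used in (3), (4) and (5) are mutually compatible. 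Beyond this, every step reduces to a routine application of the Galois correspondence or a dimension count.
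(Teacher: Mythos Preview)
Your proposal is correct and is precisely the standard Galois-theoretic argument the paper has in mind: the paper does not supply a proof of this proposition at all, merely prefacing it with ``we can use standard Galois theory to prove the following'' and later remarking that items (2) and (3) are ``straightforward by using basic Galois theory.'' Your write-up fills in exactly these routine details (normality of both factors, the subgroup computations $\langle J,G'\rangle=G$ and $J\cap G'=\{1\}$, the dimension count for linear disjointness, and the elementwise commutation giving the algebra isomorphism and the Kronecker-product action), so there is nothing to compare.
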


In the setting of Hopf Galois theory, Proposition \ref{progaloisinduced} gives information about the classical Galois structure of $L/K$. We shall check that induced Hopf Galois structures of Galois extensions $L/K$ with Galois group of the form $G=J\rtimes G'$ present a similar behaviour. 

We note that the condition that $G'$ has a normal complement $J$ is equivalent to the condition (from \cite{greitherpareigis}) that $E/K$ is an almost classical Hopf Galois extension. On the other hand, $L/E$ is Galois and therefore Hopf Galois. By \cite[Theorem 3]{cresporiovela}, every choice of Hopf Galois structures of $E/K$ and $L/E$ determines a Hopf Galois structure of $L/K$. On the other hand, since $J$ is normal $F/K$ is Galois and hence a Hopf Galois extension. We shall study in \ref{secinducedhopfalg} the relationship between structures of $L/E$ and $F/K$.

\begin{teo}\label{firstmaintheorem}
Let $L/K$ be a finite Galois extension with Galois group $G=J\rtimes G'$. Let $E=L^{G'}$ and $F=L^J$. Then:
\begin{itemize}
    \item[1.] $E/K$ and $F/K$ are Hopf Galois extensions.
    \item[2.] $L=EF$ and $E\cap F=K$.
    \item[3.] $E/K$ and $F/K$ are linearly disjoint.
\end{itemize}

Let $E/K$ be $H_1$-Galois and let $L/E$ be $H_2$-Galois. We consider the corresponding induced Hopf Galois structure of $L/K$ . Let $H$ be its associated Hopf algebra. Then:
\begin{itemize}
    \item[4.] $H=H_1\otimes_K\overline{H}$, where $\overline{H}$ is the Hopf algebra of the Hopf Galois structure of $F/K$ such that $\overline{H}\otimes_K E=H_2$ (see Proposition \ref{proptensorcorresp}).
   
\[
\xymatrix{
& \ar@{-}[dl]_{H_2} L \ar@{-}[dd]^{H} \ar@{-}[dr] & \\
E \ar@{-}[dr]_{H_1} & & \ar@{-}[dl]^{\overline{H}} F \\
& K &}
\]
    \item[5.] The Hopf action of $H$ on $L$ is the Kronecker product of the Hopf actions of $H_1$ on $E$ and $\overline{H}$ on $F$.
\end{itemize}
\end{teo}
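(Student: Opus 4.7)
Parts (1), (2), (3) are purely Galois-theoretic consequences of $G=J\rtimes G'$. Since $J\trianglelefteq G$, $F/K$ is classically Galois with group $G/J$, hence Hopf Galois. Since $G'$ admits the normal complement $J$, the extension $E/K$ is almost classically Galois and therefore Hopf Galois (this is precisely the normal complement criterion from \cite{greitherpareigis} recalled in the paragraph preceding the theorem). For (2) I would apply the Galois correspondence for $L/K$: the fixator of $E\cap F$ is the subgroup $\langle G',J\rangle=G$, giving $E\cap F=K$, and the fixator of $EF$ is $G'\cap J=\{1\}$, giving $EF=L$. Statement (3) then follows from a degree count: $[EF:K]=[L:K]=|G|=|J||G'|=[E:K][F:K]$, so the canonical map $E\otimes_K F\to EF$ is a $K$-isomorphism.

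For (4), I would start from the Greither--Pareigis description $H=L[N]^G$ with $N=N_1\times N_2$, where $N_1\subset\Perm(G/G')$ gives the structure on $E/K$ and $N_2\subset\Perm(G')$ gives the structure on $L/E$. The first move is to rewrite
\[
L[N_1\times N_2]\;=\;L[N_1]\otimes_L L[N_2],
\]
and then to separate the $G$-invariants across the two factors. This requires understanding the conjugation action of $\lambda(G)\subset\Perm(G)$ on $N=N_1\times N_2$: the induced embedding coming from the semidirect decomposition $G\cong (G/G')\times G'$ (as a set) should show that $G$ acts componentwise on $N_1$ and $N_2$, with the action on $N_1$ factoring through an appropriate quotient compatible with the Galois closure of $E/K$, and with the action on $N_2$ matching the one used to form $\overline{H}$ via Proposition \ref{proptensorcorresp}. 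Once this is established, the invariants factor and one identifies the first tensorand with $H_1$ and the second with $\overline{H}$.

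For (5), I would use the explicit Greither--Pareigis formula in the theorem statement. Writing a general element of $L$ as $ef$ with $e\in E$, $f\in F$ (legal by the linear disjointness in (3)) and a general element of $H_1\otimes_K\overline{H}$ as $\sum c_i\, n_i^{(1)}\otimes n_i^{(2)}$, the induced regular subgroup acts on $\overline{1}_G$ componentwise via the chosen bijection $G\cong(G/G')\times G'$. This lets me show
\[
\bigl(n^{(1)}\otimes n^{(2)}\bigr)\cdot(ef)\;=\;\bigl(n^{(1)}\cdot e\bigr)\,\bigl(n^{(2)}\cdot f\bigr),
\]
which is exactly the Kronecker product of the $H_1$-action on $E$ and the $\overline{H}$-action on $F$.

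The main obstacle is the bookkeeping in (4) and (5): one must track three different $\Perm$-groups and the associated conjugation actions, and verify that the embedding $N_1\times N_2\hookrightarrow\Perm(G)$ coming from the induced construction of \cite{cresporiovela} really does respect the product decomposition of $G$ induced by $G=J\rtimes G'$. Proposition \ref{proptensorcorresp}, which supplies the bijection between Hopf Galois structures of $L/E$ and of $F/K$ via tensoring with $E$, is the conceptual bridge that turns the $L/E$ datum $H_2$ into the $F/K$ datum $\overline{H}$ needed to state the decomposition cleanly.
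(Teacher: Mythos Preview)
Your proposal is correct and follows essentially the same route as the paper: items (1)--(3) are handled by the normal-complement criterion and elementary Galois theory exactly as you describe, while for (4) and (5) the paper also starts from $L[N]^G$ with $N=\iota(N_1\times N_2)$, uses that conjugation by $\lambda(G)$ acts componentwise (via $\lambda_c$ on $N_1$ and via $\lambda^{G/J}$ on $N_2$) to factor the invariants as $L[N_1]^G\otimes_K F[N_2]^{G/J}$ (Proposition~\ref{propinduced}), and then verifies the Kronecker-product action by the direct Greither--Pareigis computation you outline (Proposition~\ref{actioninduced}). The only cosmetic difference is that the paper writes the intermediate tensor product over $K$ rather than over $L$, and works throughout with the identification $G/G'\cong J$ so that $N_1\subset\Perm(J)$; the bookkeeping you flag as the main obstacle is precisely what the paper spends the first part of Section~\ref{sectioninduced} setting up via the maps $\lambda_c$, $\lambda'$, $\lambda^{G/J}$, and $\iota$.
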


The second and third items are again straightforward by using basic Galois theory. We shall prove the remaining assertions in Section \ref{sectioninduced}.

In Sections \ref{sectionintegral} and \ref{secfreeness} we deal with the Hopf Galois module structure at the integer level. $K$ will be the quotient field of a principal ideal domain $\mathcal{O}_K$, $L/K$ will be a finite separable Hopf Galois extension and $\mathcal{O}_L$ will be the integral closure of $\mathcal{O}_K$ in $L$. As in linear representation theory, we study Hopf actions trough matrices and we show how to use matrices to obtain a basis for the associated order and to check for the existence of a free generator of $\mathcal{O}_L$ as module over the associated order. 

When we restrict ourselves to induced Hopf Galois structures, we can ask whether the induced Hopf Galois module structure of $\mathcal{O}_L$ can be described from the Hopf Galois module structures of $\mathcal{O}_E$ and $\mathcal{O}_F$. With regard to the classical Galois module structure of $\mathcal{O}_L$ when $L/K$ is a direct extension (i.e, with Galois group isomorphic to a direct product), Byott and Lettl included in \cite{byottlettl} the following:
\begin{pro} Let $K$ be the quotient field of a Dedekind domain $\mathcal O_K$ and let $E/K$, $F/K$ be finite Galois extensions. Put $L=EF$ and suppose that $E/K$ and $F/K$ are arithmetically disjoint. Then:
\begin{itemize}
    \item[1.] $\mathfrak{A}_{L/F}=\mathfrak{A}_{E/K}\otimes_{\mathcal{O}_K}\mathcal{O}_{F}$ and $\mathfrak{A}_{L/K}=\mathfrak{A}_{E/K}\otimes_{\mathcal{O}_K}\mathfrak{A}_{F/K}$.
    \item[2.] If there exists some $\gamma\in\mathcal{O}_E$ with $\mathcal{O}_E=\mathfrak{A}_{E/K}\cdot\gamma$, then $\mathcal{O}_L=\mathfrak{A}_{L/F}\cdot(\gamma\otimes1)$. \\ If there also exists $\delta\in\mathcal{O}_{F}$ with $\mathcal{O}_F=\mathfrak{A}_{F/K}\cdot\delta$, then $\mathcal{O}_L=\mathfrak{A}_{L/K}\cdot(\gamma\otimes\delta)$.
\end{itemize}
\end{pro}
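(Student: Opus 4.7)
The plan is to exploit the Kronecker structure enforced by arithmetic disjointness: organize the algebras and lattices as tensor products, handle the easy inclusions by direct evaluation, reduce the reverse inclusions to basis computations after localization, and read off the freeness from item 1.

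Arithmetic disjointness gives $\mathcal{O}_L = \mathcal{O}_E \otimes_{\mathcal{O}_K} \mathcal{O}_F$ and linear disjointness $L \cong E \otimes_K F$. Galois theory then yields $G = J \times G'$ with $J = \mathrm{Gal}(L/F) \cong \mathrm{Gal}(E/K)$ and $G' = \mathrm{Gal}(L/E) \cong \mathrm{Gal}(F/K)$, so $K[G] = K[J] \otimes_K K[G']$; by Proposition \ref{progaloisinduced}(5) the classical action is the Kronecker product $(a \otimes b)(x \otimes y) = a(x) \otimes b(y)$, and $F[J]$ acts on $L$ by the $F$-linear extension of the $K[J]$-action on $E$. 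The easy inclusions $\mathfrak{A}_{E/K} \otimes_{\mathcal{O}_K} \mathcal{O}_F \subseteq \mathfrak{A}_{L/F}$ and $\mathfrak{A}_{E/K} \otimes_{\mathcal{O}_K} \mathfrak{A}_{F/K} \subseteq \mathfrak{A}_{L/K}$ follow at once by evaluating on pure tensors.

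For the reverse inclusions I would localize at each prime of $\mathcal{O}_K$ (associated orders commute with localization) to assume $\mathcal{O}_K$ is a DVR and $\mathcal{O}_E, \mathcal{O}_F$ are $\mathcal{O}_K$-free. For the first identity, fix an $\mathcal{O}_K$-basis $\{f_r\}$ of $\mathcal{O}_F$; every $\alpha \in F[J]$ is uniquely $\sum_r a_r f_r$ with $a_r \in K[J]$, and testing on $x \otimes 1$ for varying $x \in \mathcal{O}_E$ forces $a_r(x) \in \mathcal{O}_E$ by reading off coefficients in the basis $\{f_r\}$, so $a_r \in \mathfrak{A}_{E/K}$. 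For the second identity, embed $K[G] \hookrightarrow \mathrm{End}_K(L) = \mathrm{End}_K(E) \otimes_K \mathrm{End}_K(F)$, under which $\mathrm{End}_{\mathcal{O}_K}(\mathcal{O}_L)$ factors as $\mathrm{End}_{\mathcal{O}_K}(\mathcal{O}_E) \otimes_{\mathcal{O}_K} \mathrm{End}_{\mathcal{O}_K}(\mathcal{O}_F)$ and $\mathfrak{A}_{L/K} = K[G] \cap \mathrm{End}_{\mathcal{O}_K}(\mathcal{O}_L)$. Pick an $\mathcal{O}_K$-basis $\{b_r\}$ of $\mathfrak{A}_{F/K}$ (which doubles as a $K$-basis of $K[G']$) and extend it to an $\mathcal{O}_K$-basis $\{b_r\} \cup \{b'_s\}$ of $\mathrm{End}_{\mathcal{O}_K}(\mathcal{O}_F)$; then for $\alpha \in \mathfrak{A}_{L/K}$, membership in $K[G]$ kills all $\{b'_s\}$-components of $\alpha$, while membership in the lattice forces its $\{b_r\}$-coefficients into $K[J] \cap \mathrm{End}_{\mathcal{O}_K}(\mathcal{O}_E) = \mathfrak{A}_{E/K}$, yielding the desired tensor factorization.

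The freeness in item 2 is then immediate: if $\mathcal{O}_E = \mathfrak{A}_{E/K}\cdot\gamma$ then
$$\mathcal{O}_L = (\mathfrak{A}_{E/K}\cdot\gamma) \otimes_{\mathcal{O}_K} \mathcal{O}_F = (\mathfrak{A}_{E/K} \otimes_{\mathcal{O}_K} \mathcal{O}_F)\cdot(\gamma\otimes 1) = \mathfrak{A}_{L/F}\cdot(\gamma\otimes 1),$$
and adjoining $\mathcal{O}_F = \mathfrak{A}_{F/K}\cdot\delta$ gives $\mathcal{O}_L = \mathfrak{A}_{L/K}\cdot(\gamma\otimes\delta)$ analogously. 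The anticipated obstacle is the second identity of item 1: lattice intersections inside tensor products do not in general split as tensor products of one-sided intersections, and the argument works here only because arithmetic disjointness provides the integral identification of endomorphism rings, which in turn allows the compatible extension of the $\mathfrak{A}_{F/K}$-basis that the basis argument requires.
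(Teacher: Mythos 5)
Your argument is correct, but it is worth noting that the paper does not prove this proposition at all: it is quoted verbatim from Byott--Lettl and justified only by the citation to \cite[Lemma 5]{byottlettl}. So the relevant comparison is with the method the paper develops in Sections \ref{sectionintegral}--\ref{sectioninduced} to prove the Hopf Galois generalizations (Theorems \ref{teodescassocorder}, \ref{teoinducedfreeness} and \ref{thirdmaintheorem}), of which this classical statement is the special case $H=K[J]\otimes_K K[G']$. For the identity $\mathfrak{A}_{L/F}=\mathfrak{A}_{E/K}\otimes_{\mathcal{O}_K}\mathcal{O}_F$ and for item 2 your arguments essentially coincide with the paper's (expand $h=\sum_j h^{(j)}z_j$ over an integral basis of $\mathcal{O}_F$ and read off coefficients; transport of free generators through $\mathcal{O}_L=\mathcal{O}_E\otimes_{\mathcal{O}_K}\mathcal{O}_F$). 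For the identity $\mathfrak{A}_{L/K}=\mathfrak{A}_{E/K}\otimes_{\mathcal{O}_K}\mathfrak{A}_{F/K}$ your route is genuinely different: the paper works over a PID, encodes the action in the $n^2\times n$ matrix $M(H,L)$, shows $PM(H,L)=M(H_1,E)\otimes M(\overline H,F)$, and concludes via uniqueness of the Hermite normal form and $(D_1\otimes\overline D)^{-1}=D_1^{-1}\otimes\overline D^{-1}$; you instead view $\mathfrak{A}_{L/K}$ as $K[G]\cap\operatorname{End}_{\mathcal{O}_K}(\mathcal{O}_L)$ inside $\operatorname{End}_{\mathcal{O}_K}(\mathcal{O}_E)\otimes_{\mathcal{O}_K}\operatorname{End}_{\mathcal{O}_K}(\mathcal{O}_F)$ and split the intersection by extending an $\mathcal{O}_K$-basis of $\mathfrak{A}_{F/K}$ to one of $\operatorname{End}_{\mathcal{O}_K}(\mathcal{O}_F)$. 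The paper's method buys explicit bases of the associated order (which it then needs for the freeness criteria via $\mathcal{D}_\beta(H,L)$), while yours is coordinate-free, works directly over a Dedekind domain after localization, and makes transparent exactly where arithmetic disjointness enters, namely in the integral identification of endomorphism rings.

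Two small points you should make explicit: the basis-extension step requires that $\mathfrak{A}_{F/K}$ be a pure (saturated) $\mathcal{O}_K$-submodule of $\operatorname{End}_{\mathcal{O}_K}(\mathcal{O}_F)$, which holds because it is the intersection of that lattice with the $K$-subspace $K[G']$, so the quotient is torsion-free and hence free over the localized (discrete valuation) ring; and the reduction to the local case needs both that formation of associated orders commutes with localization and that a lattice over a Dedekind domain is recovered as the intersection of its localizations. Both are standard and neither is a gap.
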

\noindent See \cite[Lemma 5]{byottlettl}.

Recall that $E/K$ and $F/K$ are said to be arithmetically disjoint over $K$ if 
$\mathcal{O}_{EF}=\mathcal{O}_E\otimes_{\mathcal{O}_K}\mathcal{O}_F$, or equivalently, if their discriminants are coprime and $E$ is linearly disjoint from $F$ over $K$. The decomposition turns out to be very useful in order to study the Galois module structure of $\mathcal{O}_{EF}$ in terms of $\mathcal{O}_E$ and $\mathcal{O}_F$.

Going back to the semidirect case, if we keep the condition that $E/K$ and $F/K$ are arithmetically disjoint, we are able to obtain analogous results for induced Hopf Galois structures. We will describe the induced Hopf Galois module structure of $\mathcal{O}_L$ by considering the corresponding module structures of $\mathcal{O}_E$ and $\mathcal{O}_F$ as follows.

\begin{teo}\label{secondmaintheorem} 
Let $K$ be the quotient field of a principal ideal domain $\mathcal{O}_K$, $L/K$ a finite separable Hopf Galois extension and $\mathcal{O}_L$ the integral closure of $\mathcal{O}_K$ in $L$.
Assume that the structure is an induced one and its Hopf algebra is $H=H_1\otimes_K\overline{H}$. If $E/K$ and $F/K$ are arithmetically disjoint then the following statements hold:
\begin{itemize}
    \item[1.] $\mathfrak{A}_H=\mathfrak{A}_{H_1}\otimes_{\mathcal{O}_K}\mathfrak{A}_{\overline{H}}$.
    \item[2.] If $\mathcal{O}_E$ is $\mathfrak{A}_{H_1}$-free and $\mathcal{O}_F$ is $\mathfrak{A}_{\overline{H}}$-free, then $\mathcal{O}_L$ is $\mathfrak{A}_H$-free. Moreover, an $\mathfrak{A}_H$-generator of $\mathcal{O}_L$ is the product of an $\mathfrak{A}_{H_1}$-generator of $\mathcal{O}_E$ and an $\mathfrak{A}_{\overline{H}}$-generator of $\mathcal{O}_F$.
\end{itemize}
\end{teo}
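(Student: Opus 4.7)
The plan is to exploit the tensor-product decompositions given by Theorem \ref{firstmaintheorem}: $H = H_1 \otimes_K \overline{H}$, and the Hopf action on $L$ is the Kronecker product of the actions of $H_1$ on $E$ and $\overline{H}$ on $F$. Arithmetic disjointness provides $\mathcal{O}_L = \mathcal{O}_E \otimes_{\mathcal{O}_K} \mathcal{O}_F$, which is free of rank $[L:K]$ over the PID $\mathcal{O}_K$, so one has the standard identification $\End_{\mathcal{O}_K}(\mathcal{O}_L) = \End_{\mathcal{O}_K}(\mathcal{O}_E) \otimes_{\mathcal{O}_K} \End_{\mathcal{O}_K}(\mathcal{O}_F)$. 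Identifying $H$ with its image under the injective Hopf-action map $\phi\colon H \hookrightarrow \End_K(L)$, I write $\mathfrak{A}_H = \phi^{-1}(\End_{\mathcal{O}_K}(\mathcal{O}_L))$ and similarly for $\mathfrak{A}_{H_1}$ and $\mathfrak{A}_{\overline{H}}$; crucially, $\phi$ decomposes as $\phi_1 \otimes \phi_2$ where $\phi_1$, $\phi_2$ are the action maps of $H_1$ on $E$ and $\overline{H}$ on $F$.

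For part 1, the inclusion $\mathfrak{A}_{H_1} \otimes_{\mathcal{O}_K} \mathfrak{A}_{\overline{H}} \subseteq \mathfrak{A}_H$ follows immediately from the Kronecker identity $(\alpha \otimes \beta) \cdot (x \otimes y) = (\alpha \cdot x) \otimes (\beta \cdot y)$. For the reverse inclusion I would use a saturation argument. Both $\phi_1(\mathfrak{A}_{H_1})$ and $\phi_2(\mathfrak{A}_{\overline{H}})$ are pure $\mathcal{O}_K$-submodules of $\End_{\mathcal{O}_K}(\mathcal{O}_E)$ and $\End_{\mathcal{O}_K}(\mathcal{O}_F)$ respectively, because their quotients embed into the $K$-torsion-free spaces $\End_K(E)/\phi_1(H_1)$ and $\End_K(F)/\phi_2(\overline{H})$. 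Over a PID, pure submodules of finitely generated free modules are direct summands, so by choosing complements and tensoring one sees that $\phi_1(\mathfrak{A}_{H_1}) \otimes_{\mathcal{O}_K} \phi_2(\mathfrak{A}_{\overline{H}})$ is a direct summand of $\End_{\mathcal{O}_K}(\mathcal{O}_E) \otimes_{\mathcal{O}_K} \End_{\mathcal{O}_K}(\mathcal{O}_F)$, hence pure in it. Since its $K$-span is precisely $\phi(H)$, its intersection with $\End_{\mathcal{O}_K}(\mathcal{O}_L)$ equals itself, yielding $\mathfrak{A}_H = \mathfrak{A}_{H_1} \otimes_{\mathcal{O}_K} \mathfrak{A}_{\overline{H}}$. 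An alternative and perhaps more natural route, given the matrix framework of Sections \ref{sectionintegral}--\ref{secfreeness}, is to fix $\mathcal{O}_K$-bases of $\mathfrak{A}_{H_1}$ and $\mathfrak{A}_{\overline{H}}$ together with $\mathcal{O}_K$-bases $\{e_k\}$, $\{f_l\}$ of $\mathcal{O}_E$, $\mathcal{O}_F$; the matrices representing the tensor-basis of $H$ on $\mathcal{O}_L$ in the basis $\{e_k \otimes f_l\}$ are then Kronecker products of integral matrices, from which coefficient-wise integrality of any element of $\mathfrak{A}_H$ is read off directly.

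For part 2, with the equality $\mathfrak{A}_H = \mathfrak{A}_{H_1} \otimes_{\mathcal{O}_K} \mathfrak{A}_{\overline{H}}$ available, the Kronecker formula immediately yields
\[
\mathfrak{A}_H \cdot (\gamma \otimes \delta) = (\mathfrak{A}_{H_1} \cdot \gamma) \otimes_{\mathcal{O}_K} (\mathfrak{A}_{\overline{H}} \cdot \delta) = \mathcal{O}_E \otimes_{\mathcal{O}_K} \mathcal{O}_F = \mathcal{O}_L.
\]
The map $\mathfrak{A}_H \to \mathcal{O}_L$, $\alpha \mapsto \alpha \cdot (\gamma \otimes \delta)$, is then a surjection between free $\mathcal{O}_K$-modules of the same rank $[L:K]$, so it must be an isomorphism; hence $\gamma \otimes \delta$ is a free $\mathfrak{A}_H$-generator of $\mathcal{O}_L$.

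The main obstacle lies in the saturation/intersection step in part 1. The lattice argument sketched above is conceptually clean but requires care in tracking the ambient modules and the interaction between the tensor product over $K$ and over $\mathcal{O}_K$. The matrix-theoretic approach of Sections \ref{sectionintegral}--\ref{secfreeness} is likely the framework in which the paper carries this out, since it converts integrality of a Hopf algebra element into integrality of its representing matrix, and the Kronecker-product structure of the representing matrix makes the implication essentially bookkeeping.
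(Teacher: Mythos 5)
Your argument is correct, but for part 1 it takes a genuinely different route from the paper's. The paper stays entirely inside the matrix framework of Sections \ref{sectionintegral}--\ref{secfreeness}: it first shows (Theorem \ref{actioninducedmatrices}) that $P\,M(H,L)=M(H_1,E)\otimes M(\overline{H},F)$ for an explicit permutation matrix $P$, then proves that the Hermite normal form of $M(H,L)$ is $D_1\otimes\overline{D}$, where $D_1$ and $\overline{D}$ are the Hermite normal forms of the two factors (via the mixed-product property and uniqueness of the Hermite form), and finally reads off an $\mathcal{O}_K$-basis of $\mathfrak{A}_H$ from the columns of $(D_1\otimes\overline{D})^{-1}=D_1^{-1}\otimes\overline{D}^{\,-1}$, which are precisely the products of the basis elements of $\mathfrak{A}_{H_1}$ and $\mathfrak{A}_{\overline{H}}$. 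Your primary argument --- each $\phi_i(\mathfrak{A})$ is a direct summand of the integral endomorphism ring because its quotient embeds in a $K$-vector space and is therefore torsion-free, summands tensor to summands, and a summand $M$ of a free module $N$ satisfies $KM\cap N=M$ --- reaches the same equality more abstractly; all its steps check out (in particular the associated orders are full lattices, so the $K$-span of the tensor product really is $\phi(H)$). What the paper's route buys is an explicit basis of $\mathfrak{A}_H$ given by products of basis elements, which it then reuses in the worked examples and in the freeness test via the matrices $\mathcal{D}_\beta$; what yours buys is brevity and independence from coordinates. Note that your sketched ``alternative'' matrix argument is less immediate than you suggest: integrality is not read off coefficient-wise from the Kronecker factorization alone, and the paper needs the Hermite-normal-form uniqueness step to close that route. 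Your part 2 is essentially the paper's proof of Theorem \ref{teoinducedfreeness}, phrased via surjectivity of $\alpha\mapsto\alpha\cdot(\gamma\otimes\delta)$ between free $\mathcal{O}_K$-modules of equal rank rather than via explicitly matching bases; both are fine.
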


Since $E/K$ and $F/K$ are linearly disjoint, it is well known that for every Hopf Galois structure $H_1$ of $E/K$, $H_1\otimes_KF$ is a Hopf Galois structure of $L/F$. We will also describe the module structure of $\mathcal{O}_L$ in this Hopf Galois structures in the following way:
\begin{teo}\label{thirdmaintheorem} Under the same hypothesis,
\begin{itemize}
    \item[1.] $\mathfrak{A}_{H_1\otimes_K F}=\mathfrak{A}_{H_1}\otimes_{\mathcal{O}_K}\mathcal{O}_F$.
    \item[2.] If $\mathcal{O}_E$ is $\mathfrak{A}_{H_1}$-free, then $\mathcal{O}_L$ is $\mathfrak{A}_{H_1\otimes_K F}$-free. 
    \end{itemize}
\end{teo}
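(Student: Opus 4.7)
The plan is to leverage the arithmetic disjointness hypothesis, which yields the identification $\mathcal{O}_L=\mathcal{O}_E\otimes_{\mathcal{O}_K}\mathcal{O}_F$, together with a choice of $\mathcal{O}_K$-basis $f_1,\ldots,f_n$ of $\mathcal{O}_F$ (available because $\mathcal{O}_K$ is a principal ideal domain). The same basis is a $K$-basis of $F$, so every element of $H_1\otimes_K F$ has a unique expression $\sum_i\alpha_i\otimes f_i$ with $\alpha_i\in H_1$, and every element of $\mathcal{O}_L$ has a unique expression $\sum_i x_i\otimes f_i$ with $x_i\in\mathcal{O}_E$. This converts the statements about $\mathfrak{A}_{H_1\otimes_K F}$ into coordinate-wise conditions on the $\alpha_i$.

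For item 1 I would establish both inclusions of $\mathfrak{A}_{H_1\otimes_K F}=\mathfrak{A}_{H_1}\otimes_{\mathcal{O}_K}\mathcal{O}_F$. First I need to fix the description of the Hopf action of $H_1\otimes_K F$ on $L\cong E\otimes_K F$: namely, $\alpha\otimes f$ acts by $(x\otimes y)\mapsto(\alpha\cdot x)\otimes fy$. With this in hand, $\supseteq$ is immediate, since $\alpha\in\mathfrak{A}_{H_1}$ and $f\in\mathcal{O}_F$ force the image to lie in $\mathcal{O}_E\otimes_{\mathcal{O}_K}\mathcal{O}_F$. For $\subseteq$ I would take $\eta=\sum_i\alpha_i\otimes f_i$ in $\mathfrak{A}_{H_1\otimes_K F}$ and test it on elements $x\otimes 1$ with $x\in\mathcal{O}_E$: the output $\sum_i(\alpha_i\cdot x)\otimes f_i$ belongs to $\mathcal{O}_L$, and uniqueness of the $\{f_i\}$-decomposition forces each $\alpha_i\cdot x$ into $\mathcal{O}_E$. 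Hence each $\alpha_i$ lies in $\mathfrak{A}_{H_1}$, and $\eta$ in $\mathfrak{A}_{H_1}\otimes_{\mathcal{O}_K}\mathcal{O}_F$.

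Item 2 then follows by tensoring: if $\mathcal{O}_E=\mathfrak{A}_{H_1}\cdot\gamma$, applying $-\otimes_{\mathcal{O}_K}\mathcal{O}_F$ gives $\mathcal{O}_L=(\mathfrak{A}_{H_1}\otimes_{\mathcal{O}_K}\mathcal{O}_F)\cdot(\gamma\otimes 1)=\mathfrak{A}_{H_1\otimes_K F}\cdot(\gamma\otimes 1)$, where the last equality uses item 1. Freeness of the generator $\gamma\otimes 1$ is verified by the same unique-decomposition trick: a relation $\sum_i(\alpha_i\cdot\gamma)\otimes f_i=0$ in $\mathcal{O}_L$ forces $\alpha_i\cdot\gamma=0$ for each $i$, and hence $\alpha_i=0$ by the freeness of $\mathcal{O}_E$ over $\mathfrak{A}_{H_1}$. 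The main obstacle I anticipate is not algebraic but notational: the clean identification of the Hopf action of $H_1\otimes_K F$ on $L$ as the $F$-linear extension of the $H_1$-action on $E$ under $L\cong E\otimes_K F$. Once that base-change description is spelled out, everything else is routine linear algebra over the $\mathcal{O}_K$-basis of $\mathcal{O}_F$.
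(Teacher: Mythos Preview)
Your proposal is correct and follows essentially the same route as the paper: both arguments fix an $\mathcal{O}_K$-basis $\{z_j\}$ of $\mathcal{O}_F$, use the description of the $H_1\otimes_K F$-action as the $F$-linear extension of the $H_1$-action, verify $\supseteq$ directly, and obtain $\subseteq$ by testing an element $\sum_j h^{(j)}\otimes z_j$ on $\gamma\otimes 1$ for $\gamma\in\mathcal{O}_E$ and reading off $h^{(j)}\cdot\gamma\in\mathcal{O}_E$ from the unique $\{z_j\}$-expansion in $\mathcal{O}_L$. For item~2 the paper simply invokes flatness of $\mathcal{O}_F$ over $\mathcal{O}_K$ to tensor the free-rank-one statement, which is exactly your argument with $\gamma\otimes 1$ as generator; your extra verification of injectivity is fine but not needed once one notes that $\mathfrak{A}_{H_1}\otimes_{\mathcal{O}_K}\mathcal{O}_F\to\mathcal{O}_E\otimes_{\mathcal{O}_K}\mathcal{O}_F$ is the base change of an isomorphism.
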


\section{Hopf Galois linear representations}\label{secmatrixaction}

Let $A$ be an $n$-dimensional $K$-algebra and let $V$ be a $K$-vector space. A linear representation of $A$ in $V$ is a $K$-algebra
homomorphism $\rho: A → \mathrm{End}_K(V)$  or, by choosing a $K$-basis of V, a $K$-algebra
homomorphism $\rho:A → \mathrm{Mat}_n(K)$. 
This is the notion of $A$-module that we have mentioned in the introduction. 
The tensor product or Kronecker product of two representations 
$\rho^1: A_1 → \mathrm{End}_K(V_1)$ and 
$\rho^2: A_2 → \mathrm{End}_K(V_2)$
is 
$$
\rho^1\otimes \rho^2: A_1\otimes_K A_2 → \mathrm{End}_K(V_1\otimes V_2) 
$$
defined by $$\rho^1\otimes \rho^2(a_1\otimes a_2)(v_1\otimes v_2)=
\rho^1(a_1)(v_1)\otimes \rho^2(a_2)(v_2).
$$
If we have a finite field extension $L/K$,
then $L$ is a $K$- algebra and we put
$1:L\to \mathrm{End}_K(L)$ for the representation given by multiplication in $L$.

In this section we consider Hopf Galois structures from the point of view of representations,
 having in mind that when we consider induced Hopf Galois structures we will deal with the  Kronecker product of actions and also, since the Hopf action appears in the definition of the associated order, that matrices become useful for explicit computations.

Let $L/K$ be a Galois extension of degree $n$ with Galois group $G$. Since $G$ acts in $L$ we have a Galois representation $\rho:G\to \mathrm{Aut}_K(L)$. Observe that a Galois representation gives a $K[G]$-module structure, that is, a linear representations of the Hopf algebra $K[G]$, since it 
can be extended to a $K$-linear representation $\rho\colon K[G]\longrightarrow \End_K(L).$
This representation provides full information about the action of $K[G]$ on $L$. In fact, $L/K$ being Galois is equivalent to $(1, \rho):L\otimes_K K[G]\to \End_K(L).$ being an isomorphism.

In a complete analogy, a Hopf-Galois structure for a field extension $L/K$ of degree $n$ corresponds to the existence 
of a $K$-Hopf algebra $H$ and a linear representation 
$\rho_H: H → \mathrm{End}_K(L)$ such that 
$j=(1,\rho_H):L\otimes_K H\to  \mathrm{End}_K(L)$
is an isomorphism. 
Recall that in the separable case, Greither and Pareigis show that $L\otimes_K H=L[N]$ ($H$ is an $L$-form of $K[N]$) with $N$ a group of order $n$. 
 
The Hopf action of $H$ in $L$ gives rise to a $K$-bilinear map
$$
\begin{array}{ccc}
     H\times L&\to& L  \\
     (h,x)&\mapsto& h\cdot x
\end{array}
$$
with values in $L$. Therefore, given a  $K$-basis $\{w_i\}_{i=1}^{n}$ of $H$ and a $K$-basis $\{\gamma_j\}_{j=1}^{n}$ of $L$, the action can be 
described through the $n\times n$ matrix $\left(w_i\cdot\gamma_j\right)_{ij}$
with entries in $L$. We will write these elements in the chosen basis of $L$ and
we will usually represent this matrix in a table in order to make clear the bases under consideration. For $1\leq i\leq n$, the $i$-th row of the table corresponds to the matrix $\rho_H(w_i)$ representing $w_i$.

\begin{example}\label{matrixgalois}
Let $L/K$ be a finite degree $n$ Galois extension and let $G$ be its Galois group. 
Let us consider the classical Galois representation
$\rho\colon K[G]\longrightarrow \End_K(L).$

Let us write $G=\{\sigma_1,...,\sigma_{n}\}$ and take these elements as  $K$-basis for $K[G]$. On the other hand, we fix a normal basis $\{\alpha_j=\sigma_j(\alpha)\}_{j=1}^{n}$ for $L$. 
We have $\sigma_i(\alpha_j)=\sigma_i\sigma_j(\alpha)$ and
via the regular representation of $G$ we can identify $\sigma_i$ with a permutation of the set $\{1,\dots,n\}$, namely
$\sigma_i(j)=k$ if $\sigma_i\sigma_j=\sigma_k$.
With this identification, the action of $K[G]$ on $L$ is the regular representation of the group $G$.
Hence, for every $1\leq i\leq n$, $\rho(\sigma_i)$ is
an $n\times n$ permutation matrix. 
\end{example}

\begin{example}\label{matrixgaloiscyclic}
In the previous example, assume that $G\cong C_n$. Let $\sigma$ be a generator of $G$ and take $\{\mathrm{Id},\sigma,...,\sigma^{n-1}\}$ as $K$-basis of $K[G]$. Then, the action of $K[G]$ on $L$ is given by:

\begin{equation*}\label{table3}
\begin{tabular}{c|cccc} & \begin{tabular}[c]{@{}l@{}}$\alpha$\end{tabular} & 
\begin{tabular}[c]{@{}l@{}}$\sigma(\alpha)$\end{tabular} &
\begin{tabular}[c]{@{}l@{}}$...$\end{tabular} &
\begin{tabular}[c]{@{}l@{}}$\sigma^{n-1}(\alpha)$\end{tabular}
\\ \hline
$\mathrm{Id}$ & $\alpha$ & $\sigma(\alpha)$ & $...$ & $\sigma^{n-1}(\alpha)$  \\
$\sigma$ & $\sigma(\alpha)$ & $\sigma^2(\alpha)$ & $...$ & $\alpha$  \\
$...$ & $...$ & $...$ & $...$ & $...$ \\
$\sigma^{n-1}$ & $\sigma^{n-1}(\alpha)$ & $\alpha$ & $...$ & $\sigma^{n-2}(\alpha)$  \\
\end{tabular}
\end{equation*}
and for every $i\in\mathbb{Z}/n\mathbb{Z}$, $\rho(\sigma^i)$ is the circulant matrix whose first column is the 
the $(i+1)$-th vector of the canonical basis of $K^n$.
\end{example}

\begin{example}\label{exeigQ3}
Let $E=\mathbb{Q}_3(\alpha)$, where $\alpha$ is a root of the polynomial $f(x)=x^3+3\in\mathbb{Q}_3[x]$. Then $E/\mathbb{Q}_3$ is a non-Galois extension with normal closure $L=E(z)$, where $z=\sqrt{-3}$. 

The roots of $f$ are $\alpha,\xi\alpha,\xi^2\alpha$, where $\xi=\frac{-1+z}{2}$. Let $r$ be the $\mathbb{Q}_3$-automorphism of $L$ given by $r(\alpha)=\xi\alpha$ and $r(z)=z$, and let $s$ be the one given by $s(z)=-z$ and $s(\alpha)=\alpha$. As permutations of the roots of $f$, $r=(\alpha,\xi\alpha,\xi^2\alpha)$ and $s=(\xi\alpha,\xi^2\alpha)$. These two elements generate $\mathrm{Gal}(L/\mathbb{Q}_3)$, which is dihedral of order $6$. Additionally, $s$ generates $G'=\mathrm{Gal}(L/E)$.

Let $X=G/G'$ and consider  $\lambda\colon G\longrightarrow\mathrm{Perm}(X)$ the morphism given by the action of $G$ on $X$ by left translation. By Greither-Pareigis theorem, since $Perm(X)\cong S_3$ has a unique subgroup of order $3$ and it is normalized by $\lambda(G)$, we know that there is a unique Hopf Galois structure of $E/\mathbb{Q}_3$.  The corresponding  $\mathbb{Q}_3$-Hopf algebra $H_1$ has $\mathbb{Q}_3$-basis $$w_1=\mathrm{Id},\,w_2=(\lambda(r)-\lambda(r)^{-1})z,\,w_3=\lambda(r)+\lambda(r)^{-1}.$$ The action of $H_1$ on $E$ is given by
\begin{equation}\label{table4}
\begin{tabular}{c|ccc} & \begin{tabular}[c]{@{}l@{}}$1$\end{tabular} & 
\begin{tabular}[c]{@{}l@{}}$\alpha$\end{tabular} &
\begin{tabular}[c]{@{}l@{}}$\alpha^2$\end{tabular}
\\ \hline
$w_1$ & $1$ & $\alpha$ & $\alpha^2$ \\
$w_2$ & $0$ & $3\alpha$ &$-3\alpha^2$ \\
$w_3$ & $2$ & $-\alpha$ & $-\alpha^2$ \\
\end{tabular},
\end{equation}
Then, the matrices representing $w_1,w_2$ and $w_3$ are
$$
\begin{pmatrix}
1 & 0 & 0\\
0 & 1 & 0 \\
0 & 0 & 1 \\ 
\end{pmatrix}
\qquad
\begin{pmatrix}
0 & 0 & 0 \\
0 & 3 & 0 \\
0 & 0 & -3 \\
\end{pmatrix}\qquad
\begin{pmatrix}
2 & 0 & 0 \\
0 & -1 & 0 \\
0 & 0 & -1
\end{pmatrix}.$$
On the other hand, the matrices representing multiplication by $1$, $\alpha$ and $\alpha^2$ are
$$
\begin{pmatrix}
1 & 0 & 0\\
0 & 1 & 0 \\
0 & 0 & 1 \\ 
\end{pmatrix}\qquad
\begin{pmatrix} 0&0&-3\\1&0&0
\\ 0&1&0\end{pmatrix}
\qquad 
\begin{pmatrix} 0&-3&0\\ 0&0&-3
\\1&0&0\end{pmatrix}.
$$
By multiplying these two families we can see that we obtain nine linearly independent matrices and explicitly check that $j:E\otimes_{\Q} H_1\to \End_{\Q}(E)$ is an isomorphism.
\end{example}

When we consider the classical Galois action, since elements of $G$ act as automorphisms in $L$, we have a canonical basis of the Hopf algebra whose action is represented with nonsingular matrices. As we have seen in the last example, for a general Hopf Galois action we can have elements in a basis of the Hopf algebra which act through non invertible matrices. 

\section{Associated orders of Hopf Galois actions}\label{sectionintegral}

Let $\mathcal{O}_K$ be a principal ideal domain with field of
fractions $K$. Let $L/K$ be a separable Hopf Galois extension of degree $n$ and let $\mathcal{O}_L$ be the integral closure of
$\mathcal{O}_K$ in $L$. If $H$ is the Hopf algebra of a Hopf Galois structure of $L/K$, then
the associated $\mathcal{O}_K$-order to $\mathcal{O}_L$ in $H$ is $$\mathfrak{A}_H=\{\alpha\in H\,|\,\alpha\cdot x\in\mathcal{O}_L\,\hbox{ for all }x\in\mathcal{O}_L\}.$$

In this section we establish a general method to compute an $\mathcal{O}_K$-basis of $\mathfrak{A}_H$.

\subsection{Motivating example}

In order to compute an $\mathcal{O}_K$-basis of $\mathfrak{A}_H$, we will use the space of solutions of a linear system with coefficient matrix that will be called $M(H,L)$. To motivate the construction let us go back to Example \ref{exeigQ3}. There, since the irreducible polynomial of $\alpha$ is $3$-Eisenstein, we have $\mathcal{O}_E=\mathbb{Z}_3[\alpha]$.  
An element $h_1w_1+h_2w_2+h_3w_3\in H_1$ belongs to the associated order if, and only if, $(h_1w_1+h_2w_2+h_3w_3)(x_1+x_2\alpha+x_3\alpha^2)\in\mathbb{Z}_3[\alpha]$ for all $x_1,x_2,x_3\in \mathbb{Z}_3$. This condition becomes 
$$
\left(h_1\begin{pmatrix}
1 & 0 & 0\\
0 & 1 & 0 \\
0 & 0 & 1 \\ 
\end{pmatrix}
+h_2 
\begin{pmatrix}
0 & 0 & 0 \\
0 & 3 & 0 \\
0 & 0 & -3 \\
\end{pmatrix}+h_3
\begin{pmatrix}
2 & 0 & 0 \\
0 & -1 & 0 \\
0 & 0 & -1
\end{pmatrix}\right)\begin{pmatrix}
x_1\\
x_2\\
x_3
\end{pmatrix}\in \mathbb{Z}_3^3$$
for all $x_1,x_2,x_3\in \mathbb{Z}_3$.
We could write this condition as
$$
(h_1,h_2,h_3)\begin{pmatrix}
w_1\\
w_2\\
w_3
\end{pmatrix}\in\mathrm{Mat}_3(\mathbb{Z}_3)
$$
using matrix representation of the basis elements $w_i$ and considering the second matrix as a block matrix.
The condition we obtain is
$$\begin{pmatrix}
h_1+2h_3 & 0 & 0 \\
0 & h_1+3h_2-h_3 & 0 \\
0 & 0 & h_1-3h_2-h_3
\end{pmatrix}\in \mathrm{Mat}_3(\mathbb{Z}_3)$$ and instead of the previous action on block matrices we are going to consider matrix actions of standard linear algebra:
$$\begin{pmatrix}
1 & 0 & 2 \\
0 & 0 & 0 \\
0 & 0 & 0 \\ 
0 & 0 & 0 \\
1 & 3 & -1 \\
0 & 0 & 0 \\ 
0 & 0 & 0 \\ 
0 & 0 & 0 \\ 
1 & -3 & -1
\end{pmatrix}
\begin{pmatrix}
h_1\\
h_2\\
h_3
\end{pmatrix}\in \mathbb{Z}_3^9
$$
Therefore, in the general situation, once we have fixed a $K$-basis of $L$ 
and a $K$-basis of $H$, we are going to use the $n^2 \times n$ matrix whose $j$th column is formed by concatenating the columns of the matrix for $w_j$.

Let us observe that in this example, after deleting zero rows, this last condition becomes 
$$
\begin{pmatrix}
1 & 0 & 2\\
1 & 3 & -1\\
1 & -3 & -1\\
\end{pmatrix}
\begin{pmatrix}
h_1\\
h_2\\
h_3
\end{pmatrix}\in \mathbb{Z}_3^3
$$
Now, this matrix is invertible with inverse
$$\frac{1}{6}\begin{pmatrix}
2 & 2 & 2 \\
0 & 1 & -1 \\
2 & -1 & -1
\end{pmatrix}.$$ Then, we have that $h=\sum_{i=1}^3h_iw_i\in\mathfrak{A}_H$ if and only if there are some $c_1,c_2,c_3\in\mathbb{Z}_3$ such that 
$$
\begin{pmatrix}
h_1\\
h_2\\
h_3
\end{pmatrix}=
\frac{1}{6}\begin{pmatrix}
2 & 2 & 2 \\
0 & 1 & -1 \\
2 & -1 & -1
\end{pmatrix}
\begin{pmatrix}
c_1\\
c_2\\
c_3
\end{pmatrix},
$$ that is, if and only if \begin{equation*}
    \begin{split}
        h&=\frac{1}{6}(2c_1+2c_2+2c_3)w_1+\frac{1}{6}(c_2-c_3)w_2+\frac{1}{6}(2c_1-c_2-c_3)w_3\\&=\frac{w_1+w_3}{3}c_1+\frac{2w_1+w_2-w_3}{6}c_2+\frac{2w_1-w_2-w_3}{6}c_3
    \end{split}
\end{equation*} for some $c_1,c_2,c_3\in\mathbb{Z}_3$.
We find a basis for the associated order 
$$\mathfrak{A}_{H_1}=\mathbb{Z}_3\left[\frac{w_1+w_3}{3},\frac{2w_1+w_2-w_3}{6},\frac{2w_1-w_2-w_3}{6}\right].$$
The action of this basis on the basis of $\mathcal{O}_E$ is 
\begin{equation}\label{table4bis}
\begin{tabular}{c|ccc} & \begin{tabular}[c]{@{}l@{}}$1$\end{tabular} & 
\begin{tabular}[c]{@{}l@{}}$\alpha$\end{tabular} &
\begin{tabular}[c]{@{}l@{}}$\alpha^2$\end{tabular}
\\ \hline
$b_1=\dfrac{w_1+w_3}{3}$ & $1$ & $0$& $0$ \\[1ex]
$b_2=\dfrac{2w_1+w_2-w_3}{6}$& $0$& $\alpha$ & $0$ \\[1ex]
$b_3=\dfrac{2w_1-w_2-w_3}{6}$ & $0$ & $0$ & $\alpha^2$ \\
\end{tabular}
\end{equation}

This example is related to the non-classical Hopf Galois structure of Chase and Sweedler (\cite[section 4, pp. 35-39]{chasesweedler}).
The ring $\mathbb{Z}_3[\alpha]$ is a $C_3$-graded 
$\mathbb{Z}_3$-algebra and the associated order 
$\mathfrak{A}_{H_1}$ is 
$\mathbb{Z}_3[C_3]^*$, the dual of the group ring, 
which acts on $\mathcal O_E$ as $b_i\cdot \alpha^j=\delta_{ij}\alpha^j$.

\subsection{$n^2\times n$ matrix attached to a Hopf Galois representation}

%Let $L/K$ be a $H$-Galois extension of degree $n$. 
For $1\le i,j\le n$, let $E_{ij}\in \mathrm{Mat}_n(K)$ be the matrix having $1$ in position $(i,j)$ and $0$ elsewhere. We consider this canonical basis of $\mathrm{Mat}_n(K)$ in the following order: $E_{11},E_{21},\dots E_{n1},E_{12},\dots E_{nn}$. Therefore, we have an isomorphism $\varphi:\mathrm{Mat}_n(K)\to K^{n^2}$
where the image of a matrix becomes the $n^2$ column vector formed by its ordered columns. We have 
$\varphi(E_{ij})=e_{n(j-1)+i}$, vector of the canonical basis of $K^{n^2}$.

\begin{defi}\label{defimatrixaction}  Let $L/K$ be a degree $n$ $H$-Galois field extension.  
Given a $K$-basis $\{\gamma_j\}_{j=1}^{n}$ of $L$ and a $K$-basis $\{w_i\}_{i=1}^{n}$ of $H$, 
as in Section \ref{secmatrixaction} we denote by $w_i\in \mathrm{Mat}_n(K)$ the matrix of $\rho_H(w_i)$ in base $\{\gamma_1,\dots, \gamma_{n}\}$. 
We define the \textbf{matrix of the action of $H$ on $L$} 
%with respect to the $K$-basis $\{w_i\}_{i=0}^{n-1}$ 
as
$$
M(H,L)=\begin{pmatrix}
|& | &\dots  &|  \\
\varphi(w_1)&\varphi(w_2)&\dots &\varphi(w_{n}) \\
|& |&\dots & |\\
\end{pmatrix}\in \mathrm{Mat}_{n^2\times n}(K),
$$
namely, the columns of $M(H,L)$ are obtained transforming into column vectors the matrices of a basis of $H$ acting on a given basis of $L$.
\end{defi}

\begin{pro} Let $L/K$ be a degree $n$ Hopf Galois extension with Hopf algebra $H$. Then, the matrix $M(H,L)$ has rank $n$.
\end{pro}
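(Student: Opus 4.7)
The plan is to exploit the defining isomorphism of a Hopf Galois structure, namely that $j=(1,\rho_H)\colon L\otimes_K H\to \End_K(L)$ is bijective, and to pass linear independence from $L$ to $K$.

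First I would observe that showing $M(H,L)$ has rank $n$ is the same as showing its $n$ columns are $K$-linearly independent in $K^{n^2}$. Since $\varphi\colon\mathrm{Mat}_n(K)\to K^{n^2}$ is a $K$-linear isomorphism, this is equivalent to the matrices $\rho_H(w_1),\dots,\rho_H(w_n)$ being $K$-linearly independent as elements of $\mathrm{Mat}_n(K)\cong \End_K(L)$ (via the chosen basis $\{\gamma_j\}$).

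Next I would endow $L\otimes_K H$ and $\End_K(L)$ with their natural $L$-module structures (the first by $\ell'\cdot(\ell\otimes h)=(\ell'\ell)\otimes h$, the second by composition with the multiplication-by-$\ell'$ map) and note that $j$ is $L$-linear by construction. Both are free $L$-modules of rank $n$: the first has $L$-basis $\{1\otimes w_i\}_{i=1}^n$, and the second has $L$-rank $n$ because $\dim_K\End_K(L)=n^2=n\cdot[L:K]$. Since $j$ is an isomorphism of $L$-modules, the images $\{j(1\otimes w_i)\}=\{\rho_H(w_i)\}$ form an $L$-basis of $\End_K(L)$, and in particular they are $L$-linearly independent.

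Finally, any $K$-linear relation $\sum_{i=1}^n c_i\rho_H(w_i)=0$ with $c_i\in K$ is a fortiori an $L$-linear relation, since $K\subseteq L$; hence all $c_i=0$. This gives the required $K$-linear independence, so the columns of $M(H,L)$ are linearly independent and the rank equals $n$. There is no real obstacle here: the content of the argument is the observation that $L$-linear independence, furnished for free by the Hopf Galois condition, descends immediately to $K$-linear independence, and the formulation through $\varphi$ translates this into the rank statement.
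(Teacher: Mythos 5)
Your proof is correct and rests on the same key input as the paper's: the bijectivity (really, injectivity on $1\otimes H$) of $j=(1,\rho_H)$ forces the matrices $\rho_H(w_1),\dots,\rho_H(w_n)$ to be linearly independent, which via $\varphi$ is exactly the rank statement. The paper asserts the $K$-linear independence directly, while you take a small detour through $L$-linear independence and then descend to $K$; this is a harmless elaboration of the same argument.
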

\begin{proof}
Since $w_1,\dots,w_{n}$ is a $K$-basis of $H$ and $j:L\otimes_K H\to \End_K(L)$ is an isomorphism, the corresponding matrices $w_i$ are $K$-linearly independent and the same happens to the vectors $\varphi(w_i)$, which are the columns of $M(H,L)$.
\end{proof}

This is a convenient way to store all the information of the Hopf action in a single object. If we think of it as a  matrix defined by blocks  
$$M(H,L)=\begin{pmatrix}
\\[-1ex]
M_1(H,L) \\[1ex] 
\hline\\[-1ex]
\cdots \\[1ex]
\hline \\[-1ex]
M_{n}(H,L)\\
\\[-1ex]
\end{pmatrix}$$
then the block $M_j(H,L)$ provides the action of the Hopf algebra on the element $\gamma_j$ of the basis, namely it is the matrix of the $K$-linear map
$$
\begin{array}{ccc}
     H&\to& L  \\
     h&\mapsto& h\cdot \gamma_j
\end{array}
$$
in the chosen bases of $H$ and $L$.

\subsection{Computing a basis of ${\mathfrak A}_H$}

Let us return to the problem of basis computation for the 
associated order $\mathfrak{A}_H$ attached to a Hopf Galois structure of an extension $L/K$ with the hypothesis and notations stated at the beginning of this section, which ensure the existence of an integral basis.

We fix again a $K$-basis $\{w_i\}_{i=1}^{n}$ of $H$, but now we take an $\mathcal{O}_K$-basis $\{\gamma_j\}_{j=1}^{n}$ of $\mathcal{O}_L$. This is, in particular, a $K$-basis of $L$.

\begin{teo}\label{teomatrixassocorder} Let $h=\sum_{i=1}^{n}h_iw_i\in H$, $h_i\in K$. Then, $$h\in\mathfrak{A}_H \iff  M(H,L)\begin{pmatrix}h_1 \\ \vdots \\ h_{n}\end{pmatrix}\in\mathcal{O}_K^{n^2}.$$
\end{teo}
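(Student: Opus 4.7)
The plan is to unpack the definitions and reduce the condition $h\in\mathfrak{A}_H$ to a block-wise reading of the vector $M(H,L)\,\mathbf{h}$, where $\mathbf{h}=(h_1,\dots,h_n)^T$.

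First, I would use that $\{\gamma_j\}_{j=1}^n$ is an $\mathcal{O}_K$-basis of $\mathcal{O}_L$ together with the $K$-linearity of the Hopf action in the second variable: every $x\in\mathcal{O}_L$ is an $\mathcal{O}_K$-linear combination of the $\gamma_j$, and $\mathcal{O}_L$ is an $\mathcal{O}_K$-module, so $h\cdot x\in\mathcal{O}_L$ for all $x\in\mathcal{O}_L$ is equivalent to $h\cdot\gamma_j\in\mathcal{O}_L$ for every $j=1,\dots,n$. This reduces the membership condition for $\mathfrak{A}_H$ to checking it on the finite basis $\{\gamma_j\}$.

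Next I would identify the $j$-th block of $M(H,L)\,\mathbf{h}$ with the coordinate vector of $h\cdot\gamma_j$ in the basis $\{\gamma_j\}$. Since $h\cdot\gamma_j=\sum_{i=1}^n h_i(w_i\cdot\gamma_j)$, and by definition the $j$-th column of the matrix of $\rho_H(w_i)$ collects the coordinates of $w_i\cdot\gamma_j$ in $\{\gamma_j\}$, the ordering convention $\varphi(E_{ij})=e_{n(j-1)+i}$ places those coordinates precisely at positions $n(j-1)+1,\dots,nj$ of $\varphi(w_i)$. These positions form the $i$-th column of the block $M_j(H,L)$, so $M_j(H,L)\,\mathbf{h}=\sum_{i=1}^n h_i\cdot(i\text{-th column of }M_j(H,L))$ is the coordinate vector of $h\cdot\gamma_j$.

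Combining the two steps, $h\in\mathfrak{A}_H$ iff the coordinates of each $h\cdot\gamma_j$ in the $\mathcal{O}_K$-basis $\{\gamma_j\}$ of $\mathcal{O}_L$ lie in $\mathcal{O}_K$, iff $M_j(H,L)\,\mathbf{h}\in\mathcal{O}_K^n$ for every $j$, iff $M(H,L)\,\mathbf{h}\in\mathcal{O}_K^{n^2}$. The argument is essentially a translation of definitions; the only point requiring care is the index bookkeeping induced by $\varphi$, which aligns column-by-column the vector $M(H,L)\,\mathbf{h}$ with the concatenated coordinate vectors of $h\cdot\gamma_1,\dots,h\cdot\gamma_n$. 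No genuine obstacle is expected beyond this routine verification.
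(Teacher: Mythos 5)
Your proof is correct and follows essentially the same route as the paper: the paper likewise expands $h\cdot x=\sum_j x_j\,M_j(H,L)\,\mathbf h$ and reads off the block condition $M_j(H,L)\,\mathbf h\in\mathcal{O}_K^{n}$ for each $j$. Your explicit reduction to the basis elements $\gamma_j$ and the index bookkeeping for $\varphi$ are just a more detailed unpacking of the same argument.
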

\begin{proof}
By definition, $h\in\mathfrak{A}_H$ if and only if $h\cdot x\in\mathcal{O}_L$ for all $x\in\mathcal{O}_L$. Fix $x\in\mathcal{O}_L$. Since $\{\gamma_j\}_{j=1}^{n}$ is an $\mathcal{O}_K$-basis of $\mathcal{O}_L$, we can write $x=\sum_{j=1}^{n}x_j\gamma_j$, with $x_j\in\mathcal{O}_K$, $1\leq j\leq n$. We compute \begin{equation*}
        h \cdot x=\left(\sum_{i=1}^{n}h_iw_i\right)\cdot\left(\sum_{j=1}^{n}x_j\gamma_j\right)=\sum_{i=1}^{n}\sum_{j=1}^{n}h_ix_jw_i\cdot\gamma_j=
        \sum_{j=1}^{n}x_jM_j(H,L)\begin{pmatrix}
        h_1 \\ \vdots \\ h_{n}\end{pmatrix}.
\end{equation*}
Therefore, $h\in\mathfrak{A}_H$ if and only if
$$M_j(H,L)\begin{pmatrix}
        h_1 \\ \vdots \\ h_{n}\end{pmatrix} 
        \in\mathcal{O}_K^{n}$$
for all $1\le j\le n$, which is equivalent to the condition of the
statement. 
\end{proof} 

In order to characterize elements $h=\sum_{i=1}^{n}h_iw_i\in\mathfrak{A}_H$, we look for an expression for the coordinate vector $(h_1,...,h_{n})$. The previous result says that it becomes a vector of integers when multiplied by $M(H,L)$. If we could replace $M(H,L)$ with an invertible matrix
 we would be able to express such vector as the inverse matrix applied to a vector of integers. To this end we can take for example the Hermite normal form of the integral matrix obtained from $M(H,L)$ removing common denominators.

\begin{teo}[Hermite normal form]\label{hermitenormalform}
Let $A$ be a PID and $M\in M_{m\times n}(A)$ a matrix of rank $n$. Then, there exists an $m\times m$ unimodular matrix $U$, that is $U\in\GL_m(A)$, such that $UM$ is a matrix in Hermite normal form (in particular in row echelon form). 
\end{teo}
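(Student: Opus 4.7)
My plan is to give an algorithmic/inductive proof, reducing $M$ to Hermite normal form one column at a time by left-multiplying by unimodular matrices.

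The key technical ingredient is a Bezout reduction on pairs of rows. Given $a,b\in A$ not both zero with $d=\gcd(a,b)$, by the PID hypothesis one can find $x,y\in A$ with $xa+yb=d$; then the matrix
\[
B=\begin{pmatrix} x & y \\ -b/d & a/d \end{pmatrix}
\]
lies in $\GL_2(A)$ because $\det(B)=x(a/d)+y(b/d)=1$, and $B\binom{a}{b}=\binom{d}{0}$. Embedding $B$ into the identity matrix at rows $i,j$ gives an element of $\GL_m(A)$ that replaces an entry by the gcd of two entries of a column and zeros out the other. Row permutations and additions of $A$-multiples of one row to another are likewise realized by unimodular matrices.

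The plan is then to argue by induction on the number of columns $n$. For the base step $n=1$, the rank hypothesis guarantees a nonzero entry; iterating the Bezout reduction on the first column produces a unimodular $U_1$ with $U_1 M=(d_1,0,\dots,0)^T$ where $d_1$ generates the ideal of entries, giving the Hermite form. For the inductive step, applying the same procedure to the first column of $M$ yields a unimodular matrix that kills every entry of the first column below the $(1,1)$ entry, replacing it by the gcd $d_1$ of that column; note $d_1\neq 0$ since otherwise the first column would vanish, contradicting $\operatorname{rank}(M)=n$. After this reduction, writing the result as
\[
\begin{pmatrix} d_1 & v \\ 0 & M' \end{pmatrix}
\]
with $M'\in\mathrm{Mat}_{(m-1)\times(n-1)}(A)$, the rank of $M'$ must be $n-1$. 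By induction there is a unimodular $U'\in\GL_{m-1}(A)$ putting $M'$ in Hermite form; extending $U'$ by a $1$ in position $(1,1)$ yields a unimodular matrix in $\GL_m(A)$ whose product with the reduced matrix is in row echelon form. Finally, to enforce the Hermite normalization conditions (each pivot a fixed representative of its associate class, entries above a pivot reduced modulo it), one chooses a system of coset representatives in $A/(d_i)$ for each pivot $d_i$ and performs $A$-linear row reductions upward; this is again a product of unimodular transvections.

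The main obstacle I anticipate is the Bezout step, since $A$ is only a PID and not assumed Euclidean, so one cannot literally run the Euclidean algorithm; however Bezout's identity holds in any PID, which is exactly what is used, so the algorithm still terminates in finitely many steps (each application strictly decreases the number of nonzero entries in the column being processed, or replaces entries by proper divisors, and $A$ being a PID is in particular Noetherian so infinite chains of proper divisors are impossible). The rank hypothesis is used at each inductive step to guarantee the existence of a nonzero pivot, which is what is needed later when $M=M(H,L)$ has rank $n$.
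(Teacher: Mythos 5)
Your proof is correct: the Bezout matrix $B$ is indeed unimodular and clears a pair of column entries to $(\gcd,0)$, the rank hypothesis guarantees a nonzero pivot at each stage, and the induction plus the final upward normalization yields the Hermite form (your worry about termination is moot, since each Bezout step removes one nonzero entry outright, so the first column is cleared in at most $m-1$ steps without appealing to Noetherianness). The paper gives no proof of its own here --- it simply cites Adkins--Weintraub --- and the textbook argument is essentially the one you have written out.
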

\begin{proof}
See \cite[Chapter 5, Theorem 3.1]{adkinsweintraub}.
\end{proof}

Since $K$ is the field of fractions of $\mathcal{O}_K$, we can  write
$M(H,L)=d_M\, M$ with $d_M\in K$ and $M\in  M_{m\times n}(\mathcal{O}_K)$ such that coefficients in $M$ are coprime. Let $U\in\GL_{n^2}(\mathcal{O}_K)$ an unimodular matrix such that $UM$ is in Hermite normal form, and let
$D\in \mathrm{Mat}_n(\mathcal{O}_K)$ the matrix obtained deleting the zero rows of $UM$. This matrix belongs to $\GL_n(K)$. Then, 
\begin{equation*}
    \begin{split}
        h\in\mathfrak{A}_H &\iff  M(H,L)\begin{pmatrix}h_1 \\ \vdots \\ h_{n}\end{pmatrix}\in\mathcal{O}_K^{n^2}
    \iff  d_M\,  M\begin{pmatrix}h_1 \\ \vdots \\ h_{n}\end{pmatrix}\in\mathcal{O}_K^{n^2}\\
    &\iff  d_M\, UM\begin{pmatrix}h_1 \\ \vdots \\ h_{n}\end{pmatrix}\in\mathcal{O}_K^{n^2}\iff d_M D \begin{pmatrix}h_1 \\ \vdots \\ h_{n}\end{pmatrix}\in\mathcal{O}_K^{n}\\
     &\iff  \begin{pmatrix}h_1 \\ \vdots \\ h_{n}\end{pmatrix}
     =\dfrac{1}{d_M} D^{-1}\begin{pmatrix}c_1 \\ \vdots \\ c_{n}\end{pmatrix}
    \end{split}
\end{equation*}
 for some $c_1,\dots,c_{n}$ in $\mathcal{O}_K$.

\begin{teo}\label{basisassocorder} If $\dfrac{1}{d_M} D^{-1}=(d_{ij})_{i,j=1}^{n}$, then the elements 
$$v_i=\sum_{l=1}^{n}d_{li}w_l,\quad 0\leq i\leq n-1$$ form an $\mathcal{O}_K$-basis of $\mathfrak{A}_H$. Namely, the columns of 
$\dfrac{1}{d_M} D^{-1}$ are coordinates of basis elements for $\mathfrak{A}_H$ in the fixed basis of $H$.
The action of $v_i\in\mathfrak{A}_H$ on $\mathcal{O}_L$ is given by
$$
v_i\cdot\gamma_j=M_j(H,L)\begin{pmatrix}d_{1i} \\ \vdots \\ d_{ni}\end{pmatrix}
$$
where resulting coordinates refer to the chosen 
$\mathcal{O}_K$-basis of $\mathcal{O}_L$.
\end{teo}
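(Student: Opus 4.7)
The plan is to read off both the basis claim and the action formula directly from the chain of equivalences established just before the statement, once that chain is rephrased in terms of the columns of $\frac{1}{d_M}D^{-1}$. Essentially no new idea is needed; the whole work was done in setting up $D$.

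First I would invoke the final equivalence preceding the theorem: $h=\sum_{l=1}^{n}h_l w_l\in\mathfrak{A}_H$ iff there exist $c_1,\ldots,c_n\in\mathcal{O}_K$ with
\[
\begin{pmatrix}h_1\\ \vdots\\ h_n\end{pmatrix}=\frac{1}{d_M}D^{-1}\begin{pmatrix}c_1\\ \vdots\\ c_n\end{pmatrix}.
\]
Reading this equation row by row gives $h_l=\sum_{i=1}^{n}d_{li}c_i$, so substituting into $h=\sum_l h_l w_l$ and swapping the order of summation yields
\[
h=\sum_{i=1}^{n}c_i\Bigl(\sum_{l=1}^{n}d_{li}w_l\Bigr)=\sum_{i=1}^{n}c_i v_i.
\]
This shows that every element of $\mathfrak{A}_H$ is an $\mathcal{O}_K$-linear combination of $v_1,\ldots,v_n$ and, conversely, that any such combination lies in $\mathfrak{A}_H$. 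Hence the $v_i$ span $\mathfrak{A}_H$ over $\mathcal{O}_K$.

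For $\mathcal{O}_K$-linear independence I would note that $\frac{1}{d_M}D^{-1}\in\GL_n(K)$ (since $D\in\GL_n(K)$ and $d_M\in K^{\times}$), so the $v_i$ are obtained from the $K$-basis $\{w_l\}$ of $H$ by an invertible change of coordinates; they are therefore $K$-linearly independent in $H$, and a fortiori $\mathcal{O}_K$-linearly independent. Combined with the previous paragraph this gives the basis statement.

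For the action formula I would simply unfold linearity and use the defining property of the blocks $M_j(H,L)$ established by Theorem \ref{teomatrixassocorder}: $M_j(H,L)$ is the matrix, in the bases $\{w_l\}$ of $H$ and $\{\gamma_1,\ldots,\gamma_n\}$ of $\mathcal{O}_L$, of the map $h\mapsto h\cdot\gamma_j$. Since $v_i$ has coordinate column $(d_{1i},\ldots,d_{ni})^T$ in $\{w_l\}$, applying $M_j(H,L)$ to this column produces precisely the coordinates of $v_i\cdot\gamma_j$ in $\{\gamma_1,\ldots,\gamma_n\}$, which is the claimed formula.

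There is no real obstacle, only bookkeeping: one must keep the convention $v_i=\sum_l d_{li}w_l$ (the $i$-th column of $\frac{1}{d_M}D^{-1}$ encodes the coordinates of $v_i$), and check that both the spanning and independence assertions survive the passage between the invertible matrix $\frac{1}{d_M}D^{-1}$ over $K$ and the $\mathcal{O}_K$-module structure; this is automatic from the equivalence above because that equivalence is a bijection between coefficient vectors $(h_1,\ldots,h_n)$ parametrising elements of $\mathfrak{A}_H$ and arbitrary vectors $(c_1,\ldots,c_n)\in\mathcal{O}_K^n$.
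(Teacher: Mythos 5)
Your proposal is correct and follows essentially the same route as the paper: spanning is read off from the chain of equivalences preceding the theorem, $\mathcal{O}_K$-independence follows from $\frac{1}{d_M}D^{-1}\in\GL_n(K)$ together with $\{w_l\}$ being a $K$-basis, and the action formula is immediate from the interpretation of $M_j(H,L)$ as the matrix of $h\mapsto h\cdot\gamma_j$. No gaps.
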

\begin{proof}
Let $h=\sum_{l=1}^{n}h_l w_l\in H$. By Theorem \ref{hermitenormalform}, 
$$
h\in\mathfrak{A}_H
\iff
\begin{pmatrix}h_1 \\ \vdots \\ h_{n}\end{pmatrix}
     =\dfrac{1}{d_M} D^{-1}\begin{pmatrix}c_1 \\ \vdots \\ c_{n}\end{pmatrix}
 \iff
 h=\sum_{i=1}^{n}c_i\begin{pmatrix}d_{1i} \\ \vdots \\ d_{ni}\end{pmatrix} \iff h\in\langle v_1,...,v_{n}\rangle_{\mathcal{O}_K}.
 $$
Then $\{v_1,...,v_{n}\}$ is a system of generators of $\mathfrak{A}_H$. Now, it is $K$-linearly independent because $\dfrac{1}{d_M} D^{-1}\in \GL_n(K)$ and $\{w_i\}_{i=1}^{n}$ is a $K$-basis, so it is also $\mathcal{O}_K$-linearly independent and hence an $\mathcal{O}_K$-basis for $\mathfrak{A}_H$.

As we said before, $M_j(H,L)$ is the matrix of the $K$-linear map
$$
\begin{array}{ccc}
     H&\to& L  \\
     h&\mapsto& h\cdot \gamma_j
\end{array}
$$
in the chosen basis. Therefore, in order to obtain 
$v_i\cdot\gamma_j$ we just need the coordinates of $v_i$ in basis
$w_1,\dots,w_n$ and these are precisely the elements of the $i$-th column of $\dfrac{1}{d_M} D^{-1}$.
\end{proof} 

\begin{rmk}
We haven't made use of the upper triangular nature of $\dfrac{1}{d_M} D^{-1}$ because the result is independent from that. For any $D_U\in \mathrm{Mat}_n(K)$ such that 
$$
U\, M(H,L)=\begin{pmatrix}
D_U\\ \hline \\[-2ex] O
\end{pmatrix}
$$
with $U\in GL_{n^2}(\mathcal{O}_K)$, we will have $D_U\in \GL_n(K)$ and the columns of $D_U^{-1}$ will provide an $\mathcal{O}_K$-basis for the associated order. 
\end{rmk}

\begin{example}\label{quadratic}
Let $L=K(\sqrt a\,)$ be a quadratic Galois extension.
Then, its unique Hopf Galois structure is the classical one, which has $H=K[1,\sigma]$ with $\sigma(\sqrt a\, )=-\sqrt a$.  Consider the basis $\{1,\sigma\}$ of $H$ and assume that $\{1,\sqrt{a}\}$ is an integral basis, then the matrix of the action is
$$M(H,L)=\begin{pmatrix}
1 & 1 \\
0 & 0 \\ 
0 & 0 \\
1 & -1 \\
\end{pmatrix}.
$$
With a permutation of rows, we get 
$$D_U=
\begin{pmatrix}
 1&1\\ 1&-1
 \end{pmatrix}.
$$
and the columns of the inverse matrix provide the basis 
of the associated order (which is the maximal order) of orthogonal idempotents
$$
\dfrac{1+\sigma}{2}, \dfrac{1-\sigma}{2}.
$$
On the other hand, computation of the Hermite normal form gives 
$$
D=
\begin{pmatrix}
 1&1\\ 0&2
 \end{pmatrix}.
$$
which provides the basis $1,\dfrac{-1+\sigma}{2}$. If we transform $D$ into  
$$
\begin{pmatrix}
 1&-1\\ 0&2
 \end{pmatrix}.
$$
we obtain basis $1,\dfrac{1+\sigma}{2}$.
\end{example}

\begin{example}\label{exeigQ3m}
In the above motivating example, we took a matrix $U$ which was just a permutation matrix acting on the rows of $M(H,L)$ and compute a basis using the corresponding $D_U$.  We have $d_M=1$ and when we compute the Hermite normal form of $M(H,L)$ we obtain 
$$
D=\begin{pmatrix}
1&0&2\\0&3&3\\0&0&6
\end{pmatrix}\,,
\qquad D^{-1}=
\begin{pmatrix}
 1&0&-1/3\\ 0&1/3&-1/6
\\ 0&0&1/6
\end{pmatrix}.
$$
Therefore, another basis for the associated order would be
$$
w_1,\  \dfrac13 w_2, \  -\dfrac16(2w_1+w_2-w_3).
$$
If we perform further reduction steps on matrix $D$
$$
\begin{pmatrix}
1&0&2\\
0&3&3\\
0&0&6
\end{pmatrix}\to 
\begin{pmatrix}
1&0&2\\
0&3&3\\
0&0&3
\end{pmatrix}\to 
\begin{pmatrix}
1&0&-1\\
0&3&0\\
0&0&3
\end{pmatrix}
$$
we obtain yet another basis: 
$$
w_1,\  \dfrac{w_2}3, \  \dfrac{w_1+w_3}3.
$$
\end{example}

\begin{example}\label{exeigQ3'}
Now we consider $E=\mathbb{Q}_3(\alpha)$ with $\alpha$ a root of $f(x)=x^3+3x^2+3\in\mathbb{Q}_3[x]$. As in Example \ref{exeigQ3}, the normal closure $L/\mathbb{Q}_3$ is dihedral of degree $6$, and the cubic extension $E/\mathbb{Q}_3$ has an unique Hopf Galois structure with Hopf algebra $H_1$ generated by elements $w_1$, $w_2$ and $w_3$ defined in a similar way; namely, we take a $3$-cycle $r$ of $\mathrm{Gal}(L/\mathbb{Q}_3)$ and $z=\sqrt{-1}$. With respect to this $\mathbb{Q}_3$-basis of $H_1$ and the $\mathbb{Q}_3$-basis of $L$ corresponding to the powers of $\alpha$, the action of $H_1$ on $E$ is given by
\begin{equation}\label{table5}
\begin{tabular}{c|ccc} & \begin{tabular}[c]{@{}l@{}}$1$\end{tabular} & 
\begin{tabular}[c]{@{}l@{}}$\alpha$\end{tabular} &
\begin{tabular}[c]{@{}l@{}}$\alpha^2$\end{tabular}
\\ \hline
$w_1$ & $1$ & $\alpha$ & $\alpha^2$ \\
$w_2$ & $0$ & $3+9\alpha+2\alpha^2$ & $-3-30\alpha-9\alpha^2$ \\
$w_3$ & $2$ & $-3-\alpha$ &$9-\alpha^2$ \\
\end{tabular},
\end{equation}
Since $\alpha$ is a root of a $3$-Eisenstein polynomial, powers of $\alpha$ give a $\mathbb{Z}_3$-basis of $\mathcal{O}_E$.  We can apply the previous method to compute the associated order $\mathfrak{A}_{H_1}$. We have
$$
\begin{pmatrix}1&0&0&0&0&0&0&0&0
\\0&0&0&-1&0&2&0&0&0\\ 0&0&0&-2&0
&3&0&0&0\\0&1&0&0&0&0&0&0&0\\0&0
&1&0&0&0&0&0&0\\ -1&0&0&-1&1&-3&0&0&0
\\ 0&0&0&3&0&-3&1&0&0\\
0&0&0&0&0&
15&0&1&0\\ 
-1&0&0&-1&0&6&0&0&1\end {pmatrix}
\begin{pmatrix}
1 & 0 & 2 \\
0 & 0 & 0 \\
0 & 0 & 0 \\ 
0 & 3 & -3 \\
1 & 9 & -1 \\
0 & 2 & 0 \\ 
0 & -3 & 9 \\ 
0 & -30 & 0 \\ 
1 & -9 & -1
\end{pmatrix}=
\begin{pmatrix}
 1&0&2\\ 
 0&1&3\\ 
 0&0&6\\ 
 0&0&0\\
 0&0&0\\
  0&0&0\\
 0&0&0\\
  0&0&0\\
 0&0&0\\
\end{pmatrix}.$$ 
Hence,
$$\mathfrak{A}_{H_1}=\mathbb{Z}_3\left[w_1,w_2,
\frac{-2w_1-3w_2+w_3}{6}\right].$$
If we perform further reduction steps on matrix $D$
$$
\begin{pmatrix}
1&0&2\\
0&1&3\\
0&0&6
\end{pmatrix}\to 
\begin{pmatrix}
1&0&2\\
0&1&3\\
0&0&3
\end{pmatrix}\to 
\begin{pmatrix}
1&0&-1\\
0&1&0\\
0&0&3
\end{pmatrix}
$$
we obtain  
$$\mathfrak{A}_{H_1}=\mathbb{Z}_3\left[w_1,w_2,
\dfrac{w_1+w_3}{3}\right].$$
\end{example}

\section{Freeness of the ring of integers over the associated order}\label{secfreeness}

We keep the hypothesis of Section \ref{sectionintegral}. In this part we deal with the problem of the structure of $\mathcal{O}_L$ as $\mathfrak{A}_H$-module. Namely, we would like to find conditions for the freeness of such module.

\subsection{Characterization of freeness}

Let $H$ be the Hopf algebra acting on $L$ and let $\{w_i\}_{i=1}^{n}$ be a $K$-basis of $H$ and $\{\gamma_j\}_{j=1}^{n}$ an $\mathcal{O}_K$-basis of $\mathcal{O}_L$. We proved in Theorem \ref{basisassocorder} that the elements $$v_i=\sum_{l=1}^{n}d_{li}w_l,\,1\leq i\leq n$$ form an $\mathcal{O}_K$-basis of $\mathfrak{A}_H$. Standard linear algebra yields that an element $\beta\in\mathcal{O}_L$ is a free generator of $\mathcal{O}_L$ as $\mathfrak{A}_H$-module if and only if the elements $v_i\cdot\beta\in\mathcal{O}_L$ form an $\mathcal{O}_K$-basis of $\mathcal{O}_L$, that is, such set is both $\mathcal{O}_K$-linearly independent and an $\mathcal{O}_K$-system of generators of $\mathcal{O}_L$.

For a given element $\beta\in\mathcal{O}_L$, we denote by $\mathcal{D}_{\beta}(H,L)$ the matrix whose $i$-th column are the coordinates of $v_i\cdot\beta$ with respect to the fixed $\mathcal{O}_K$-basis of $\mathcal{O}_L$. That is, if 
$\beta=\sum_{j=1}^{n}\beta_j\gamma_j$ and $D^{-1}$ is the matrix whose 
columns provide the basis of the associated order, then
$$
\mathcal{D}_{\beta}(H,L)=\sum_{j=1}^{n}\beta_jM_j(H,L)D^{-1}.
$$

It is trivial that  $\det{\mathcal{D}_{\beta}(H,L)}\neq0$ is a necessary condition for the elements $v_i\cdot\beta$ being an $\mathfrak{A}_H$-basis of $\mathcal{O}_L$. However, it does not imply that $\{v_i\cdot\beta\}_{i=1}^{n}$ is a system of generators. What is true is that it is equivalent to the unimodularity of $\mathcal{D}_{\beta}(H,L)$. In order to prove this, we assume that $\mathcal{D}_{\beta}(H,L)$ is unimodular and we recall the following.

\begin{pro} Let $R$ be a noetherian integral domain with field of fractions $K$ and let $A\subset B$ be $R$-algebras which are $R$-orders in a finite separable $K$-algebra $L$. Then $A=B$ if and only if $\mathrm{disc}_R(A)=\mathrm{disc}_R(B)$.
\end{pro}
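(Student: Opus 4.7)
The forward implication is immediate from the definition: if $A = B$, the same family of trace determinants generates both discriminant ideals. The real content is the converse, and the strategy is to localize at each prime of $R$ and reduce the problem to the standard transition-matrix computation for lattices inside a fixed vector space.

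Both the conclusion $A = B$ and the hypothesis $\mathrm{disc}_R(A) = \mathrm{disc}_R(B)$ are compatible with localization, so after replacing $R$ by $R_{\mathfrak{p}}$ at a prime $\mathfrak{p}$ I may assume $R$ is a noetherian local domain. In the setting actually needed in the paper, where $R = \mathcal{O}_K$ is a principal ideal domain, $R_{\mathfrak{p}}$ is a discrete valuation ring, and the $R$-orders $A$ and $B$ (being torsion-free and finitely generated) become free of rank $n = \dim_K L$ over this local ring.

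Once freeness is available, I would choose $R$-bases $\{a_1, \ldots, a_n\}$ of $A$ and $\{b_1, \ldots, b_n\}$ of $B$ and use $A \subseteq B$ to write $a_i = \sum_{j} T_{ij} b_j$ with $T \in \mathrm{Mat}_n(R)$. The Gram matrices of the trace pairing then satisfy
$$
\bigl(\mathrm{Tr}_{L/K}(a_i a_j)\bigr)_{i,j} = T \cdot \bigl(\mathrm{Tr}_{L/K}(b_k b_l)\bigr)_{k,l} \cdot T^{t},
$$
and taking determinants yields the key identity $\mathrm{disc}_R(A) = (\det T)^2\,\mathrm{disc}_R(B)$ as ideals of $R$. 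Since $L/K$ is separable the trace form is non-degenerate, so $\mathrm{disc}_R(B)$ is a nonzero ideal and the hypothesis forces $(\det T)^2$, hence $\det T$ (as $R$ is a domain), to be a unit in $R$. Then $T \in \mathrm{GL}_n(R)$, the chosen bases are related by a unit matrix, and the reverse inclusion $B \subseteq A$ follows, concluding the proof.

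The principal obstacle is the freeness assumption: over a general noetherian local domain the orders need not be free over $R$, and a basis-free reformulation would be required, for example comparing the Fitting ideals of $\mathrm{Hom}_R(A, R)/\phi_A(A)$, where $\phi_A\colon A \to \mathrm{Hom}_R(A, R)$ is induced by the trace pairing. In the PID framework where Section \ref{secfreeness} operates this difficulty does not arise, and the transition-matrix calculation above delivers the result directly.
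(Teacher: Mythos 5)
The paper does not actually prove this proposition: it is quoted verbatim from Childs and disposed of with ``See \cite[(22.4)]{childs}'', so there is no in-paper argument to match yours against. On its own merits, your argument is the standard one and it is correct in the setting where the paper invokes the result. The forward direction is trivial; for the converse, the transition-matrix computation $\bigl(\mathrm{Tr}(a_ia_j)\bigr)=T\bigl(\mathrm{Tr}(b_kb_l)\bigr)T^{t}$, hence $\mathrm{disc}_R(A)=(\det T)^2\,\mathrm{disc}_R(B)$, together with the non-degeneracy of the trace form of the separable algebra $L$ (which guarantees $\mathrm{disc}_R(B)\neq 0$), forces $\det T\in R^{*}$ and so $B\subseteq A$. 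Note that this is precisely the free-module instance of the identity the paper uses immediately afterwards, $\mathrm{disc}_{\mathcal{O}_K}(M)=[N:M]^2\,\mathrm{disc}_{\mathcal{O}_K}(N)$ from \cite[(22.2)]{childs}: your $(\det T)$ is the generalized module index $[B:A]$, and Childs's cited proof is exactly this computation in its basis-free packaging.

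Your closing caveat is well placed and worth keeping. Over a general noetherian local domain the orders need not be free, so the transition matrix does not exist, and the statement as transcribed (arbitrary noetherian integral domain) is in fact delicate: for $R$ not integrally closed one can already run into trouble in rank one, where every order containing $1$ has unit discriminant ideal under the ``determinants of trace Gram matrices of $n$-tuples'' definition. So the honest restriction to the PID/Dedekind case (where localization gives a DVR and freeness is automatic) is not a cosmetic simplification but the natural domain of validity of the argument --- and it is all the paper needs, since Sections 3--5 work throughout over a principal ideal domain $\mathcal{O}_K$. If you wanted to go beyond that, the route is the one you sketch (a basis-free index via Fitting ideals) or simply to adopt the module-index formulation of \cite[(22.2)]{childs} as the primary statement.
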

\begin{proof}
See \cite[(22.4)]{childs}.
\end{proof}

In our case,  $\mathfrak{A}_H\cdot\beta\subset\mathcal{O}_L$, and whenever the $v_i\cdot\beta$ are $\mathcal{O}_K$-linearly independent $\mathfrak{A}_H\cdot\beta$ is an $\mathcal{O}_K$-order of $L$. This is the case assuming $\mathcal{D}_{\beta}(H,L)\in\GL_n(\mathcal{O}_K)$. By \cite[(22.2)]{childs}, $$\mathrm{disc}_{\mathcal{O}_K}(\mathfrak{A}_H\cdot\beta)=[\mathcal{O}_L:\mathfrak{A}_H\cdot\beta]^2\, \mathrm{disc}_{\mathcal{O}_K}(\mathcal{O}_L),$$ where $[\mathcal{O}_L:\mathfrak{A}_H\cdot\beta]$ is the generalized module index of the free $\mathcal{O}_K$-modules $\mathcal{O}_L$ and $\mathfrak{A}_H\cdot\beta$ as defined in \cite[Page 10]{cassels-frohlich}. Then, the two discriminants coincide if and only if the index $[\mathcal{O}_L:\mathfrak{A}_H\cdot\beta]$ is in $\mathcal{O}_K^*$. We compute the index by taking the $\mathcal{O}_K$-bases $\{v_i\cdot\beta\}_{i=1}^{n}$ of $\mathfrak{A}_H\cdot\beta$ and $\{\gamma_j\}_{j=1}^{n}$ of $\mathcal{O}_L$, and we find that $[\mathcal{O}_L:\mathfrak{A}_H\cdot\beta]=\mathrm{det}(\mathcal{D}_{\beta}(H,L))$, which belongs to $\mathcal{O}_K^*$. Conversely, if $\{v_i\cdot\beta\}_{i=0}^{n-1}$ is an $\mathcal{O}_K$-basis of $\mathcal{O}_L$, then $\mathcal{D}_{\beta}(H,L)$ is a change of basis matrix, so it is unimodular. Hence, we have the following criterion.

\begin{pro} An element $\beta\in\mathcal{O}_L$ is an $\mathfrak{A}_H$-free generator of $\mathcal{O}_L$ if and only if the associated matrix $\mathcal{D}_{\beta}(H,L)$ is unimodular.
\end{pro}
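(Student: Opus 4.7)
My plan is to reorganize the informal argument that already precedes the statement into a clean two-direction proof, relying on the discriminant criterion and on the interpretation of $\mathcal{D}_{\beta}(H,L)$ as a change-of-basis matrix.

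For the forward direction I would be brief: if $\beta$ is an $\mathfrak{A}_H$-free generator, then by Theorem \ref{basisassocorder} the family $\{v_i\cdot\beta\}_{i=1}^{n}$ is an $\mathcal{O}_K$-basis of $\mathcal{O}_L$. By the very definition of $\mathcal{D}_{\beta}(H,L)$, its $i$-th column records the coordinates of $v_i\cdot\beta$ in the fixed $\mathcal{O}_K$-basis $\{\gamma_j\}_{j=1}^{n}$ of $\mathcal{O}_L$, so $\mathcal{D}_{\beta}(H,L)$ is the change-of-basis matrix between two $\mathcal{O}_K$-bases of the same free $\mathcal{O}_K$-module, hence unimodular.

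The converse is where the real content sits, and this is what I would emphasize. Suppose $\mathcal{D}_{\beta}(H,L)\in\GL_n(\mathcal{O}_K)$. Since its determinant is a unit of $\mathcal{O}_K$, in particular nonzero, the columns are $K$-linearly independent, which gives that $\{v_i\cdot\beta\}_{i=1}^{n}$ is $\mathcal{O}_K$-linearly independent inside $\mathcal{O}_L$. Because $\{v_i\}_{i=1}^{n}$ is an $\mathcal{O}_K$-basis of $\mathfrak{A}_H$, this says that the natural map $\mathfrak{A}_H\to\mathfrak{A}_H\cdot\beta$, $v\mapsto v\cdot\beta$ is an $\mathcal{O}_K$-module isomorphism; in particular $\mathfrak{A}_H\cdot\beta$ is a free $\mathcal{O}_K$-submodule of $\mathcal{O}_L$ of full rank $n$, and therefore an $\mathcal{O}_K$-order in $L$ contained in $\mathcal{O}_L$.

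To upgrade this containment to equality I would use the discriminant proposition recalled just above the statement, together with \cite[(22.2)]{childs}:
\begin{equation*}
\mathrm{disc}_{\mathcal{O}_K}(\mathfrak{A}_H\cdot\beta)=[\mathcal{O}_L:\mathfrak{A}_H\cdot\beta]^{2}\,\mathrm{disc}_{\mathcal{O}_K}(\mathcal{O}_L).
\end{equation*}
Computing the generalized module index with respect to the $\mathcal{O}_K$-bases $\{v_i\cdot\beta\}_{i=1}^{n}$ and $\{\gamma_j\}_{j=1}^{n}$ gives $[\mathcal{O}_L:\mathfrak{A}_H\cdot\beta]=\det\mathcal{D}_{\beta}(H,L)\in\mathcal{O}_K^{*}$, so the two discriminants agree and the proposition forces $\mathfrak{A}_H\cdot\beta=\mathcal{O}_L$, i.e.\ $\beta$ is an $\mathfrak{A}_H$-free generator.

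The only potential obstacle is the logical step between \emph{$\mathcal{D}_{\beta}(H,L)$ unimodular} and \emph{$\mathfrak{A}_H\cdot\beta$ is an $\mathcal{O}_K$-order}, since strictly speaking one first needs $\mathfrak{A}_H\cdot\beta$ to be a ring; but it is automatically so as the image of the ring $\mathfrak{A}_H$ under the action on $\beta$, viewed inside $\mathcal{O}_L$ it is an $\mathcal{O}_K$-submodule closed under multiplication of $\mathcal{O}_L$ through the identification $\mathfrak{A}_H\hookrightarrow\End_{\mathcal{O}_K}(\mathcal{O}_L)$, so this causes no difficulty and the argument goes through cleanly.
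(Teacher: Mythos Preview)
Your argument follows the paper's own reasoning almost line by line: the forward direction via change of basis, and the converse via the discriminant relation $\mathrm{disc}_{\mathcal{O}_K}(\mathfrak{A}_H\cdot\beta)=[\mathcal{O}_L:\mathfrak{A}_H\cdot\beta]^{2}\,\mathrm{disc}_{\mathcal{O}_K}(\mathcal{O}_L)$ together with the cited equality-of-orders criterion.

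One caveat concerns your final paragraph. The justification you give for $\mathfrak{A}_H\cdot\beta$ being closed under the multiplication of $\mathcal{O}_L$ is not correct: the evaluation map $\mathfrak{A}_H\to\mathcal{O}_L$, $h\mapsto h\cdot\beta$, is \emph{not} a ring homomorphism, since for a Hopf action $(h_1h_2)\cdot\beta=h_1\cdot(h_2\cdot\beta)$ rather than $(h_1\cdot\beta)(h_2\cdot\beta)$. The paper glosses over this same point. In practice there is no obstacle, because the discriminant--index relation and the implication ``index a unit $\Rightarrow$ equality'' already hold at the level of full $\mathcal{O}_K$-lattices in $L$, so no ring structure on $\mathfrak{A}_H\cdot\beta$ is needed; alternatively, once $\mathcal{D}_{\beta}(H,L)\in\GL_n(\mathcal{O}_K)$ you can conclude directly that the columns span $\mathcal{O}_K^{\,n}$, hence $\{v_i\cdot\beta\}$ is an $\mathcal{O}_K$-basis of $\mathcal{O}_L$, bypassing discriminants entirely. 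Either fix is immediate, so the overall argument stands.
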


\begin{example}\label{quadraticfreeness} Let $L/K$ be a quadratic extension of $p$-adic fields, which is known to have the classical Galois structure $K[G]$ as its unique Hopf Galois structure. When $p\geq 3$, $L/K$ is tamely ramified and then $\mathcal{O}_L$ is $\mathfrak{A}_{K[G]}$-free. Let $z\in L$ such that $z\notin K$, $z^2\in K$ and $\mathcal{O}_L=\mathcal{O}_K[z]$. %Let us see that $\mathcal{O}_L$ has free generator $1+z$.

By Example \ref{quadratic}, a basis of the associated order is $\eta_1=1$, $\eta_2'=\dfrac{-1+\sigma}{2}$. The action of this basis over $L$ is 
\begin{center}
\begin{tabular}{c|cc}
$\mathfrak{A}_{K[G]}$ &$1$&$z$\\ \hline
$\eta_1$ & $1$ & $z$ \\
$\eta_2'$ & $0$ & $-z$ \\
\end{tabular}
\end{center}
Thus, $$\mathcal{D}_{1+z}(K[G],L)=\begin{pmatrix}
1 & 0 \\
1 & -1 
\end{pmatrix}$$ which is in $\GL_n(\mathcal{O}_K)$. Therefore $1+z$ generates $\mathcal{O}_L$ as $\mathfrak{A}_{K[G]}$-module.
\end{example}

\section{Induced Hopf Galois structures}\label{sectioninduced}

Let $L/K$ be a finite Galois extension of fields with Galois group $G$. The Greither-Pareigis theorem applied to this situation gives that Hopf Galois structures of $L/K$ have Hopf algebras of the form $H=L[N]^G,$ where $N$ runs through the regular subgroups of $\mathrm{Perm}(G)$ normalized by $\lambda(G)$. Here,
$\lambda\colon G\to \Perm(G)$ is the left regular representation of $G$.

Assume that $G$ decomposes as a semidirect product $G=J\rtimes G',$ where $J$ is a normal subgroup of $G$, normal complement for the subgroup $G'$. Let us denote $E=L^{G'}$, $F=L^J$, $r=[E:K]$, $u=[F:K]$. We have a lattice of intermediate fields \[\xymatrix{
& \ar@{-}[dl]^u_{G'} L \ar@{-}[dr]^J_r & \\
E \ar@{-}[dr]_r & & \ar@{-}[dl]^u F \\
& K &}
\]
with $E\cap F=K$ and $L=EF$. By \cite[Chapter 5, Theorem 5.5]{cohn}, $E/K$ and $F/K$ are $K$-linearly disjoint. Moreover, since $L/K$ is Galois, $L/E$ is Galois with Galois group $G'$. By the Greither-Pareigis theorem, Hopf Galois structures of $L/E$ are in one-to-one correspondence with regular subgroups of $\mathrm{Perm}(G')$ normalized by $\lambda'(G')$. Here, 
$\lambda':G'\to \mathrm{Perm}(G')$ is the
left regular representation of $G'$. 

On the other hand, if $\widetilde{E}/K$ is the normal closure of $E/K$ in $L/K$, then Hopf Galois structures of $E/K$ are in one-to-one correspondence with regular subgroups of $\mathrm{Perm}(\widetilde{X})$ normalized by $\widetilde{\lambda}(G)$, where $\widetilde{X}=\mathrm{Gal}(\widetilde{E}/K)/\mathrm{Gal}(\widetilde{E}/E)$ and $\tilde{\lambda}:\mathrm{Gal}(\widetilde{E}/K)\to\mathrm{Perm}(\widetilde{X})$ is given by the action of $\mathrm{Gal}(\widetilde{E}/K)$ by left translation on left cosets. 
But the Fundamental Theorem of Galois Theory gives us the isomorphisms 
$$
\mathrm{Gal}(\widetilde{E}/K)\cong\mathrm{Gal}(L/K)/\mathrm{Gal}(L/\widetilde{E}),$$ $$\mathrm{Gal}(\widetilde{E}/E)\cong\mathrm{Gal}(L/E)/\mathrm{Gal}(L/\widetilde{E}).
$$ 
Hence, there is a bijection $\widetilde{X}\cong\mathrm{Gal}(L/K)/\mathrm{Gal}(L/E)=G/G'=X\cong J$.
Taking the actions into account, the action of $\mathrm{Gal}(\widetilde{E}/K)$ by left translation on left cosets corresponds to the action of $G$ on left cosets. As for the second isomorphism, if we let $\sigma_1,\dots,\sigma_r$ of $J$ be a left transversal of $G'$, then the action of an element $g=\sigma\tau\in G$, with $\sigma\in J$ and $\tau\in G'$, is the following: 
$$\sigma\tau\cdot \sigma_i G'=\sigma\tau \sigma_i\tau^{-1}\tau G'=\sigma(\tau\sigma_i\tau^{-1}) G'$$
This gives an action $\lambda_c:G\to \mathrm{Perm}(J)$.

We build a Hopf Galois structure of $L/K$ from Hopf Galois structures of $L/E$ and $E/K$ as follows:

\begin{teo}[Induction]\label{teoinduced}
Assume that $N_1\subseteq\mathrm{Perm}(X)=\mathrm{Perm}(J)$ is regular and normalized by $\lambda_c(G)$, namely it gives $E/K$ a Hopf Galois structure, and $N_2\subseteq\mathrm{Perm}(G')$ is regular and normalized by $\lambda'(G')$, namely it gives $L/E$ a Hopf Galois structure. Then, $L/K$ has a Hopf Galois structure of type $N_1\times N_2$.
\end{teo}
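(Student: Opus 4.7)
The plan is to apply the Greither--Pareigis theorem to $L/K$: since $L/K$ is Galois with group $G$, I need to exhibit a subgroup $N\subseteq\mathrm{Perm}(G)$ that is regular, is normalized by $\lambda(G)$, and is abstractly isomorphic to $N_1\times N_2$. Using the semidirect decomposition $G=J\rtimes G'$, I would identify $G$ with $J\times G'$ as a set via the bijection $(\sigma,\tau)\mapsto \sigma\tau$ and embed
$$
N_1\times N_2\hookrightarrow \mathrm{Perm}(J)\times \mathrm{Perm}(G')\hookrightarrow \mathrm{Perm}(J\times G')\cong \mathrm{Perm}(G),
$$
where the second inclusion sends a pair $(\eta_1,\eta_2)$ to the componentwise permutation $(\sigma,\tau)\mapsto(\eta_1(\sigma),\eta_2(\tau))$. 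Regularity of this embedded subgroup is immediate, since a direct product of regular actions on the factors is regular on the product.

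Next I would compute how $\lambda(g)$ acts in the coordinates $J\times G'$. For $g=\sigma_0\tau_0$ with $\sigma_0\in J$, $\tau_0\in G'$, and any $(\sigma,\tau)\in J\times G'$, the normality of $J$ in $G$ gives
$$
\sigma_0\tau_0\cdot\sigma\tau=\bigl(\sigma_0\,\tau_0\sigma\tau_0^{-1}\bigr)\,\tau_0\tau,
$$
so under the chosen identification $\lambda(g)$ sends $(\sigma,\tau)$ to $(\sigma_0\tau_0\sigma\tau_0^{-1},\,\tau_0\tau)$. The first coordinate is precisely $\lambda_c(g)(\sigma)$ by the formula for $\lambda_c$ given just before the statement of the theorem, and the second is $\lambda'(\tau_0)(\tau)$. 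Hence $\lambda(g)$ is the componentwise permutation associated to the pair $(\lambda_c(g),\lambda'(\tau_0))\in\mathrm{Perm}(J)\times\mathrm{Perm}(G')$.

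Conjugation inside a componentwise subgroup acts componentwise, so for $\eta_1\in N_1$ and $\eta_2\in N_2$,
$$
\lambda(g)(\eta_1,\eta_2)\lambda(g)^{-1}=\bigl(\lambda_c(g)\,\eta_1\,\lambda_c(g)^{-1},\ \lambda'(\tau_0)\,\eta_2\,\lambda'(\tau_0)^{-1}\bigr).
$$
The first entry lies in $N_1$ because $\lambda_c(G)$ normalizes $N_1$ by hypothesis, and the second lies in $N_2$ because $\lambda'(G')$ normalizes $N_2$ and $\tau_0\in G'$. Therefore $\lambda(G)$ normalizes the image of $N_1\times N_2$ in $\mathrm{Perm}(G)$, and Greither--Pareigis supplies the desired Hopf Galois structure on $L/K$ of type $N_1\times N_2$.

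The step I expect to require the most care is the passage from the abstract product $N_1\times N_2$ to an actual subgroup of $\mathrm{Perm}(G)$: one has to fix the set-theoretic identification $G\cong J\times G'$ so that $\lambda(g)$ decomposes into a pair of permutations of the two factors, and the apparent twist $\tau_0\sigma\tau_0^{-1}$ appearing in the $J$-coordinate is exactly what is absorbed by the nonstandard action $\lambda_c$ introduced just before the theorem. This is the conceptual reason the hypotheses on $N_1$ are formulated in terms of $\lambda_c(G)$ rather than $\lambda(J)$, and once this identification is in place regularity and the normalization check reduce to coordinate-wise verifications.
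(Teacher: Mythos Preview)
Your proposal is correct and follows precisely the approach the paper itself sketches around the theorem: the paper's proof is simply a citation to \cite[Theorem 3]{cresporiovela}, but the surrounding text records the factorization $\lambda(\sigma\tau)=\iota\bigl(\lambda_c(\sigma\tau),\lambda'(\tau)\bigr)$ and sets $N=\iota(N_1\times N_2)$, which is exactly your construction. Your computation of $\lambda(g)$ in the $J\times G'$ coordinates and the componentwise conjugation check are the details behind that sketch, so nothing further is needed.
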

\begin{proof}
See \cite[Theorem 3]{cresporiovela}.
\end{proof}

Hopf Galois structures of $L/K$ are given by the regular subgroups of $\mathrm{Perm}(G)$ normalized by $\lambda(G)$. The definition of such a group relies on the following factorization of $\lambda$ 
$$
\begin{array}{ccccc}
    G &\to &\mathrm{Perm}(J)\times \mathrm{Perm}(G')&\hookrightarrow& \mathrm{Perm}(G)  \\
 \sigma\tau&\mapsto& \big(\lambda_c(\sigma\tau),\lambda'(\tau)\big)&&\\
&& (\varphi,\psi) & \mapsto & \iota(\varphi,\psi)
\end{array}
$$
where $\iota(\varphi,\psi)(\sigma\tau)=
\varphi(\sigma)\psi(\tau).$ The group $N=\iota(N_1\times N_2)$ provides a Hopf Galois structure for $L/K$.

\begin{rmk}
Let us remark that this notion of induction includes the case when $G$ is a direct product. By iteration we could deal for example with families like nilpotent groups, which are direct product of their Sylow subgroups. In this sense, Theorem 5 of \cite{byottNil} would say that for nilpotent Galois extensions all Hopf Galois structures of nilpotent type are induced. 
\end{rmk}

Although a regular subgroup of $\mathrm{Perm}(G)$ normalized by $\lambda(G)$ is enough to completely determine a Hopf Galois structure of $L/K$, our goal is to give a more precise description of the Hopf algebra and the Hopf action of an induced Hopf Galois structure for a better understanding of how induction works and how we can apply the methods of previous section to study associated orders and freeness of the ring of integers. 

\subsection{Induced Hopf algebras}\label{secinducedhopfalg}

Our first aim is to describe the Hopf algebra of an induced Hopf Galois structure of $L/K$ in terms of the Hopf algebras of the Hopf Galois structures from which it is built. To this end, first of all we prove that 
in fact induction can also be defined in terms of Hopf Galois structures of the subextensions with base field $K$.

\begin{pro}\label{proptensorcorresp} Let $L/K$ be a Galois extension with Galois group $G=J\rtimes G'$ and let $E=L^{G'}$, $F=L^J$. Then, there is an one-to-one correspondence between 
Hopf Galois structures of $F/K$ and 
Hopf Galois structures of $L/E$.
\end{pro}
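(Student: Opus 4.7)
The plan is to invoke the Greither--Pareigis classification for each of the Galois extensions $F/K$ and $L/E$ and observe that, after a canonical identification of groups, both are parameterised by the same combinatorial data. First, I would note that the semidirect decomposition $G = J \rtimes G'$ produces a natural isomorphism $\pi\colon G' \xrightarrow{\sim} G/J$, namely the composition $G' \hookrightarrow G \twoheadrightarrow G/J$; its inverse sends $gJ$ to the unique representative of $gJ$ lying in $G'$. Under this isomorphism $\mathrm{Gal}(F/K) = G/J$ becomes identified with $\mathrm{Gal}(L/E) = G'$.

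Since $F/K$ and $L/E$ are Galois (hence equal to their own normal closures), Greither--Pareigis says that their Hopf Galois structures are in bijection with the regular subgroups of $\Perm(G/J)$, respectively $\Perm(G')$, normalised by the left regular representation of the corresponding Galois group. The isomorphism $\pi$ induces an isomorphism $\Perm(G') \cong \Perm(G/J)$ that intertwines the two left regular representations, so the two sets of regular subgroups are literally identified. This identification furnishes the bijection $H_2 \leftrightarrow \overline{H}$.

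To match the notation of Theorem~\ref{firstmaintheorem}, I would then verify that the bijection is realised as $\overline{H} \mapsto \overline{H} \otimes_K E$. If $N$ is the common regular subgroup, then $\overline{H} = F[N]^{G/J}$ and $H_2 = L[N]^{G'}$. Using the linear disjointness $L \cong E \otimes_K F$ from Theorem~\ref{firstmaintheorem} and the fact that $G/J$ acts trivially on the $E$-factor of $F[N] \otimes_K E$, one obtains
\[
\overline{H} \otimes_K E \;=\; F[N]^{G/J} \otimes_K E \;\cong\; (F[N] \otimes_K E)^{G/J} \;\cong\; L[N]^{G'} \;=\; H_2,
\]
the middle step being flat base change of $G/J$-invariants along a $G/J$-trivial module (valid because $E$ is free as a $K$-module), and the last step coming from the $G'$-equivariant identification $F[N] \otimes_K E \cong L[N]$.

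The substantive work is bookkeeping: keeping aligned the three groups $G$, $G'$, $G/J$ and their actions on $F$, $L$, and on $N$ by conjugation, and checking that $\pi$ identifies the two left regular representations. Once this is done, the matching of regular subgroups is tautological. I expect the only delicate point to be the flat-base-change step in the tensor-product description; one can justify it either directly from the triviality of the $G/J$-action on $E$, or by passing through the $L$-form identity $L \otimes_K \overline{H} = L[N]$ and then taking $G'$-fixed points.
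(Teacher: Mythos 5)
Your proposal is correct and follows essentially the same route as the paper: both use the isomorphism $G'\cong G/J$ coming from the semidirect decomposition to identify $\mathrm{Perm}(G')$ with $\mathrm{Perm}(G/J)$ in a way that intertwines the two left regular representations, and then invoke Greither--Pareigis on each side. Your additional verification that the bijection is realised by $\overline{H}\mapsto\overline{H}\otimes_K E$ is a correct elaboration of a claim the paper states immediately after the proof without detailed justification.
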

\begin{proof}
Since $J$ is a normal subgroup of $G$, the extension $F/K$ is Galois with Galois group $G/J$. Its Hopf Galois structures are in one to one correspondence  with regular subgroups of $\Perm(G/J)$ normalized by 
$\lambda^{G/J}(G/J)$, where $\lambda^{G/J}:G/J\to \Perm(G/J)$ is the left regular representation of this quotient group.
Via the isomorphism
$$
\begin{array}{c}
G/J \longleftrightarrow G'\\
\overline \tau\longleftrightarrow \tau
\end{array}
$$ 
we can identify $\Perm(G/J)$ with $\Perm(G')$ and under this identification  $\lambda^{G/J}(G/J)$ identifies with $\lambda'(G')$.
Therefore regular subgroups of $\Perm(G')$ normalized by $\lambda'(G')$ are in one-to-one correspondence with regular subgroups of 
$\Perm(G/J)$ normalized by $\lambda^{G/J}(G/J)$.
\end{proof}

If we use the same name $N_2$ for such a regular subgroup the corresponding
Hopf algebras are
$\overline{H}=F[N_2]^{G/J}$ and $H_2=L[N_2]^{G'}$, 
so that $H_2=E\otimes_K \overline H$ and $\overline H=H_2^J$ (see 
\cite[Section 3]{KKTU} for a more general framework).

\begin{rmk} The actions of the Hopf algebras involved work as follows:
\begin{itemize}
    \item $E\otimes_K\overline{H}$ acts on $L=E\otimes_K F$  through the product on $E$ in the first term and the Hopf  action in the second one.
    \item The action of the Hopf Galois structure $H_2=L[N_2]^{G'}$ of $L/E$ on $L$ is $J$-equivariant, namely $\sigma(h\cdot x)=\sigma(h)\cdot\sigma(x)$ 
    for $\sigma\in J$, $h\in H_2$ and $x\in L$. Indeed, $J$ acts on $L$ by the classical Galois action and on $N_2$ by conjugation, but this last action turns out to be trivial (see the proof of \cite[Theorem 3.1]{KKTU}). Consequently, the restricted action of $H_2^J$ on $F=L^J$ makes sense, and this is the Hopf Galois action of $\overline{H}$ on $F$.
\end{itemize}
\end{rmk}
\[\xymatrix{
& \ar@{-}[dl]_{H_2} L \ar@{-}[dr] \ar@{-}[dd]^H & \\
E \ar@{-}[dr]_{H_1} & & \ar@{-}[dl]^{\overline{H}} F \\
& K &}
\] 
According to \cite[Theorem 3.1]{KKTU}, the Hopf algebra of an induced Hopf Galois structure fits in a short exact sequence with the Hopf Galois structures $H_1$ and $\overline{H}$ from which it is induced. But in our case there is a deeper relation: $H$ is actually the tensor product $H_1\otimes_K\overline{H}$.

\begin{pro}\label{propinduced} 
Let $N_1\subseteq\mathrm{Perm}(J)$ be regular and normalized by $\lambda_c(G)$  and let $N_2\subseteq\mathrm{Perm}(G/J)$ be regular and normalized by $\lambda^{G/J}(G/J)$. Then 
$N=\iota(N_1\times N_2)\subseteq\mathrm{Perm}(G)$ gives the induced Hopf Galois structure of $L/K$. Therefore, the corresponding Hopf algebras are $H=L[N]^G$,  
 $H_1=L[N_1]^G$ and $\overline{H}=F[N_2]^{G/J}$. Then, $$H= H_1\otimes_K\overline{H}.$$
\end{pro}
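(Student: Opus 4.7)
My plan is to prove the equality $H = H_1 \otimes_K \overline{H}$ by exhibiting $H_1$ and $\overline{H}$ as commuting $K$-subalgebras of $H = L[N]^G$, constructing the multiplication map $H_1 \otimes_K \overline{H} \to H$, and then verifying it is an isomorphism by descent and a dimension count.

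First I would exploit the direct-product structure behind $\iota$. Since $N_1 \times N_2 \xrightarrow{\iota} N$ is a group isomorphism, we have an $L$-algebra identification $L[N] \cong L[N_1] \otimes_L L[N_2]$, in which the subalgebras $L[\iota(N_1, 1)]$ and $L[\iota(1, N_2)]$ commute. The first one I will simply call $L[N_1]$ and the second $L[N_2]$. By definition $H_1 = L[N_1]^G$, so $H_1$ embeds in $H$ via the composite $H_1 \hookrightarrow L[N_1] \hookrightarrow L[N]$.

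Next I would verify that $\overline{H}$ likewise sits in $H$, by identifying $\overline{H}$ with $L[N_2]^G$. The crucial point is that $J$ acts trivially on $N_2$ under the conjugation action coming from $\iota$. Concretely, for $j \in J$ and $n_2 \in N_2$, using $j^{-1}\sigma\tau = (j^{-1}\sigma)\tau$ (since $J \trianglelefteq G$) together with the formula $\iota(\varphi,\psi)(\sigma\tau) = \varphi(\sigma)\psi(\tau)$, a short computation gives
\[
\lambda(j)\,\iota(1,n_2)\,\lambda(j)^{-1}(\sigma\tau) = j\cdot(j^{-1}\sigma)\,n_2(\tau) = \sigma\, n_2(\tau) = \iota(1,n_2)(\sigma\tau).
\]
Hence $J$ acts on $L[N_2]$ only through its Galois action on $L$, so $L[N_2]^J = F[N_2]$, and then
\[
L[N_2]^G = (L[N_2]^J)^{G/J} = F[N_2]^{G/J} = \overline{H}.
\]
In particular $\overline{H} \subset H$ as a $K$-subalgebra.

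Because $L[N_1]$ and $L[N_2]$ commute inside $L[N]$, multiplication defines a $K$-algebra homomorphism $\mu\colon H_1 \otimes_K \overline{H} \to L[N]$ whose image consists of $G$-invariant elements, i.e.\ lies in $H$. To conclude I would check that $\mu$ is bijective onto $H$. For injectivity, extend scalars along the faithfully flat map $K \to L$: the base-changed map is $L[N_1] \otimes_L L[N_2] \to L[N]$, which is the isomorphism from the first step. For surjectivity, compare $K$-dimensions: by Greither--Pareigis the Hopf algebra of a Hopf Galois structure on a degree-$n$ extension has $K$-dimension $n$, so $\dim_K H = [L:K] = ru$, while $\dim_K(H_1 \otimes_K \overline{H}) = [E:K]\cdot[F:K] = ru$. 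Hence $\mu$ is an isomorphism.

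The one step I expect to require the most care is the verification that $J$ acts trivially on $\iota(1,N_2)$ (equivalently, the identification $\overline{H} = L[N_2]^G$), because it is what translates the asymmetric construction of the induced structure on the $\mathrm{Perm}(G)$ side into the clean tensor decomposition on the Hopf algebra side. Everything else is formal once that identification is in place.
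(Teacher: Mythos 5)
Your proof is correct and follows essentially the same route as the paper: both rest on the decomposition $L[N]\cong L[N_1]\otimes_L L[N_2]$ induced by $\iota$, the componentwise nature of the conjugation action of $\lambda(G)$, and the identification $L[N_2]^G=F[N_2]^{G/J}=\overline{H}$ coming from the triviality of the $J$-action on $N_2$. The only difference is cosmetic: where the paper simply asserts that taking $G$-invariants distributes over the tensor factors, you close the argument with a multiplication map checked to be an isomorphism by faithfully flat base change plus a dimension count, which makes explicit a descent step the paper leaves implicit.
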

\begin{proof}

$$\begin{array}{rcl}
H&=&L[N]^G=L[\iota(N_1\times N_2)]^G=
\bigl(L[\iota(N_1\times1)\times \iota(1\times N_2)]\bigr)^{G}\\[2ex]
&=&\bigl(L[\iota(N_1\times 1)]\otimes_K L[\iota(1\times N_2)]\bigr)^{G}
=\bigl(L[N_1]\otimes_K L[ N_2]\bigr)^{G}\end{array}
$$
Since the action of conjugation by $\lambda(G)$ on $\Perm(G)$ factors through $\Perm(J)\times \Perm(G/J)$ as conjugation by $\lambda_c(G)$ on 
the first component and conjugation by $\lambda^{G/J}(G/J)$ on the 
second one, we have
$$\begin{array}{rcl}
H&=&L[N_1]^G\otimes_K F[ N_2]^{G/J}=H_1\otimes_K \overline H\end{array}
$$
\end{proof}

\begin{example}\label{exampledihedral} Let us assume that $G\cong D_{2p}$, the dihedral group of order $2p$ with $p$ an odd prime, and let us fix a presentation $G=\langle r,s\mid r^p=s^2=1,\,sr=r^{-1}s\rangle.$ Then $G=J\rtimes G'$ with $J=\langle r\rangle$ the unique order $p$ subgroup of $G$ and $G'$ any of the $p$ different order $2$ subgroups $G'_d=\langle r^ds\rangle$ with $0\leq d\leq p-1$. Therefore, $F=L^J/K$ is the unique degree $2$ subextension of $L/K$, while there are $p$ possible degree $p$ subextensions $E_d/K$ of $L/K$, $E_d=L^{\langle r^ds\rangle}$ (see \cite[Section 4]{cresporiovela2} for further details). Hence, $L/K$ has $p$ induced Hopf Galois structures, which are of cyclic type $C_{2p}$ and have respective Hopf algebras
$$H_d=L[\langle \lambda(r)\rangle]^G\otimes_K F[\langle \lambda'(r^ds)\rangle]^{G/J},\ 0\leq d\leq p-1.$$
The action on the second factor is obtained taking the elements of $G'_d$ as a system of representatives for $G/J$ and make them act by conjugation. Therefore, the action is trivial and we have the group algebra $K[G/J]$. In fact, the classical Galois structure is the unique Hopf Galois structure of a quadratic extension.
\end{example}

\begin{example}
Let $F$ be an imaginary quadratic field, and let $\mathcal{O}$ be the order of the conductor $f$ in $F$. Let $L$ be the ring class field associated to $\mathcal{O}$. Then $L/F$ is an abelian extension and $L/\mathbb{Q}$ is Galois. By \cite[Lemma 9.3]{cox}, $\mathrm{Gal}(L/\mathbb{Q})=\mathrm{Gal}(L/F)\rtimes G',$ where $G'$ is a group of order $2$. Let $E=L^{G'}$. Then, $J=\mathrm{Gal}(L/K)$ is a normal complement of $G'$, so the Hopf algebras of the induced Hopf Galois structures of $L/\mathbb{Q}$ are $H=H_1\otimes_K\overline{H},$ where $H_1$ gives the Hopf Galois structure of $E/\mathbb{Q}$ and $\overline{H}$ is the group algebra of $\mathrm{Gal}(F/\mathbb{Q})$, since the classical one is again the unique Hopf Galois structure of this extension.
\end{example}

\subsection{Induced Hopf actions and their representations}

In this section we study the Hopf action of an induced Hopf Galois structure $H=L[N]^G$ of $L/K$ in terms of the action of $H_1=L[N_1]^G$ on $E$ and the action of $\overline{H}=F[N_2]^{G/J}$ on $F$. 
We prove item $5$ of Theorem \ref{firstmaintheorem} in the following result. The terminology for representations follows \cite[section 3.2]{Serre} which covers the case of $G$ being a direct product and 
considering classical Galois actions.

\begin{pro}\label{actioninduced} 
With the previous notations for induced Hopf Galois structures, let 
$\rho_H:H\to \End_K(L)$, $\rho_{H_1}:H_1\to \End_K(E)$ and 
$\rho_{\overline H}:\overline H\to \End_K(F)$ be the representations
obtained from the respective Hopf actions. 
We have $L=E\otimes_K F$, $H=H_1\otimes \overline{H}$ and 
$$
\rho_H=\rho_{H_1}\otimes\rho_{\overline H}
$$
That is, for $w\in H_1$, $\eta\in\overline{H}$, 
$\alpha\in E$ and $z\in F$,
$(w\otimes\eta)\cdot(\alpha\otimes z)=(w\cdot\alpha)\otimes(\eta\cdot z).$\end{pro}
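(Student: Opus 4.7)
The plan is to reduce to the Greither-Pareigis formula on a spanning set. By Proposition~\ref{propinduced}, after base change to $L$ the identification $H = H_1 \otimes_K \overline{H}$ becomes the natural $L$-algebra isomorphism $L[N] = L[N_1] \otimes_L L[N_2]$, and both sides of the claimed equality extend uniquely to $L$-linear maps in the Hopf-algebra variable. It therefore suffices to verify the identity on a generator $n = \iota(n_1, n_2) \in N$ acting on a simple product $\alpha z$ with $\alpha \in E$ and $z \in F$, since such elements $K$-span $L = E \otimes_K F$.

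Fix such an $n$. Because $\iota$ is a group homomorphism (an immediate consequence of its definition and the unique decomposition $g = \sigma\tau$ with $\sigma \in J$, $\tau \in G'$), one computes
\[
n^{-1}(\overline{1}_G) = \iota(n_1^{-1}, n_2^{-1})(1_J \cdot 1_{G'}) = n_1^{-1}(1_J)\, n_2^{-1}(1_{G'}) = \sigma\tau,
\]
for $\sigma = n_1^{-1}(1_J) \in J$ and $\tau = n_2^{-1}(1_{G'}) \in G'$. The Greither-Pareigis formula then gives $n \cdot (\alpha z) = (\sigma\tau)(\alpha z) = (\sigma\tau)(\alpha)\,(\sigma\tau)(z)$. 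Since $\alpha \in L^{G'}$ and $\tau \in G'$, the first factor collapses to $\sigma(\alpha)$. For the second, normality of $J$ in $G$ gives $\tau^{-1}\sigma\tau \in J$, so writing $\sigma\tau = \tau(\tau^{-1}\sigma\tau)$ and using $z \in L^J$ yields $(\sigma\tau)(z) = \tau(z)$. Hence $n \cdot (\alpha z) = \sigma(\alpha)\,\tau(z)$.

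It remains to recognize this product as $(n_1 \cdot \alpha)(n_2 \cdot z)$. Under the bijection $\widetilde{X} \cong J$ established at the start of this section, the Greither-Pareigis action of $n_1$ on $E$ is $\alpha \mapsto n_1^{-1}(1_J)(\alpha) = \sigma(\alpha)$; under the isomorphism $G/J \cong G'$ of Proposition~\ref{proptensorcorresp}, the action of $n_2 \in N_2$ on $z \in F$ through $\overline{H} = F[N_2]^{G/J}$ is $\tau(z)$. Therefore $n \cdot (\alpha \otimes z) = (n_1 \cdot \alpha)(n_2 \cdot z) = (n_1 \cdot \alpha) \otimes (n_2 \cdot z)$, which is exactly the Kronecker-product formula on the chosen generators. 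Extending by the appropriate $L$- and $K$-linearities in the Hopf-algebra coefficients and in $\alpha$, $z$ completes the argument.

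The main obstacle is the bookkeeping with the three identifications $\widetilde{X} \cong J$, $G/J \cong G'$, and $N \cong \iota(N_1 \times N_2)$: one has to check that the distinguished coset $\overline{1}_G$ really corresponds to $1_J$ under the first, and that the Greither-Pareigis action of $n_2$ on $F$ through the quotient is indeed $\tau(z)$ for the same $\tau = n_2^{-1}(1_{G'})$. Once those correspondences are in place, the actual algebraic content is the one-line normality manipulation $\sigma\tau = \tau(\tau^{-1}\sigma\tau)$, which is the only ingredient specific to the semidirect (rather than direct) product case.
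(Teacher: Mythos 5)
Your proof is correct and follows essentially the same route as the paper's: expand in the group basis $\iota(N_1\times N_2)$, apply the Greither--Pareigis formula, and factor $\iota(n_1,n_2)^{-1}(\overline{1}_G)=\sigma\tau$ with $\sigma=n_1^{-1}(1_J)$, $\tau=n_2^{-1}(1_{G'})$, acting on $\alpha z$ as $\sigma(\alpha)\tau(z)$. You in fact make explicit the one step the paper's chain of equalities leaves implicit, namely that $(\sigma\tau)(\alpha)=\sigma(\alpha)$ because $\tau$ fixes $E=L^{G'}$ and $(\sigma\tau)(z)=\tau(z)$ via the normality manipulation $\sigma\tau=\tau(\tau^{-1}\sigma\tau)$.
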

\begin{proof}
As $w\in L[N_1]^G$ and $\eta\in F[N_2]^{G/J}$, let us write $$w=\sum_{i=1}^rc_in_i^{(1)},\,c_i\in L, \qquad \eta=\sum_{j=1}^ud_jn_j^{(2)},\,d_j\in F,$$ where $N_1=\{n_i^{(1)}\}_{i=1}^r$ and $N_2=\{n_j^{(2)}\}_{j=1}^u$. 
Recall that $\iota(n_i^{(1)},n_j^{(2)})(Id_G)=n_i^{(1)}(\mathrm{Id}_J)n_j^{(2)}(\mathrm{Id}_{G'})$. 
Then:

\begin{equation*}
    \begin{split}
         (w\otimes\eta)\cdot(\alpha\otimes  z)&=\left(\sum_{i=1}^r\sum_{j=1}^uc_id_j\iota\bigl((n_i^{(1)},n_j^{(2)})\bigr)\right)\cdot(\alpha\otimes z)\\&=\sum_{i=1}^r\sum_{j=1}^uc_id_j
         \iota\bigl((n_i^{(1)},n_j^{(2)})\bigr)^{-1}(\mathrm{Id_G})(\alpha z)\\&=\sum_{i=1}^r\sum_{j=1}^uc_id_j
         \iota\bigl(((n_i^{(1)})^{-1},(n_j^{(2)})^{-1})\bigr)(\mathrm{Id_G})(\alpha z)
         \\&=\sum_{i=1}^r\sum_{j=1}^uc_id_j(n_i^{(1)})^{-1}(\mathrm{Id_J})(\alpha)(n_j^{(2)})^{-1}(\mathrm{Id_{G'}})(z)\\&=\left(\sum_{i=1}^rc_i(n_i^{(1)})^{-1}(\mathrm{Id_J})(\alpha)\right)\left(\sum_{j=1}^ud_j(n_j^{(2)})^{-1}(\mathrm{Id_{G'}})(z)\right)\\&=\left(\sum_{i=1}^rc_in_i^{(1)}\right)\cdot\alpha\left(\sum_{j=1}^ud_jn_j^{(2)}\right)\cdot z=(w\cdot\alpha)\otimes(\eta\cdot z).
    \end{split}
\end{equation*}
\end{proof} 
Therefore, by suitable choices of basis, the matrices of the action of $H$ are Kronecker products of the matrices of the actions of $H_1$ and $\overline{H}$.

\begin{example}\label{degree6}
We consider again Example \ref{exeigQ3}. We have
\[\xymatrix{
& \ar@{-}[dl] L=\Q(\alpha,z) \ar@{-}[dr] \ar@{-}[dd]^H & \\
E=\Q(\alpha)\ar@{-}[dr]_{H_1} & & \ar@{-}[dl]^{\overline{H}} F=\Q(z) \\
& \Q &}
\] 
and the action of $H_1$ on $E$ and the action of $\overline{H}$ on $F$ are given by
%\begin{center}
\begin{equation}\label{table4}
\begin{tabular}{c|ccc} & \begin{tabular}[c]{@{}l@{}}$1$\end{tabular} & 
\begin{tabular}[c]{@{}l@{}}$\alpha$\end{tabular} &
\begin{tabular}[c]{@{}l@{}}$\alpha^2$\end{tabular}
\\ \hline
$w_1$ & $1$ & $\alpha$ & $\alpha^2$ \\
$w_2$ & $0$ & $3\alpha$ &$-3\alpha^2$ \\
$w_3$ & $2$ & $-\alpha$ & $-\alpha^2$ \\
\end{tabular}
\qquad\quad
\begin{tabular}{ c |c c  }
& 1 & $z$ \\
\hline 
$\eta_1$ & 1 & $z$  \\
$\eta_2$ & 1 & $-z$ \\
\end{tabular},
\end{equation}
respectively. The  Hopf action of the induced structure is immediately computed from these two actions. One can also check that the structure is given by $N=\langle g\rangle\subseteq\Perm(G)$, where $g=(1,rs,r^2,s,r,r^2s)$, and the Hopf action goes as follows:
\begin{center}
\begin{tabular}{ r |rrrrrr}
& $1$ &$z$& $\alpha$  & $\alpha z$ &$\alpha^2$ &$\alpha^2 z$ \\
\hline 
$id=w_1\otimes \eta_1$ & 
$1$&$z$&$\alpha$&$\alpha z$&$\alpha^2$&$\alpha^2 z$ \\
$g^3=w_1\otimes \eta_2$ 
& $1$ &$-z$& $\alpha$ & $-\alpha z$ &$\alpha^2$   &$-\alpha^2 z$\\
$z(g^{-2}-g^2)=w_2\otimes \eta_1$  
& $0$ & $0$& $3\alpha$  & $3\alpha z$ &$-3\alpha^2$      &$-3\alpha^2 z$ \\
$z(g-g^{-1})=w_2\otimes \eta_2$ 
&$0$& $0$ & $3\alpha$ & $-3\alpha z$ &$-3\alpha^2$      &$3\alpha^2 z$\\
$g^2+g^{-2}=w_3\otimes \eta_1$ 
& $2$& $2z$ & $-\alpha$    & $-\alpha z$&$-\alpha^2$      
 &$-\alpha^2 z$ \\
$g+g^{-1}=w_3\otimes \eta_2$ 
& $2$ & $-2z$ & $-\alpha$ & $\alpha z$   &$-\alpha^2$    &$\alpha^2 z$\\
\end{tabular}
\end{center}
By fixing the basis $\{1,z,\alpha,\alpha z,\alpha^2,\alpha^2z\}$ of $L$, we can compute the matrix representing the element $w_i\otimes\eta_j$ from the previous table, which coincides with the Kronecker product of the ones representing $w_i$ and $\eta_j$. For example, the matrix representing $w_2\otimes \eta_2$ is
$$
\left(\begin{array}{cccccc}
0 & 0 & 0 &0 & 0 & 0\\
0 & 0 & 0 & 0 & 0 & 0 \\
0 & 0 & 3 &0 & 0 & 0\\
0 & 0 & 0 &-3 & 0 & 0\\
0 & 0 & 0 &0 & -3 & 0 \\
0 & 0 & 0 & 0 & 0 & 3\\
\end{array}\right)= \begin{pmatrix}
0 & 0 & 0 \\
0 & 3 & 0 \\
0 & 0 & -3 \\
\end{pmatrix} \otimes\begin{pmatrix}
1 & 0 \\
0 & -1
\end{pmatrix}.
$$

\end{example}

\subsection{Associated order of an induced Hopf Galois action}

Let us assume that $L/K$ is any field extension with an induced 
Hopf Galois structure. 
We recall the setting of Section \ref{secinducedhopfalg}: $G=J\rtimes G'$
$E=L^{G'}$ and $F=L^J$, $E/K$ is $H_1$-Galois and $F/K$ is $\overline{H}$-Galois and we have  $H=H_1\otimes_K\overline{H}$.

We assume that the Hopf Galois structure is induced and has $H=H_1\otimes_K\overline{H}$. First of all we analyze the behaviour of the matrix of the action with respect to the tensor product. 
Let us fix a $K$-basis $\{w_i\}_{i=1}^{r}$ of $H_1$ and a $K$-basis $\{\eta_j\}_{j=1}^{u}$ of $\overline{H}$.
We also fix $K$-bases 
$\{\alpha_k\}_{k=1}^{r}$ of $E$ and  $\{z_l\}_{l=1}^{u}$  of $F$

\begin{teo}\label{actioninducedmatrices} 
We consider product bases in $H$ and $L$. Then, there exists a permutation matrix $P$ of size $n^2$ such that
 $$P M(H,L)=M(H_1,E)\otimes M(\overline{H},F).$$ 
\end{teo}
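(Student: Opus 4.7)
The plan is to verify the identity column by column and reduce it to a standard commutation identity between the two natural vectorizations of a Kronecker product of matrices. Fix lexicographic orders on the product bases $\{w_i\otimes\eta_j\}$ of $H=H_1\otimes_K\overline{H}$ and $\{\alpha_k\otimes z_l\}$ of $L=E\otimes_K F$, so that the column of $M(H,L)$ indexed by $w_i\otimes\eta_j$ occupies the same slot as the column of $M(H_1,E)\otimes M(\overline{H},F)$ produced by the Kronecker product construction. By Proposition \ref{actioninduced}, the matrix of $w_i\otimes\eta_j$ on $\{\alpha_k\otimes z_l\}$ is exactly the Kronecker product $A_i\otimes B_j$, where $A_i\in \mathrm{Mat}_r(K)$ represents $w_i$ on $E$ and $B_j\in\mathrm{Mat}_u(K)$ represents $\eta_j$ on $F$. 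Hence these two columns are, respectively, $\varphi(A_i\otimes B_j)\in K^{n^2}$ and $\varphi(A_i)\otimes\varphi(B_j)$ after the identification $K^{r^2}\otimes K^{u^2}\cong K^{n^2}$.

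It therefore suffices to exhibit one permutation matrix $P\in\mathrm{Mat}_{n^2}(K)$, depending only on $r$ and $u$, such that $P\,\varphi(A\otimes B)=\varphi(A)\otimes\varphi(B)$ for all $A\in\mathrm{Mat}_r(K)$ and $B\in\mathrm{Mat}_u(K)$. This is the classical ``perfect shuffle'' identity for vectorization by column-stacking, and I would prove it by a direct index computation. Labelling the rows and columns of $A\otimes B$ by pairs $(k,l)$ and $(k',l')$ in lex order, the entry $a_{k,k'}b_{l,l'}$ of $A\otimes B$ appears at position $u^2 r(k'-1)+ur(l'-1)+u(k-1)+(l-1)$ of $\varphi(A\otimes B)$ and at position $u^2 r(k'-1)+u^2(k-1)+u(l'-1)+(l-1)$ of $\varphi(A)\otimes\varphi(B)$. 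The bijection on $\{0,\dots,n^2-1\}$ induced by swapping the two middle indices $l'$ and $k$ of the 4-tuple $(k',l',k,l)$ thus defines the required permutation $P$.

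The only genuine difficulty is bookkeeping: keeping the lex orders on the product bases of $H$ and $L$ and on the Kronecker block structure mutually consistent. Once this is in place, the permutation $P$ is independent of $i$ and $j$, so applying it column-by-column yields the matrix equality $PM(H,L)=M(H_1,E)\otimes M(\overline{H},F)$, as required.
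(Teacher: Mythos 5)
Your proof is correct and follows essentially the same route as the paper: use Proposition \ref{actioninduced} to identify the matrix of $w_i\otimes\eta_j$ with the Kronecker product of the matrices of $w_i$ and $\eta_j$, then reduce the statement to the fixed permutation relating $\varphi(A\otimes B)$ to $\varphi(A)\otimes\varphi(B)$ via an explicit index computation. Your index bookkeeping matches the paper's (the permutation sending the mixed-radix tuple $(k',l',k,l)$ to $(k',k,l',l)$ is exactly the one the paper writes down), so nothing further is needed.
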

\begin{proof}
Since $H=H_1\otimes_K\overline{H}$, $\{w_i\eta_j\,|\,1\leq i\leq r,\,1\leq j\leq u\}$ is a $K$-basis of $H$. 
As we have seen in \ref{actioninduced} $\rho_H=\rho_{H_1}\otimes\rho_{\overline H}$. 
Considering the fixed bases of $E$ and $F$ we can identify $w_i$ and $\eta_j$ with matrices in $\mathrm{Mat}_r(K)$ and $\mathrm{Mat}_u(K)$ respectively. Then, fixing the $K$-basis $\{\alpha_k z_l\}$ of $L$, the matrix of $w_i\eta_j\in H$ is
$$
w_i\eta_j=w_i\otimes \eta_j
$$
For the block matrix of the action of $H$ we have
$$
\begin{pmatrix}
w_1\otimes \eta_1\\
\dots\\
w_1\otimes \eta_{u}\\
w_2\otimes \eta_1\\
\dots\\
w_{r}\otimes \eta_{u}
\end{pmatrix}=
\begin{pmatrix}
w_1\\
w_2\\
\dots\\
w_{r}
\end{pmatrix}
\circ
\begin{pmatrix}
\eta_1\\
\eta_2\\
\dots\\
\eta_{u}
\end{pmatrix},
$$
which is the Tracy–Singh product of partitioned matrices. To obtain the matrix of the action, we have to take
coordinates with respect to the canonical basis of the respective matrix spaces:
$M(H,L)$ has columns $\varphi(w_i\eta_j)=\varphi_n(w_i\otimes \eta_j)\in K^{n^2}$
while $M(H_1,E)$ has columns 
$\varphi_r(w_i)\in K^{r^2}$ and
$M(\overline H,F)$ has columns 
$\varphi_u(\eta_j)\in K^{u^2}$.

Working with canonical bases of matrices and vectors we find the permutation.
$$
\varphi(E^r_{ab}\otimes E^u_{cd})=\varphi(E^{n}_{u(a-1)+c, u(b-1)+d})=e_{nu(b-1)+n(d-1)+u(a-1)+c}\in K^{n^2}
$$
Here, we use $e_m$ for vectors of
the canonical basis of $K^{n^2}$ and $E^r_{a,b}$ denotes the $r\times r$ matrix having a $1$ in position $(a,b)$ and coefficients $0$ 
elsewhere. Notations for the other matrices is analogous.
On the other hand, if we indicate with superindexes vectors of the canonical basis of $K^{r^2}$ and 
$K^{u^2}$ respectively,
$$\varphi_r(E^r_{ab})\otimes \varphi_u(E^u_{cd})=
e^r_{r(b-1)+a}\otimes e^u_{u(d-1)+c}=
e_{nu(b-1)+u^2(a-1)+u(d-1)+c}
$$
Therefore,  
$P$ is the permutation matrix which places row 
$nu(b-1)+n(d-1)+u(a-1)+c$ in position 
$nu(b-1)+u^2(a-1)+u(d-1)+c$ for $1\le a,b\le r$ and  $1\le c,d\le u$.
\end{proof}

For example, if $r=u=2$, then the matrix $P$ has size $16$ and corresponds to the permutation $(3,5)(4,6)(11,13)(12,14)\in\mathfrak S_{16}$.  If $r=3, u=2$, then the matrix $P$ has size $36$ and corresponds to the
permutation
$(3\ 5\ 9\ 7)(4\ 6\ 10\ 8)(15\ 17\ 21\ 19)(16\ 18\ 22\ 20)(27\ 29\ 33\ 31)(28\ 30\ 34\ 32)\in\mathfrak S_{36}.$

Now we assume the hypothesis of section \ref{sectionintegral}
and we assume that we deal with and induced Hopf Galois structure with $H=H_1\otimes_K\overline{H}$. In this context we will think of $P$ as an unimodular matrix.

We consider the three Hopf actions involved and the corresponding associated orders $\mathfrak{A}_H$, $\mathfrak{A}_{H_1}$ and $\mathfrak{A}_{\overline{H}}$ of $\mathcal{O}_L$, $\mathcal{O}_E$ and $\mathcal{O}_F$, respectively. In what follows we study the relationship between these objects. The main goal is to prove the following:
\begin{teo}\label{teodescassocorder}  If $E/K$ and $F/K$ are arithmetically disjoint, then $$\mathfrak{A}_H=\mathfrak{A}_{H_1}\otimes_{\mathcal{O}_K}\mathfrak{A}_{\overline{H}}.$$
\end{teo}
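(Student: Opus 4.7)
The plan is to prove both inclusions, the first by direct action and the second by building an explicit basis of \(\mathfrak{A}_H\) via the matrix framework of Section \ref{sectionintegral}. The inclusion \(\mathfrak{A}_{H_1}\otimes_{\mathcal{O}_K}\mathfrak{A}_{\overline{H}}\subseteq\mathfrak{A}_H\) is immediate from Proposition \ref{actioninduced}: for \(\xi\in\mathfrak{A}_{H_1}\), \(\eta\in\mathfrak{A}_{\overline{H}}\), \(\alpha\in\mathcal{O}_E\) and \(z\in\mathcal{O}_F\), one has \((\xi\otimes\eta)\cdot(\alpha\otimes z)=(\xi\cdot\alpha)\otimes(\eta\cdot z)\in\mathcal{O}_E\otimes_{\mathcal{O}_K}\mathcal{O}_F=\mathcal{O}_L\), the last equality being arithmetic disjointness; by \(\mathcal{O}_K\)-linearity this extends to all of \(\mathcal{O}_L\), so the pure tensors (and hence the full tensor product of orders) lie in \(\mathfrak{A}_H\).

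For the reverse inclusion I would read off a basis of \(\mathfrak{A}_H\) from the tensor decomposition of the matrix of the action. Fix \(\mathcal{O}_K\)-bases \(\{\alpha_k\}_{k=1}^{r}\) of \(\mathcal{O}_E\) and \(\{z_l\}_{l=1}^{u}\) of \(\mathcal{O}_F\); by arithmetic disjointness \(\{\alpha_k z_l\}\) is an \(\mathcal{O}_K\)-basis of \(\mathcal{O}_L\). By Theorem \ref{actioninducedmatrices} there is a permutation matrix \(P\) with \(P\,M(H,L)=M(H_1,E)\otimes M(\overline{H},F)\). Writing \(M(H_1,E)=d_1M_1\) and \(M(\overline{H},F)=d_2M_2\) with \(M_i\) integral with coprime entries, one has \(M(H,L)=d_1d_2\,P^{-1}(M_1\otimes M_2)\); since \(\gcd_{i,j}(a_ib_j)=\gcd_i(a_i)\gcd_j(b_j)=1\), the common denominator of \(M(H,L)\) is \(d_M=d_1d_2\) and the integral part still has coprime coefficients.

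Now apply Theorem \ref{hermitenormalform} to each factor: choose unimodular \(U_i\) with \(U_iM_i=\left(\begin{smallmatrix}D_i\\ 0\end{smallmatrix}\right)\), where \(D_i\in M_{r_i}(\mathcal{O}_K)\cap GL_{r_i}(K)\). A direct index computation based on the vectorization convention \(\varphi(E_{ij})=e_{n(j-1)+i}\) of Definition \ref{defimatrixaction} shows that \((U_1\otimes U_2)(M_1\otimes M_2)\) has exactly \(n=ru\) non-zero rows and, after a row permutation \(Q\) that collects them at the top, becomes \(\left(\begin{smallmatrix}D_1\otimes D_2\\ 0\end{smallmatrix}\right)\). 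Composing \(P\), \(U_1\otimes U_2\) and \(Q\) yields a unimodular \(U\) with \(U\,M(H,L)=\left(\begin{smallmatrix}D_1\otimes D_2\\ 0\end{smallmatrix}\right)\), so by the remark after Theorem \ref{basisassocorder} the columns of
\[
\tfrac{1}{d_1d_2}(D_1\otimes D_2)^{-1}=\tfrac{1}{d_1}D_1^{-1}\otimes\tfrac{1}{d_2}D_2^{-1}
\]
form an \(\mathcal{O}_K\)-basis of \(\mathfrak{A}_H\). The columns on the right are precisely tensor products of columns of \(\tfrac{1}{d_i}D_i^{-1}\), and these in turn give \(\mathcal{O}_K\)-bases of \(\mathfrak{A}_{H_1}\) and \(\mathfrak{A}_{\overline{H}}\) by Theorem \ref{basisassocorder}. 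Re-interpreting columns as elements of \(H=H_1\otimes_K\overline{H}\), the basis just produced is \(\{v_a^{(1)}\otimes v_b^{(2)}\}\), i.e.\ exactly an \(\mathcal{O}_K\)-basis of \(\mathfrak{A}_{H_1}\otimes_{\mathcal{O}_K}\mathfrak{A}_{\overline{H}}\); the equality of orders follows.

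The main obstacle I anticipate is the combinatorial bookkeeping: verifying that the three layers of row operations compose to a genuinely unimodular matrix over \(\mathcal{O}_K\), and checking that under the lexicographic ordering of the product bases \(\{w_i\otimes\eta_j\}\) of \(H\) and \(\{\alpha_k z_l\}\) of \(\mathcal{O}_L\) the columns of \((D_1\otimes D_2)^{-1}\) recover the tensor products \(v_a^{(1)}\otimes v_b^{(2)}\) in the expected order rather than in some scrambled permutation. Once this indexing is pinned down, the identification of the basis with the tensor product of bases is automatic and the result drops out.
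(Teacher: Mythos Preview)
Your proposal is correct and follows essentially the same route as the paper: both arguments pass through the Kronecker decomposition of $M(H,L)$ from Theorem \ref{actioninducedmatrices}, establish $d_M=d_1\overline{d}$ and $D=D_1\otimes\overline{D}$ via unimodular row operations on the factors, and read off the basis of $\mathfrak{A}_H$ from $(D_1\otimes\overline{D})^{-1}=D_1^{-1}\otimes\overline{D}^{-1}$. Your explicit preliminary inclusion $\mathfrak{A}_{H_1}\otimes_{\mathcal{O}_K}\mathfrak{A}_{\overline{H}}\subseteq\mathfrak{A}_H$ and the row permutation $Q$ collecting the nonzero rows are minor clarifications that the paper leaves implicit, but the substance is the same.
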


The idea is to use the  previous method to compute bases of the associated orders involved and prove that the product of suitable bases of $\mathfrak{A}_{H_1}$ and $\mathfrak{A}_{\overline{H}}$ gives a basis for $\mathfrak{A}_H$. 

We fix an $\mathcal{O}_K$-basis $\{\alpha_k\}_{k=1}^{r}$ of $\mathcal{O}_E$ and an $\mathcal{O}_K$-basis $\{z_l\}_{l=1}^{u}$  of $\mathcal{O}_F$. Since we assume that $E/K$ and $F/K$ are arithmetically disjoint, we have  $\mathcal{O}_L=\mathcal{O}_E\otimes_{\mathcal{O}_K}\mathcal{O}_F$, so the products $\{\alpha_kz_l\}_{k,l}$ form an $\mathcal{O}_K$-basis of $\mathcal{O}_L$.
These integral bases are also $K$-bases for the respective fields and Theorem \ref{teomatrixassocorder} applies. 

\begin{pro}
Let us write $M(H,L)=d_MM$ with $d_M\in K$ and $M\in\mathcal{M}_n(\mathcal{O}_K)$ with coprime coefficients. The integral matrices  $\dfrac{1}{d_M}\,  M(H,L)$ and $\dfrac{1}{d_M}(M(H_1,E)\otimes M(\overline{H},F))$ have the same Hermite normal form. 
\end{pro}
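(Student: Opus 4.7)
The plan is to leverage Theorem \ref{actioninducedmatrices} together with the uniqueness property of the Hermite normal form. By that theorem there is a permutation matrix $P$ of size $n^2$ such that
$$P\, M(H,L) = M(H_1,E)\otimes M(\overline{H},F),$$
and every permutation matrix lies in $\GL_{n^2}(\mathcal{O}_K)$, so $P$ is unimodular.

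First I would check that the scalar $d_M$ works simultaneously as a common denominator for both matrices. Writing $M(H,L)=d_M M$ with $M\in\mathrm{Mat}_{n^2\times n}(\mathcal{O}_K)$ having coprime entries, the identity above yields
$$M(H_1,E)\otimes M(\overline{H},F) = P\, M(H,L) = d_M (PM),$$
and $PM$ is merely a row permutation of $M$, hence still integral with coprime entries. Consequently $\tfrac{1}{d_M}M(H,L)=M$ and $\tfrac{1}{d_M}(M(H_1,E)\otimes M(\overline{H},F))=PM$, so both matrices of the statement are integral, and the question of Hermite normal form is well posed.

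Next I would invoke the standard characterization: two integer matrices of the same size share the same Hermite normal form if and only if they differ by left multiplication by a unimodular matrix (this is an immediate consequence of the existence and uniqueness asserted in Theorem \ref{hermitenormalform}). Since $PM = P\cdot M$ with $P\in\GL_{n^2}(\mathcal{O}_K)$, the matrices $M$ and $PM$ produce the same Hermite normal form, which is exactly the claim.

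The main obstacle, if any, is the bookkeeping around $d_M$: one must observe that the permutation structure from Theorem \ref{actioninducedmatrices} guarantees that the entries of the Kronecker product are merely a reordering of those of $M(H,L)$, so the ``remove common denominator'' step produces the same $d_M$ on both sides. Once this is noted, everything reduces to the unimodular-invariance of Hermite normal form.
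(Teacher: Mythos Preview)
Your proposal is correct and follows essentially the same route as the paper: both arguments apply Theorem \ref{actioninducedmatrices} to obtain the permutation matrix $P$, note that a row permutation leaves the content $d_M$ unchanged, and then conclude by the invariance of the Hermite normal form under left multiplication by a unimodular matrix. The only cosmetic difference is that the paper writes out the unimodular factor explicitly as $UP^{-1}$, whereas you invoke the general characterization directly.
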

\begin{proof}
We apply Theorem \ref{actioninducedmatrices} and observe that since product by $P$ is just a permutation of the rows, the {\it content} $d_M$ does not change. 

We have an unimodular matrix $U\in GL_n(\mathcal{O}_K)$ such that
$$U\, M(H,L)={d_M}\begin{pmatrix}
D\\\hline O\\
\end{pmatrix}$$
where $D$ has coefficients in $\mathcal{O}_K$ and is in Hermite normal form.
Then,
$$UP^{-1}\, PM(H,L)={d_M}\begin{pmatrix}
D\\\hline O\\
\end{pmatrix}$$
Since $UP^{-1}$ is unimodular and the Hermite normal form is unique, we have obtained the Hermite normal form of $\dfrac1{d_M}(M(H_1,E)\otimes M(\overline{H},F))$.
\end{proof}

\begin{pro} If $\dfrac{1}{d_1}M(H_1,E)$ and 
$\dfrac{1}{\overline d}M(\overline H,F)$ have integral coprime coefficients, then $d_M=d_1\overline d$ up to multiplication by an unit and if 
$$
U_1\, M(H_1,E)=d_1\begin{pmatrix}
D_1\\\hline O\\
\end{pmatrix}\qquad \overline U\, M(\overline H,F)={\overline d}\begin{pmatrix}
\ \overline D\ \\ \hline O\\
\end{pmatrix}
$$
with $ U_1\in GL_{r^2}(\mathcal{O}_K)$, $\overline U\in GL_{u^2}(\mathcal{O}_K)$ and $D_1, \overline D$ in Hermite normal form, then
$$
D=D_1\otimes \overline D.
$$
\end{pro}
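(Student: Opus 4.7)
The plan is to apply Theorem \ref{actioninducedmatrices}, which gives $PM(H,L)=M(H_1,E)\otimes M(\overline H,F)$ for a permutation matrix $P$, and to pull back the Hermite normal form computation for $M(H,L)$ to those of $M(H_1,E)$ and $M(\overline H,F)$ by means of the mixed-product identity $(AC)\otimes (BD)=(A\otimes B)(C\otimes D)$.

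First I would establish $d_M=d_1\overline d$ up to a unit. Since $P$ is a permutation, the multiset of entries of $M(H,L)$ coincides with that of $M(H_1,E)\otimes M(\overline H,F)$, so it suffices to show that the content of a Kronecker product over the PID $\mathcal O_K$ is multiplicative. The inclusion $d_1\overline d\mid c(M(H_1,E)\otimes M(\overline H,F))$ is immediate from entrywise divisibility. Conversely, factoring $a_{ij}$ out of $\gcd_{k,l}(a_{ij}b_{kl})=a_{ij}\,c(B)$ and then taking the gcd over $(i,j)$ yields $c(A\otimes B)=c(A)c(B)$.

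Next, I would combine the two given Hermite decompositions via the mixed-product identity:
$$
(U_1\otimes\overline U)\,PM(H,L)=(U_1M(H_1,E))\otimes(\overline U M(\overline H,F))=d_1\overline d\left(\begin{pmatrix}D_1\\ \hline O\end{pmatrix}\otimes\begin{pmatrix}\overline D\\ \hline O\end{pmatrix}\right).
$$
The rows of the right-hand Kronecker product are naturally indexed by pairs $(i,k)$ with $1\le i\le r^2$, $1\le k\le u^2$, and are nonzero exactly when $i\le r$ and $k\le u$; in that case they form an $ru\times ru$ submatrix equal to $D_1\otimes\overline D$. Hence there is a permutation matrix $Q$ with
$$
Q\left(\begin{pmatrix}D_1\\ \hline O\end{pmatrix}\otimes\begin{pmatrix}\overline D\\ \hline O\end{pmatrix}\right)=\begin{pmatrix}D_1\otimes\overline D\\ \hline O\end{pmatrix}.
$$
The product $Q(U_1\otimes\overline U)P$ lies in $\GL_{n^2}(\mathcal{O}_K)$ and transforms $M(H,L)$ into a decomposition of the same shape as in Theorem \ref{hermitenormalform}. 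By uniqueness of the Hermite normal form we conclude $D=D_1\otimes\overline D$, provided $D_1\otimes\overline D$ is itself in Hermite normal form.

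The main obstacle is this last verification, which is the only nontrivial point beyond bookkeeping. Upper triangularity and the normalization of the diagonal entries (products of the diagonal entries of $D_1$ and $\overline D$) are inherited from the corresponding properties of $D_1$ and $\overline D$. For the entries above the diagonal, one distinguishes, for a pair of positions $(i,k)$ and $(j,l)$, the cases $i<j$ and $i=j,\,k<l$, and checks that reducedness modulo the pivots in $D_1$ and $\overline D$ forces reducedness modulo the product pivot in $D_1\otimes\overline D$. In the local setting, where $\mathcal{O}_K$ is a DVR, the pivots are powers of a uniformizer and this reduces to the elementary observation that a product of canonical residue representatives is of bounded size and again lies in the canonical system.
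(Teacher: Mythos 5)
Your argument is correct and takes essentially the same route as the paper: multiplicativity of the content under Kronecker products, the mixed-product property, and uniqueness of the Hermite normal form. You are in fact somewhat more careful than the paper at the two points it passes over silently, namely the extra row permutation $Q$ needed to gather the nonzero rows of the Kronecker product of the two block matrices into the block shape with $D_1\otimes\overline D$ on top, and the verification that $D_1\otimes\overline D$ is itself in Hermite normal form, which the paper asserts without comment and which does require the reducedness check you sketch (valid for the standard residue representatives over $\mathbb{Z}$ or a discrete valuation ring).
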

\begin{proof} The relation between $d_M$, $d_1$ and $\overline{d}$
comes from the fact that the Kronecker product of matrices
with coprime coefficients is also a matrix with coprime coefficients. Since $d_M$, $d_1$ and $\overline{d}$ are determined theirselves up to multiplication by an unit, the equality also holds up to multiplication by an unit.
We have 
$$(U_1 M(H_1,E))\otimes(\overline U M(\overline H,F))=
(U_1\otimes \overline U )\left( 
M(H_1,E)\otimes M(\overline H,F)\right)
$$
by the mixed product property of the Kronecker product.
Therefore,
$$
{d_1}\begin{pmatrix}
D_1\\\hline O\\
\end{pmatrix}\otimes {\overline d}\begin{pmatrix}
\ \overline D\ \\ \hline O\\
\end{pmatrix}
=(U_1\otimes \overline U )P M(H,L) 
$$
$$
{d_M}\begin{pmatrix}
D_1\otimes \overline D\\\hline O\\
\end{pmatrix}
=(U_1\otimes \overline U )P M(H,L) 
$$
Since $(U_1\otimes \overline U )P\in \GL_{n^2}(\mathcal{O}_K)$
and $D_1\otimes \overline D$ is in Hermite normal form, unicity gives $D=D_1\otimes \overline D.$
\end{proof}

\begin{coro} If for a matrix $U_1\in GL_{r^2}(\mathcal{O}_K)$ we have 
$$
U_1\, M(H_1,E)=\begin{pmatrix}
D_1\\\hline O\\
\end{pmatrix}
$$
with $D_1\in\GL_r(K)$, and for a matrix $\overline U\in GL_{u^2}(\mathcal{O}_K)$ we have
$$
\overline U\, M(\overline H,F)=\begin{pmatrix}
\ \overline D\ \\ \hline O\\
\end{pmatrix}
$$
with $\overline D\in\GL_u(K)$, then
$$(U_1\otimes \overline U )P M(H,L)= \begin{pmatrix}
D_1\otimes \overline D\\\hline O\\
\end{pmatrix}$$
with $(U_1\otimes \overline U )P\in \GL_{n^2}(\mathcal{O}_K)$ and 
$D=D_1\otimes \overline D\in \GL_n(K)$.
\end{coro}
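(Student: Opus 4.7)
The plan is to deduce the corollary directly from Theorem \ref{actioninducedmatrices} combined with the mixed-product property of the Kronecker product, essentially repeating the key computation at the heart of the preceding proposition but without invoking Hermite normal form or its uniqueness.

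The starting point is Theorem \ref{actioninducedmatrices}, which supplies the permutation $P$ satisfying $P\,M(H,L)=M(H_1,E)\otimes M(\overline H,F)$. I would multiply on the left by $U_1\otimes\overline U$ and use the mixed-product identity $(A\otimes B)(C\otimes D)=(AC)\otimes(BD)$ to obtain
$$
(U_1\otimes\overline U)\,P\,M(H,L)=\bigl(U_1\,M(H_1,E)\bigr)\otimes\bigl(\overline U\,M(\overline H,F)\bigr).
$$
Substituting the hypotheses, the right-hand side becomes $\bigl(\begin{smallmatrix}D_1\\O\end{smallmatrix}\bigr)\otimes\bigl(\begin{smallmatrix}\overline D\\O\end{smallmatrix}\bigr)$, which by the block-matrix identity $\bigl(\begin{smallmatrix}D_1\\O\end{smallmatrix}\bigr)\otimes\bigl(\begin{smallmatrix}\overline D\\O\end{smallmatrix}\bigr)=\bigl(\begin{smallmatrix}D_1\otimes\overline D\\O\end{smallmatrix}\bigr)$ used in the proof of the preceding proposition collapses to $\bigl(\begin{smallmatrix}D_1\otimes\overline D\\O\end{smallmatrix}\bigr)$.

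It then remains to verify the two integrality/invertibility claims. The matrix $P$ is a permutation matrix and hence unimodular over $\mathcal{O}_K$; the matrix $U_1\otimes\overline U$ has determinant $(\det U_1)^{u^2}(\det\overline U)^{r^2}\in\mathcal{O}_K^{\ast}$, since $U_1\in\GL_{r^2}(\mathcal{O}_K)$ and $\overline U\in\GL_{u^2}(\mathcal{O}_K)$, so it too is unimodular. Their product $(U_1\otimes\overline U)\,P$ therefore lies in $\GL_{n^2}(\mathcal{O}_K)$. By the same determinantal identity, $\det(D_1\otimes\overline D)=(\det D_1)^u(\det\overline D)^r$ is nonzero in $K$ because $D_1\in\GL_r(K)$ and $\overline D\in\GL_u(K)$, so $D_1\otimes\overline D\in\GL_n(K)$.

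There is no significant obstacle in this argument: all the heavy lifting has already been done by Theorem \ref{actioninducedmatrices}, which constructs the permutation $P$ linking $M(H,L)$ with the Kronecker product $M(H_1,E)\otimes M(\overline H,F)$. The only delicate step is the block-matrix identity for the Kronecker product of two matrices whose nonzero rows sit in a top block, and this is the same identity already exploited in the preceding proposition; the simplification relative to that proposition is precisely that we now drop the uniqueness argument based on Hermite normal form, so $D_1$ and $\overline D$ need not themselves be in Hermite normal form.
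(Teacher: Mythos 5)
Your argument is correct and is essentially the proof the paper intends: the corollary is just the computation of the preceding proposition (Theorem \ref{actioninducedmatrices} plus the mixed-product property of the Kronecker product) with the Hermite-normal-form uniqueness step stripped out. The one imprecision — shared with the paper — is that $\bigl(\begin{smallmatrix}D_1\\O\end{smallmatrix}\bigr)\otimes\bigl(\begin{smallmatrix}\overline D\\O\end{smallmatrix}\bigr)$ has its $ru$ nonzero rows interleaved (in positions $(i-1)u^2+k$) rather than stacked on top, so strictly one must absorb one further row permutation into the unimodular left factor, which affects nothing in the conclusion.
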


Either in Hermite normal form or in general form, since 
$$(D_1\otimes \overline D)^{-1}=D_1^{-1}\otimes  \overline D^{\,-1}
$$
and columns of the corresponding matrices provide bases for 
$\mathfrak{A}_H$, $\mathfrak{A}_{H_1}$ and $\mathfrak{A}_{\overline H}$ respectively,
this finishes the proof of Theorem \ref{teodescassocorder}.

\begin{example}\label{exeigQ3'matrix}
In example \ref{exeigQ3'}, the normal closure of $E$ is $L=\Q(\alpha,z)$, where $z=\sqrt{-1}$. In analogy with Example \ref{degree6}, if we call $F=\Q(z)$, then $L=EF$ and we can induce a Hopf Galois structure $H$ of $L/K$ from the Hopf Galois structure $H_1$ of $E/\Q$ and $\overline{H}$ of $F/\Q$.

Since $F/\Q$ is unramified, the extensions $E/\Q$ and $F/\Q$ are arithmetically disjoint and
we can obtain the associated order to the induced Hopf Galois structure of $L/\Q$ using the above method. We take
$$
D_1=\begin{pmatrix}
1&0&-1\\0&1&0\\0&0&3
\end{pmatrix},\qquad
\overline D=
\begin{pmatrix}
 1&-1\\ 0&2
 \end{pmatrix}.
$$
\begin{equation*}\label{table4bis}
\begin{tabular}{c|ccc} $\mathfrak{A}_{ H_1}$& \begin{tabular}[c]{@{}l@{}}$1$\end{tabular} & 
\begin{tabular}[c]{@{}l@{}}$\alpha$\end{tabular} &
\begin{tabular}[c]{@{}l@{}}$\alpha^2$\end{tabular}
\\ \hline
$w_1$ & $1$ & $\alpha$ & $\alpha^2$ \\[1ex]
$w_2$& $0$& $3+9\alpha+2\alpha^2$ & $-3-30\alpha-9\alpha^2$ \\[1ex]
$w_3'=\dfrac{w_1+w_3}{3}$ & $1$ & $-1$& $3$ \\
\end{tabular}
\qquad
\begin{tabular}{c|cc}
$\mathfrak{A}_{\overline H}$ &$1$&$z$\\ \hline
$\eta_1$ & $1$ & $z$ \\
$\eta_2'=\dfrac{\eta_1+\eta_2}{2}$ & $1$ & $0$ \\
\end{tabular}
\end{equation*}
Then,
$$
D=D_1\otimes \overline D=\begin {pmatrix} 
 1&-1&0&0&-1&1\\
 0&2&0&0&0&-2\\
 0&0&1&-1&0&0\\ 
 0&0&0&2&0&0\\ 
0&0&0&0&3&-3\\ 
0&0&0&0&0&6
\end{pmatrix}\quad 
D^{-1}=
\begin{pmatrix} 
 1&1/2&0&0&1/3&1/6\\ 
 0&1/2&0&0&0&1/6\\ 
0&0&1&1/2&0&0\\ 
0
&0&0&1/2&0&0\\ 0&0&0&0&1/3&1/6\\ 
0&0&0&0&0&1/6
\end{pmatrix} 
$$
gives
$$
\mathfrak{A}_H=\mathbb{Z}_3\left[\, 
w_1\eta_1,
\dfrac{w_1\eta_1+w_1\eta_2}2,
w_2\eta_1,
\dfrac{w_2\eta_1
+w_2\eta_2}{2},
\dfrac{w_1\eta_1+w_3\eta_1}3,
\dfrac{w_1\eta_1+w_1\eta_2+w_3\eta_1+w_3\eta_2}{6}\,\right]
$$
$$
\mathfrak{A}_H=\mathbb{Z}_3\left[\, w_1\eta_1,w_1\eta_2',
w_2\eta_1,
w_2\eta_2',
w_3'\eta_1,
w_3'\eta_2'
\,\right].
$$
Taking the basis $1,z,\alpha,\alpha z,\alpha_2,\alpha^2z$ for $\mathcal{O}_L$ we can compute the action of $\mathfrak{A}_H$ using the tables above.
\end{example}

\subsection{Induced Hopf Galois module structure}

%\subsubsection{Induced freeness}

 Our aim to relate the freeness of $\mathcal{O}_L$ as $\mathfrak{A}_H$-module with the freeness of $\mathcal{O}_E$ as $\mathfrak{A}_{H_1}$-module and the freeness of $\mathcal{O}_F$ as $\mathfrak{A}_{\overline{H}}$-module.

\begin{teo}\label{teoinducedfreeness} Let us assume that $E/K$ and $F/K$ are arithmetically disjoint. If $\mathcal{O}_E$ is $\mathfrak{A}_{H_1}$-free and $\mathcal{O}_F$ is $\mathfrak{A}_{\overline{H}}$-free, then $\mathcal{O}_L$ is $\mathfrak{A}_H$-free. 

Moreover, if $\gamma$ is a $\mathfrak{A}_{H_1}$-free generator of $\mathcal{O}_E$ and $\delta$ is a $\mathfrak{A}_{\overline{H}}$-free generator of $\mathcal{O}_F$, then $\gamma\delta$ is a $\mathfrak{A}_H$-free generator of $\mathcal{O}_L$.
\end{teo}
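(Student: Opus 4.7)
The plan is to combine Theorem \ref{teodescassocorder} with the multiplicative formula for the induced Hopf action from Proposition \ref{actioninduced}, using the arithmetic disjointness hypothesis to transfer an integral basis from $\mathcal{O}_E \otimes_{\mathcal{O}_K} \mathcal{O}_F$ to $\mathcal{O}_L$. I will first choose an $\mathcal{O}_K$-basis $\{v_i\}_{i=1}^{r}$ of $\mathfrak{A}_{H_1}$ and an $\mathcal{O}_K$-basis $\{\mu_j\}_{j=1}^{u}$ of $\mathfrak{A}_{\overline{H}}$. By Theorem \ref{teodescassocorder}, the family $\{v_i \otimes \mu_j\}_{i,j}$ is an $\mathcal{O}_K$-basis of $\mathfrak{A}_H = \mathfrak{A}_{H_1} \otimes_{\mathcal{O}_K} \mathfrak{A}_{\overline{H}}$.

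Next I would exploit the formula for the action of a pure tensor. By Proposition \ref{actioninduced}, for every $i$, $j$ we have
\[
(v_i \otimes \mu_j) \cdot (\gamma \otimes \delta) = (v_i \cdot \gamma) \otimes (\mu_j \cdot \delta),
\]
which in $L = EF$ reads $(v_i \otimes \mu_j) \cdot (\gamma\delta) = (v_i \cdot \gamma)(\mu_j \cdot \delta)$. Here the freeness hypotheses say that $\{v_i \cdot \gamma\}_{i=1}^{r}$ is an $\mathcal{O}_K$-basis of $\mathcal{O}_E$ and $\{\mu_j \cdot \delta\}_{j=1}^{u}$ is an $\mathcal{O}_K$-basis of $\mathcal{O}_F$.

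Now arithmetic disjointness enters: since $\mathcal{O}_L = \mathcal{O}_E \otimes_{\mathcal{O}_K} \mathcal{O}_F$, the $ru = n$ products $(v_i \cdot \gamma)(\mu_j \cdot \delta)$ form an $\mathcal{O}_K$-basis of $\mathcal{O}_L$. Therefore $\mathfrak{A}_H \cdot (\gamma\delta) = \mathcal{O}_L$ and, since both are free $\mathcal{O}_K$-modules of the same rank $n$ and the action map $\mathfrak{A}_H \to \mathcal{O}_L$, $\alpha \mapsto \alpha \cdot (\gamma\delta)$, sends a basis to a basis, it is an isomorphism of $\mathfrak{A}_H$-modules. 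Hence $\gamma\delta$ is a free generator.

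The main obstacle is purely bookkeeping: one must be careful that the product $(v_i\cdot\gamma)(\mu_j\cdot\delta)$ computed in $L$ coincides with the pure tensor $(v_i\cdot\gamma)\otimes(\mu_j\cdot\delta)$ under the canonical isomorphism $\mathcal{O}_E \otimes_{\mathcal{O}_K} \mathcal{O}_F \cong \mathcal{O}_L$ given by arithmetic disjointness; this is precisely the content of linear disjointness from Theorem \ref{firstmaintheorem}(3) combined with $\mathcal{O}_L = \mathcal{O}_E \otimes_{\mathcal{O}_K} \mathcal{O}_F$. Alternatively, one can give a matricial proof: by the discussion preceding Theorem \ref{teodescassocorder}, $\mathcal{D}_{\gamma\delta}(H,L) = \mathcal{D}_\gamma(H_1,E) \otimes \mathcal{D}_\delta(\overline{H}, F)$ (up to the permutation matrix $P$), and since the Kronecker product of unimodular matrices is unimodular, the freeness criterion of the previous section yields the result.
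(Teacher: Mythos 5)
Your proposal is correct and follows essentially the same route as the paper's proof: pick $\mathcal{O}_K$-bases of $\mathfrak{A}_{H_1}$ and $\mathfrak{A}_{\overline{H}}$, use $\mathfrak{A}_H=\mathfrak{A}_{H_1}\otimes_{\mathcal{O}_K}\mathfrak{A}_{\overline{H}}$ together with $(v_i\mu_j)\cdot(\gamma\delta)=(v_i\cdot\gamma)(\mu_j\cdot\delta)$, and conclude via $\mathcal{O}_L=\mathcal{O}_E\otimes_{\mathcal{O}_K}\mathcal{O}_F$ that the images form an $\mathcal{O}_K$-basis of $\mathcal{O}_L$. The matricial alternative you sketch at the end is a valid reformulation but is not needed.
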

\begin{proof}
Let $\{v_i\}_{i=1}^{r}$ be an $\mathcal{O}_K$-basis of $\mathfrak{A}_{H_1}$ and let $\{\mu_j\}_{j=1}^{u}$ be an $\mathcal{O}_K$-basis of $\mathfrak{A}_{\overline{H}}$. Then, $\{v_i\cdot\gamma\}_{i=1}^{r}$ is an $\mathcal{O}_K$-basis of $\mathcal{O}_E$ and $\{\mu_j\cdot\delta\}_{j=1}^{u}$ is an $\mathcal{O}_K$-basis of $\mathcal{O}_F$. Since $\mathcal{O}_L=\mathcal{O}_E\otimes_{\mathcal{O}_K}\mathcal{O}_F$, the product of these bases is an $\mathcal{O}_K$-basis of $\mathcal{O}_L$. But that basis is formed by the elements $$(v_i\cdot\gamma)(\mu_j\cdot\delta)=(v_i\mu_j)\cdot(\gamma\delta),\,1\leq i\leq r,\,0\leq j\leq u.$$

Since $\mathfrak{A}_H=\mathfrak{A}_{H_1}\otimes_{\mathcal{O}_K}\mathfrak{A}_{\overline{H}}$, this amounts to say that $\gamma\delta$ is a $\mathfrak{A}_H$-free generator of $\mathcal{O}_L$.
\end{proof}

\begin{example} Let us consider once again the situation of Example \ref{exeigQ3'}.  We know by Example \ref{exeigQ3'matrix} that the action of the associated order over $\mathcal{O}_E$ is given by 
\begin{equation}\label{table4bis}
\begin{tabular}{c|ccc} $\mathfrak{A}_{ H_1}$& \begin{tabular}[c]{@{}l@{}}$1$\end{tabular} & 
\begin{tabular}[c]{@{}l@{}}$\alpha$\end{tabular} &
\begin{tabular}[c]{@{}l@{}}$\alpha^2$\end{tabular}
\\ \hline
$w_1$ & $1$ & $\alpha$ & $\alpha^2$ \\[1ex]
$w_2$& $0$& $3+9\alpha+2\alpha^2$ & $-3-30\alpha-9\alpha^2$ \\[1ex]
$w_3'$ & $0$ & $-1$ & $3$ \\
\end{tabular}
\end{equation}
Hence, for an element $\beta=\beta_0+\beta_1\alpha+\beta_2\alpha^2\in\mathcal{O}_E$, we have: $$w_1\cdot\beta=\beta_0+\beta_1\alpha+\beta_2\alpha^2,$$ $$w_2\cdot\beta=3(\beta_1-\beta_2)+(9\beta_1-30\beta_2)\alpha+(2\beta_1-9\beta_2)\alpha^2$$
$$w_3'\cdot\beta=\beta_0-\beta_1+3\beta_2.$$
If we take $\beta=\alpha$, the associated matrix is $$\mathcal{D}_{\alpha}(H_1,E)=\begin{pmatrix}
0 & 3 & -1\\
1 & 9 & 0 \\
0& 2 &  0
\end{pmatrix},$$
which has determinant $-2$. Then, $\alpha$ is a generator of $\mathcal{O}_E$ as $\mathfrak{A}_{H_1}$-module and since $F/\Q$ is quadratic, by Example \ref{quadraticfreeness}, $1+z$ is an $\mathfrak{A}_{\overline{H}}$-generator of $\mathcal{O}_F$. We know by Example \ref{exeigQ3'matrix} that $E/\Q$ and $F/\Q$ are arithmetically disjoint. Then, we can apply Theorem \ref{teoinducedfreeness} to obtain that $\mathcal{O}_L$ is $\mathfrak{A}_H$-free with generator $\alpha(1+z)$.
\end{example}

\subsection{Freeness after tensoring by $\mathcal{O}_F$}

We finally compute the associated order of $\mathcal{O}_L$ in $H_1\otimes_K F$ and discuss the freeness of $\mathcal{O}_L$ in $\mathfrak{A}_{H_1\otimes_KF}$. Note that this is actually a Hopf Galois structure of $L/F$ because $H_1$ is a Hopf Galois structure of $E/K$ and $F$ is $K$-flat. Moreover, the action of $H_1\otimes_K F$ on $L$ is obtained by extending $F$-linearly the one of $H_1$ on $E$.

We study the relation between $\mathfrak{A}_{H_1}$ and $\mathfrak{A}_{H_1\otimes_K F}$, as well as the $\mathfrak{A}_{H_1}$-freeness of $\mathcal{O}_E$ and the $\mathfrak{A}_{H_1\otimes_K F}$-freeness of $\mathcal{O}_L$. In order to do this, we need a suitable description of elements of $\mathcal{O}_L$. For this reason, we make again the hypothesis that $E/K$ and $F/K$ are arithmetically disjoint, which implies that  $\mathcal{O}_L=\mathcal{O}_E\otimes_{\mathcal{O}_K}\mathcal{O}_F$.

Let $\{\alpha_i\}_{i=1}^{r}$ be an $\mathcal{O}_K$-basis of $\mathcal{O}_E$ and let $\{z_j\}_{j=1}^{u}$ be an $\mathcal{O}_K$-basis of $\mathcal{O}_F$. Since $\mathcal{O}_L=\mathcal{O}_E\otimes_{\mathcal{O}_K}\mathcal{O}_F$, $\{\alpha_iz_j\}_{i,j}$ is an $\mathcal{O}_K$-basis of $\mathcal{O}_L$.

\begin{pro} If $E/K$ and $F/K$ are arithmetically disjoint, then $\mathfrak{A}_{H_1\otimes_K F}=\mathfrak{A}_{H_1}\otimes_{\mathcal{O}_K}\mathcal{O}_F$.
\end{pro}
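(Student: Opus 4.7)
The plan is to prove the two inclusions separately, with the non-trivial one handled via the matrix machinery of Section \ref{sectionintegral}.

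For the inclusion $\mathfrak{A}_{H_1}\otimes_{\mathcal{O}_K}\mathcal{O}_F\subseteq\mathfrak{A}_{H_1\otimes_K F}$, I would argue directly. Take $v\in\mathfrak{A}_{H_1}$ and $c\in\mathcal{O}_F$. Any element of $\mathcal{O}_L=\mathcal{O}_E\otimes_{\mathcal{O}_K}\mathcal{O}_F$ (using arithmetic disjointness) can be written as $\sum_{i,j}a_{ij}\alpha_i z_j$ with $a_{ij}\in\mathcal{O}_K$, and
$$(v\otimes c)\cdot\Bigl(\sum_{i,j}a_{ij}\alpha_i z_j\Bigr)=\sum_{i,j}a_{ij}(v\cdot\alpha_i)\otimes(cz_j),$$
which lies in $\mathcal{O}_E\otimes_{\mathcal{O}_K}\mathcal{O}_F=\mathcal{O}_L$ because $v\cdot\alpha_i\in\mathcal{O}_E$ and $cz_j\in\mathcal{O}_F$.

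For the reverse inclusion, the key observation is that when we fix a $K$-basis $\{w_i\}_{i=1}^{r}$ of $H_1$ and the $\mathcal{O}_K$-basis $\{\alpha_k\}_{k=1}^{r}$ of $\mathcal{O}_E$, then $\{w_i\otimes 1\}_{i=1}^{r}$ is an $F$-basis of $H_1\otimes_K F$ and (by arithmetic disjointness) $\{\alpha_k\otimes 1\}_{k=1}^{r}$ is an $\mathcal{O}_F$-basis of $\mathcal{O}_L$. The action $(w_i\otimes 1)\cdot(\alpha_k\otimes 1)=(w_i\cdot\alpha_k)\otimes 1$ has the same coordinates in the $\mathcal{O}_F$-basis of $\mathcal{O}_L$ as $w_i\cdot\alpha_k$ has in the $\mathcal{O}_K$-basis of $\mathcal{O}_E$. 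Therefore the matrices of the action coincide, literally:
$$M(H_1\otimes_K F,L)=M(H_1,E),$$
with entries viewed in $K\subseteq F$.

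Now I would apply the basis-computation method of Theorem \ref{basisassocorder} and the remark following it. If $U\in\mathrm{GL}_{r^2}(\mathcal{O}_K)$ satisfies
$$U\,M(H_1,E)=\begin{pmatrix}D_1\\\hline O\end{pmatrix}$$
with $D_1\in\mathrm{GL}_r(K)$, then the columns of $D_1^{-1}$ provide the coordinates in $\{w_i\}$ of an $\mathcal{O}_K$-basis $\{v_1,\dots,v_r\}$ of $\mathfrak{A}_{H_1}$. Viewing the same $U$ as an element of $\mathrm{GL}_{r^2}(\mathcal{O}_F)$ and using the identification of matrices above, the same reduction gives an $\mathcal{O}_F$-basis of $\mathfrak{A}_{H_1\otimes_K F}$ whose coordinates in $\{w_i\otimes 1\}$ are again the columns of $D_1^{-1}$. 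This basis is therefore $\{v_i\otimes 1\}_{i=1}^{r}$, which is by definition an $\mathcal{O}_F$-basis of the $\mathcal{O}_F$-submodule $\mathfrak{A}_{H_1}\otimes_{\mathcal{O}_K}\mathcal{O}_F$ of $H_1\otimes_K F$. Equality of the two $\mathcal{O}_F$-modules follows.

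The only genuine subtlety — and the point where arithmetic disjointness is essential — is the identification of the two action matrices: without $\mathcal{O}_L=\mathcal{O}_E\otimes_{\mathcal{O}_K}\mathcal{O}_F$ one could not take an $\mathcal{O}_K$-basis of $\mathcal{O}_E$ as an $\mathcal{O}_F$-basis of $\mathcal{O}_L$, and the matrix $M(H_1\otimes_K F,L)$ would not simply be $M(H_1,E)$. Everything else reduces to re-reading the proof of Theorem \ref{basisassocorder} with $\mathcal{O}_F$ in place of $\mathcal{O}_K$.
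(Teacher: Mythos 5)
Your proof is correct, but your treatment of the hard inclusion is genuinely different from the paper's. The paper argues directly and without matrices: given $h\in\mathfrak{A}_{H_1\otimes_K F}$, it writes $h=\sum_{j}h^{(j)}z_j$ with $h^{(j)}\in H_1$ (using that $\{z_j\}$ is an $H_1$-basis of $H_1\otimes_K F$), lets $h$ act on an arbitrary $\gamma\in\mathcal{O}_E\subset\mathcal{O}_L$ to get $h\cdot\gamma=\sum_{j}(h^{(j)}\cdot\gamma)z_j\in\mathcal{O}_L$, and then uses that $\{z_j\}$ is an $\mathcal{O}_E$-basis of $\mathcal{O}_L$ (this is where arithmetic disjointness enters for them) to extract $h^{(j)}\cdot\gamma\in\mathcal{O}_E$, i.e.\ $h^{(j)}\in\mathfrak{A}_{H_1}$. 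Your route instead base-changes the whole Section \ref{sectionintegral} computation from $\mathcal{O}_K$ to $\mathcal{O}_F$: the identification $M(H_1\otimes_K F,L)=M(H_1,E)$ is correct (it is precisely the statement that $\{\alpha_k\otimes 1\}$ is an $\mathcal{O}_F$-basis of $\mathcal{O}_L$ on which $w_i\otimes 1$ acts with the same $K$-rational coordinates as $w_i$ on $\{\alpha_k\}$), and reusing the unimodular $U$ is legitimate since $\GL_{r^2}(\mathcal{O}_K)\subset\GL_{r^2}(\mathcal{O}_F)$ --- in particular you never need $\mathcal{O}_F$ to be a principal ideal domain, because you inherit $U$ from the computation over $\mathcal{O}_K$ rather than invoking the Hermite normal form over $\mathcal{O}_F$. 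What your version buys is an explicit $\mathcal{O}_F$-basis $\{v_i\otimes 1\}_{i=1}^{r}$ of $\mathfrak{A}_{H_1\otimes_K F}$ and equality of the two $\mathcal{O}_F$-modules in a single stroke (which in fact makes your separate first-inclusion paragraph redundant); it also runs exactly parallel to how the paper proves $\mathfrak{A}_H=\mathfrak{A}_{H_1}\otimes_{\mathcal{O}_K}\mathfrak{A}_{\overline{H}}$ in Theorem \ref{teodescassocorder}. What the paper's version buys is brevity and independence from any choice of bases or of $U$.
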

\begin{proof}
Let $H^{(1)}=H_1\otimes_K F$. First, we prove that $\mathfrak{A}_{H_1}\otimes_{\mathcal{O}_K}\mathcal{O}_F\subset\mathfrak{A}_{H^{(1)}}$. It is clearly contained in $H_1\otimes_{\mathcal{O}_K}F=H_1\otimes_K F=H^{(1)}$. On the other hand, it acts $\mathcal{O}_K$-linearly on $\mathcal{O}_L$ componentwise since $\mathcal{O}_L=\mathcal{O}_E\otimes_{\mathcal{O}_K}\mathcal{O}_F$. This proves the claim. 

For the reverse inclusion, let $h\in\mathfrak{A}_{H^{(1)}}$. Trivially, $h\in H^{(1)}=H_1\otimes_KF$. Since $\{z_j\}_{j=1}^{u}$ is a $K$-basis of $F$ and $H_1$ is $K$-flat, it is also an $H_1$-basis of $H^{(1)}$. Then, $$h=\sum_{j=1}^{u}h^{(j)}z_j,\,h^{(j)}\in H_1.$$ 
The result will follow from the fact that $h^{(j)}\in\mathfrak{A}_{H_1}$ for all $1\leq j\leq u$. In order to prove this, we may check that $h^{(j)}\cdot\gamma\in\mathcal{O}_E$ for all $\gamma\in\mathcal{O}_E$. Take any such $\gamma\in\mathcal{O}_E$. In particular $\gamma\in\mathcal{O}_L$, and since $h\in\mathfrak{A}_{H^{(1)}}$, we have that $h\cdot_L\gamma\in\mathcal{O}_L$. But $$h\cdot_L\gamma=\left(\sum_{j=1}^{u}h^{(j)}z_j\right)\cdot_L\gamma=\sum_{j=1}^{u}(h^{(j)}\cdot_E\gamma)z_j\in\mathcal{O}_L.$$ Now, $\{z_j\}_{j=1}^{u}$ is an $\mathcal{O}_E$-basis of $\mathcal{O}_L$ because $\mathcal{O}_E$ is $\mathcal{O}_K$-flat. Hence, the previous expression yields that $h^{(j)}\cdot_E\gamma\in\mathcal{O}_E$ for all $1\leq j\leq u$.
\end{proof}

With this result, we have determined how the associated order changes when we tensor with an arithmetically disjoint extension. Now, we move to the question of the freeness of $\mathcal{O}_L$ as $\mathfrak{A}_{H_1\otimes_K F}$-module. We will prove $2$ of Theorem \ref{thirdmaintheorem}.

\begin{coro} Assume that $E/K$ and $F/K$ are arithmetically disjoint. If $\mathcal{O}_E$ is $\mathfrak{A}_{H_1}$-free, then $\mathcal{O}_L$ is $\mathfrak{A}_{H_1\otimes_K F}$-free.
\end{coro}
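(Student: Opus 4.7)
The plan is to show directly that any $\mathfrak{A}_{H_1}$-free generator $\gamma$ of $\mathcal{O}_E$ is also an $\mathfrak{A}_{H_1\otimes_K F}$-free generator of $\mathcal{O}_L$; concretely, I would show that $\gamma\otimes 1$ serves as a free generator, which is the natural counterpart of Theorem \ref{teoinducedfreeness} with $\delta=1$ and $F/K$ replacing the inducing subextension on the second factor. The argument follows the same template as the proof of Theorem \ref{teoinducedfreeness} but uses the preceding proposition $\mathfrak{A}_{H_1\otimes_K F}=\mathfrak{A}_{H_1}\otimes_{\mathcal{O}_K}\mathcal{O}_F$ in place of the tensor decomposition of the full associated order.

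First I would fix an $\mathcal{O}_K$-basis $\{v_i\}_{i=1}^{r}$ of $\mathfrak{A}_{H_1}$ and an $\mathcal{O}_K$-basis $\{z_j\}_{j=1}^{u}$ of $\mathcal{O}_F$. By the $\mathfrak{A}_{H_1}$-freeness of $\mathcal{O}_E$, the set $\{v_i\cdot\gamma\}_{i=1}^{r}$ is an $\mathcal{O}_K$-basis of $\mathcal{O}_E$. By the arithmetic disjointness hypothesis $\mathcal{O}_L=\mathcal{O}_E\otimes_{\mathcal{O}_K}\mathcal{O}_F$, so the products $\{(v_i\cdot\gamma)\,z_j\}_{i,j}$ form an $\mathcal{O}_K$-basis of $\mathcal{O}_L$. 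On the other hand, by the preceding proposition the elements $\{v_i\otimes z_j\}_{i,j}$ form an $\mathcal{O}_K$-basis of $\mathfrak{A}_{H_1\otimes_K F}$.

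The next step is to observe that the action of $H_1\otimes_K F$ on $L$ is obtained by extending $F$-linearly the action of $H_1$ on $E$, so that $(v_i\otimes z_j)\cdot\gamma=(v_i\cdot\gamma)\,z_j$. Combining this with the previous step, the images $(v_i\otimes z_j)\cdot\gamma$ exhaust an $\mathcal{O}_K$-basis of $\mathcal{O}_L$, which by standard linear algebra (as already used in Section \ref{secfreeness} and Theorem \ref{teoinducedfreeness}) is equivalent to $\gamma$ being an $\mathfrak{A}_{H_1\otimes_K F}$-free generator of $\mathcal{O}_L$.

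Given that both the associated-order decomposition and the description of $\mathcal{O}_L$ as a tensor product are already in hand, I expect no essential obstacle; the only point worth verifying carefully is the compatibility of the extended Hopf action with the tensor factorization, i.e.\ the identity $(v\otimes z)\cdot(\gamma\otimes 1)=(v\cdot\gamma)\otimes z$, which follows from the $F$-linear extension of the $H_1$-action on $E$ to an $H_1\otimes_K F$-action on $L=E\otimes_K F$.
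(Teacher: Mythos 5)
Your proposal is correct and follows essentially the same route as the paper: both arguments hinge on the identification $\mathfrak{A}_{H_1\otimes_K F}=\mathfrak{A}_{H_1}\otimes_{\mathcal{O}_K}\mathcal{O}_F$ from the preceding proposition together with $\mathcal{O}_L=\mathcal{O}_E\otimes_{\mathcal{O}_K}\mathcal{O}_F$; the paper simply invokes $\mathcal{O}_K$-flatness of $\mathcal{O}_F$ to conclude that the base change of a free module is free, whereas you unwind that into an explicit basis computation. Your version has the small added benefit of exhibiting the free generator $\gamma\otimes 1$ explicitly, in the spirit of Theorem \ref{teoinducedfreeness}.
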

\begin{proof}
Since $\mathcal{O}_F$ is $\mathcal{O}_K$-flat, $\mathcal{O}_E\otimes_{\mathcal{O}_K}\mathcal{O}_F=\mathcal{O}_L$ is $\mathfrak{A}_{H_1}\otimes_{\mathcal{O}_K}\mathcal{O}_F$-free. By the previous result, $\mathfrak{A}_{H_1}\otimes_{\mathcal{O}_K}\mathcal{O}_F=\mathfrak{A}_{H_1\otimes_K F}$ and the claim follows.
\end{proof}

\Addresses

\end{document}